\DeclareMathOperator{\Res}{Res}
\DeclareMathOperator{\Hom}{Hom}
\DeclareMathOperator{\Sing}{Sing}
\DeclareMathOperator{\End}{End}
\begin{document}

%--------------<Theorem Style Head>--------------
\newtheorem{thm}{Theorem}[section]
\newtheorem{prop}[thm]{Proposition}
\newtheorem{coro}[thm]{Corollary}
\newtheorem{conj}[thm]{Conjecture}
\newtheorem{example}[thm]{Example}
\newtheorem{lem}[thm]{Lemma}
\newtheorem{rem}[thm]{Remark}
\newtheorem{hy}[thm]{Hypothesis}
\newtheorem*{acks}{Acknowledgements}
\theoremstyle{definition}
\newtheorem{de}[thm]{Definition}
\newtheorem{ex}[thm]{Example}

\newtheorem{convention}[thm]{Convention}

\newtheorem{bfproof}[thm]{{\bf Proof}}
\xymatrixcolsep{5pc}
%--------------<\Theorem Style Head>-------------

%--------------<Common Sets>---------------------
\newcommand{\C}{{\mathbb{C}}}
\newcommand{\Z}{{\mathbb{Z}}}
\newcommand{\N}{{\mathbb{N}}}
\newcommand{\Q}{{\mathbb{Q}}}
\newcommand{\te}[1]{\textnormal{{#1}}}
\newcommand{\set}[2]{{
    \left.\left\{
        {#1}
    \,\right|\,
        {#2}
    \right\}
}}
\newcommand{\sett}[2]{{
    \left\{
        {#1}
    \,\left|\,
        {#2}
    \right\}\right.
}}

\newcommand{\choice}[2]{{
\left[
\begin{array}{c}
{#1}\\{#2}
\end{array}
\right]
}}
\def \<{{\langle}}
\def \>{{\rangle}}

\def\({\left(}
\def\){\right)}

\newcommand{\overit}[2]{{
    \mathop{{#1}}\limits^{{#2}}
}}
\newcommand{\belowit}[2]{{
    \mathop{{#1}}\limits_{{#2}}
}}

\newcommand{\wt}[1]{\widetilde{#1}}

\newcommand{\wh}[1]{\widehat{#1}}

\newcommand{\no}[1]{{
    \mathopen{\overset{\circ}{
    \mathsmaller{\mathsmaller{\circ}}}
    }{#1}\mathclose{\overset{\circ}{\mathsmaller{\mathsmaller{\circ}}}}
}}

\newcommand{\nob}[1]{{
    \mathopen{\overset{\bullet}{
    \mathsmaller{\mathsmaller{\bullet}}}
    }{#1}\mathclose{\overset{\bullet}{\mathsmaller{\mathsmaller{\bullet}}}}
}}

\newlength{\dhatheight}
\newcommand{\dwidehat}[1]{%
    \settoheight{\dhatheight}{\ensuremath{\widehat{#1}}}%
    \addtolength{\dhatheight}{-0.45ex}%
    \widehat{\vphantom{\rule{1pt}{\dhatheight}}%
    \smash{\widehat{#1}}}}
\newcommand{\dhat}[1]{%
    \settoheight{\dhatheight}{\ensuremath{\hat{#1}}}%
    \addtolength{\dhatheight}{-0.35ex}%
    \hat{\vphantom{\rule{1pt}{\dhatheight}}%
    \smash{\hat{#1}}}}

\newcommand{\dwh}[1]{\dwidehat{#1}}

\newcommand{\ck}[1]{\check{#1}}

\newcommand{\dis}{\displaystyle}

\newcommand{\pd}[1]{\frac{\partial}{\partial {#1}}}

\newcommand{\pdiff}[2]{\frac{\partial^{#2}}{\partial #1^{#2}}}

%--------------<\Common Sets>--------------------

%--------------<Global>--------------------------
\newcommand{\g}{{\frak g}}
\newcommand{\fg}{\g}
\newcommand{\ff}{{\frak f}}
\newcommand{\f}{\ff}
\newcommand{\gc}{{\bar{\g'}}}
\newcommand{\h}{{\frak h}}
\newcommand{\cent}{{\frak c}}
\newcommand{\notc}{{\not c}}
\newcommand{\Loop}{{\mathcal L}}
\newcommand{\G}{{\mathcal G}}
\newcommand{\D}{\mathcal D}
\newcommand{\T}{\mathcal T}
\newcommand{\Free}{\mathcal F}
\newcommand{\Cfk}{\mathcal C}
\newcommand{\nil}{\mathfrak n}
\newcommand{\al}{\alpha}
\newcommand{\alck}{\al^\vee}
\newcommand{\be}{\beta}
\newcommand{\beck}{\be^\vee}
\newcommand{\ssl}{{\mathfrak{sl}}}
\newcommand{\id}{\te{id}}
\newcommand{\rtu}{{\xi}}
\newcommand{\period}{{N}}
\newcommand{\half}{{\frac{1}{2}}}
\newcommand{\quar}{{\frac{1}{4}}}
\newcommand{\oct}{{\frac{1}{8}}}
\newcommand{\hex}{{\frac{1}{16}}}
\newcommand{\reciprocal}[1]{{\frac{1}{#1}}}
\newcommand{\inverse}{^{-1}}
\newcommand{\inv}{\inverse}
\newcommand{\SumInZm}[2]{\sum\limits_{{#1}\in\Z_{#2}}}
\newcommand{\uce}{{\mathfrak{uce}}}
\newcommand{\Rcat}{\mathcal R}

%--------------<\Global>-------------------------

%--------------<Local>---------------------------
\newcommand{\orb}[1]{|\mathcal{O}({#1})|}
\newcommand{\up}{_{(p)}}
\newcommand{\uq}{_{(q)}}
\newcommand{\upq}{_{(p+q)}}
\newcommand{\uz}{_{(0)}}
\newcommand{\uk}{_{(k)}}
\newcommand{\nsum}{\SumInZm{n}{\period}}
\newcommand{\ksum}{\SumInZm{k}{\period}}
\newcommand{\overN}{\reciprocal{\period}}
\newcommand{\df}{\delta\left( \frac{\xi^{k}w}{z} \right)}
\newcommand{\dfl}{\delta\left( \frac{\xi^{\ell}w}{z} \right)}
\newcommand{\ddf}{\left(D\delta\right)\left( \frac{\xi^{k}w}{z} \right)}

\newcommand{\ldfn}[1]{{\left( \frac{1+\xi^{#1}w/z}{1-{\xi^{#1}w}/{z}} \right)}}
\newcommand{\rdfn}[1]{{\left( \frac{{\xi^{#1}w}/{z}+1}{{\xi^{#1}w}/{z}-1} \right)}}
\newcommand{\ldf}{{\ldfn{k}}}
\newcommand{\rdf}{{\rdfn{k}}}
\newcommand{\ldfl}{{\ldfn{\ell}}}
\newcommand{\rdfl}{{\rdfn{\ell}}}

\newcommand{\kprod}{{\prod\limits_{k\in\Z_N}}}
\newcommand{\lprod}{{\prod\limits_{\ell\in\Z_N}}}
\newcommand{\E}{{\mathcal{E}}}
\newcommand{\F}{{\mathcal{F}}}

\newcommand{\SY}{{\mathcal{S}}}
\newcommand{\Etopo}{{\mathcal{E}_{\te{topo}}}}

\newcommand{\Ye}{{\mathcal{Y}_\E}}

\newcommand{\rh}{{{\bf h}}}
\newcommand{\rp}{{{\bf p}}}
\newcommand{\rrho}{{{\pmb \varrho}}}
\newcommand{\ral}{{{\pmb \al}}}

%% ----------------<functors>--------------------
\newcommand{\comp}{{\mathfrak{comp}}}
\newcommand{\ctimes}{{\widehat{\boxtimes}}}
\newcommand{\ptimes}{{\widehat{\otimes}}}
\newcommand{\ptimeslt}{{
%   \leftidx{ _{\te{tri}}}{\ctimes}{}
{}_{\te{t}}\ptimes
}}
\newcommand{\ptimesrt}{{\ot_{\te{t}} }}
\newcommand{\ttp}[1]{{
    {}_{{#1}}\ptimes
}}
\newcommand{\bigptimes}{{\widehat{\bigotimes}}}
\newcommand{\bigptimeslt}{{
%   \leftidx{ _{\te{tri}}}{\ctimes}{}
{}_{\te{t}}\bigptimes
}}
\newcommand{\bigptimesrt}{{\bigptimes_{\te{t}} }}
\newcommand{\bigttp}[1]{{
    {}_{{#1}}\bigptimes
}}

\newcommand{\ot}{\otimes}
\newcommand{\Ot}{\bigotimes}

\newcommand{\affva}[1]{V_{\wh\g}\(#1,0\)}
\newcommand{\saffva}[1]{L_{\wh\g}\(#1,0\)}
\newcommand{\saffmod}[1]{L_{\wh\g}\(#1\)}

\newcommand{\otcopies}[2]{\belowit{\underbrace{{#1}\ot \cdots \ot {#1}}}{{#2}\te{-times}}}

\newcommand{\twotcopies}[3]{\belowit{\underbrace{{#1}\wh\ot_{#2} \cdots \wh\ot_{#2} {#1}}}{{#3}\te{-times}}}

%% ----------------<\functors>-------------------

%% ----------------<algebras>--------------------
\newcommand{\tar}{{\mathcal{DY}}_0\(\mathfrak{gl}_{\ell+1}\)}
\newcommand{\U}{{\mathcal{U}}}
\newcommand{\V}{{\mathcal{V}}}
\newcommand{\Y}{{\mathcal{Y}}}

\newcommand{\htar}{\mathcal{DY}_\hbar\(A\)}
\newcommand{\hhtar}{\widetilde{\mathcal{DY}}_\hbar\(A\)}
\newcommand{\htarz}{\mathcal{DY}_0\(\mathfrak{gl}_{\ell+1}\)}
\newcommand{\hhtarz}{\widetilde{\mathcal{DY}}_0\(A\)}
\newcommand{\qhei}{\U_\hbar\left(\hat{\h}\right)}
\newcommand{\n}{{\mathfrak{n}}}
\newcommand{\vac}{{{\bf 1}}}
\newcommand{\vtar}{{{
    \mathcal{V}_{\hbar,\tau}\left(\ell,0\right)
}}}

\newcommand{\qtar}{
    \U_q\(\wh\g_\mu\)}
\newcommand{\rk}{{\bf k}}
% ----------------<\algebras>-------------------

\newcommand{\hctvs}[1]{Hausdorff complete linear topological vector space}
\newcommand{\hcta}[1]{Hausdorff complete linear topological algebra}
\newcommand{\ons}[1]{open neighborhood system}
\newcommand{\B}{\mathcal{B}}
\newcommand{\rx}{{\bf x}}
\newcommand{\re}{{\bf e}}
\newcommand{\bk}{{\bf k}}
\newcommand{\rphi}{{\boldsymbol{ \phi}}}

\newcommand{\der}{\mathcal D}

%--------------<\Local>--------------------------

\makeatletter
\renewcommand{\BibLabel}{%
    %\hfill
    \Hy@raisedlink{\hyper@anchorstart{cite.\CurrentBib}\hyper@anchorend}%
    [\thebib]%
}
\@addtoreset{equation}{section}
\def\theequation{\thesection.\arabic{equation}}
\makeatother \makeatletter

%%%%%%%%%%%%%%%%%%%%%%%%%%%%%%%%%%%%%%%%%%%%%%%%%%%%%%

\title[Quantum affine VA]{Double Yangians and lattice quantum vertex algebras}

\author{Fei Kong$^1$}
\address{Key Laboratory of Computing and Stochastic Mathematics (Ministry of Education), School of Mathematics and Statistics, Hunan Normal University, Changsha, China 410081} \email{kongmath@hunnu.edu.cn}
\thanks{$^1$Partially supported by NSF of China (No. 12371027, No. 12471029).}

\author{Haisheng Li}
\address{Department of Mathematical Sciences, Rutgers University, Camden, NJ 08102}
\email{hli@camden.rutgers.edu}

\subjclass[2010]{17B69}
\keywords{Quantum vertex algebra, affine vertex algebra, centrally extended double Yangian}

\begin{abstract}
For any simply-laced GCM $A$,  a $\C[[\hbar]]$-algebra $\wh{\mathcal{DY}}(A)$ was introduced in \cite{KL1},
where it was proved that the universal vacuum $\wh{\mathcal{DY}}(A)$-module ${\mathcal{V}}_A(\ell)$ for any fixed level $\ell$
 is naturally an $\hbar$-adic weak quantum vertex algebra.
Let $L$ be the root lattice of $\g(A)$. As the main results of this paper,
we construct an $\hbar$-adic quantum vertex algebra $V_L[[\hbar]]^{\eta}$
as a formal deformation of the lattice vertex algebra $V_L$ and show that every $V_L[[\hbar]]^{\eta}$-module is naturally
a restricted $\wh{\mathcal{DY}}(A)$-module of level one.
For $A$ of finite type, we obtain a realization of $V_L[[\hbar]]^{\eta}$ as a quotient of
the $\hbar$-adic weak quantum vertex algebra ${\mathcal{V}}_A(1)$, giving a characterization of $V_L[[\hbar]]^{\eta}$-modules as
 restricted $\wh{\mathcal{DY}}(A)$-modules of level one.
\end{abstract}
\maketitle

\section{Introduction}
This is a sequel to \cite{KL1} on centrally extended double Yangians and $\hbar$-adic quantum vertex algebras.
For any simply-laced generalized Cartan matrix $A$,
 an associative $\C[[\hbar]]$-algebra $\wh{\mathcal{DY}}(A)$ was introduced in \cite{KL1},
which coincides with the centrally extended double Yangian associated to the simple Lie algebra $\g:=\g(A)$
when $A$ is of finite type. Furthermore, a new current presentation of $\wh{\mathcal{DY}}(A)$ was given
and for any $\ell\in \C$, a universal vacuum $\wh{\mathcal{DY}}(A)$-module
${\mathcal{V}}_A(\ell)$ of level $\ell$ was constructed.
Among the main results,  it was proved that
 ${\mathcal{V}}_A(\ell)$ has a natural $\hbar$-adic weak quantum vertex algebra structure and
a canonical isomorphism between the category of ${\mathcal{V}}_A(\ell)$-modules and
the category of restricted $\wh{\mathcal{DY}}(A)$-modules of level $\ell$ was obtained.

In this current paper, we continue to study the level-one case.
Let $L$ be the root lattice of the Kac-Moody Lie algebra $\g(A)$.
Using a result of \cite{JKLT-22}, we construct an $\hbar$-adic quantum vertex algebra $V_L[[\hbar]]^\eta$
 as a formal deformation of the lattice vertex algebra $V_L$.
 Among the main results, we prove that every $V_L[[\hbar]]^\eta$-module is naturally
 a restricted $\wh{\mathcal{DY}}(A)$-module of level one and $V_L[[\hbar]]^\eta$ is isomorphic to a quotient
 of the $\hbar$-adic weak quantum vertex algebra ${\mathcal{V}}_A(1)$.
 Assuming that $A$ is of finite type, we obtain a more precise description of $V_L[[\hbar]]^\eta$ as a quotient
 of ${\mathcal{V}}_A(1)$ and give a characterization of $V_L[[\hbar]]^\eta$-modules in terms of restricted
 $\wh{\mathcal{DY}}(A)$-modules of level one.

Now, we continue the introduction to mention some of the main technical issues.
Let $L$ be a finite-rank {\em non-degenerate} even lattice. Associated to $L$, we have a vertex algebra $V_L$
(see \cite{Bor}, \cite{FLM}).  In the construction of $V_L$, a key ingredient is the vector space $\h:=\C\otimes_{\Z}L$.
 Let $\eta(\cdot,x):  \h\rightarrow \h\otimes \C((x))[[\hbar]]$ be any linear map with $\eta(\cdot,x)\equiv 0\mod \hbar$.
Then an $\hbar$-adic quantum vertex algebra $V_L[[\hbar]]^{\eta}$ was obtained in \cite{JKLT-22} as a formal deformation of $V_L$.
The situation for the current work is slightly different:
We start with a simply-laced GCM $A$ and then we have a Kac-Moody Lie algebra $\g(A)$
with $\h$ denoting the canonical Cartan subalgebra.  All the roots  of $(\g(A),\h)$ give us an even lattice $L$.
In general, $L$ is possibly degenerate with $\h\ne \C\otimes_{\Z}L$.
Due to this, we need to modify the construction of $V_L$ and the treatment of \cite{JKLT-22}.

As for the construction of $V_L$, we use the slight generalization of $V_L$ in \cite{LX} (cf. \cite{LW}).
Let $\h$ be any finite-dimensional vector space equipped with a non-degenerate symmetric bilinear form $\<\cdot,\cdot\>$
and let $L\subset \h$ be any (possibly degenerate) even lattice.
Using the same construction (of $V_L$ in \cite{FLM}) we get a vertex algebra denoted by $V_{(\h,L)}$.
(In case $\h=\C\otimes_{\Z}L$, we have $V_{(\h,L)}=V_L$.)
Then define $V_L$ to be the vertex subalgebra generated by the twisted group algebra $\C_{\varepsilon}[L]$.
For any linear map $\eta(\cdot,x):  \h\rightarrow \h\otimes \C((x))[[\hbar]]$ with $\eta(\cdot,x)\equiv 0\mod \hbar$,
slightly modifying the construction of \cite{JKLT-22} we get an $\hbar$-adic quantum vertex algebra
$V_{(\h,L)}[[\hbar]]^{\eta}$ as a formal deformation of $V_{(\h,L)}$.
A fact is that the $\C[[\hbar]]$-submodule $V_L[[\hbar]]$ of $V_{(\h,L)}[[\hbar]]^{\eta}$ is
 an $\hbar$-adic quantum vertex subalgebra, which we denote by $V_L[[\hbar]]^{\eta}$.
 To realize the algebra $\wh{\mathcal{DY}}(A)$ on $V_L[[\hbar]]^{\eta}$-modules, we introduce a particular linear map
 $\eta_\Y(\cdot,x): \h_0\rightarrow \h\otimes \C((x))[[\hbar]]$, where $\h_0:=\C\otimes_{\Z}L$ is a subspace of $\h$, and
 then take $\eta(\cdot,x)$ to be any extension of $\eta_\Y(\cdot,x)$ with domain $\h$.

Note that among the defining relations of the algebra $\wh{\mathcal{DY}}(A)$ are
 the vertex-operator Serre relations and  the ``nonlinear''  $\SY$-commutator relations.
To establish the structure of a restricted $\wh{\mathcal{DY}}(A)$-module on $V_L[[\hbar]]^{\eta_\Y}$-modules,
we make use of the vertex-operator iterate formula and an equivalent version of the Serre relation,
which were obtained in \cite{KL1}.

This paper is organized as follows.
In Section 2, we recall some basics of $\hbar$-adic (weak) quantum vertex algebras and modules.
In Section 3, we recall the algebra $\wh{\mathcal{DY}}(A)$ and its universal vacuum module of level $\ell$.
Section 4 is the core, in which we introduce a particular lattice quantum vertex algebra $V_L[[\hbar]]^{\eta_\Y}$
and give the connection of $V_L[[\hbar]]^{\eta_\Y}$-modules with restricted $\wh{\mathcal{DY}}(A)$-modules of level one.

In this paper, we work on $\C$ (complex numbers) and
we use the formal variable notations and conventions as established in \cite{FLM} and \cite{FHL}.
In addition, we use $\Z_+$ and $\N$ for the sets of positive integers and nonnegative integers, respectively.
For a variable $z$, we also use $\partial_{z}$ for the formal differential operator $\frac{\partial}{\partial z}$.
For any vector space $W$, $W((x))$ and $W((x,y))$ denote the spaces of lower truncated integer power series.
On the other hand, $\C(x)$ and $\C(x,y)$ denote the fields of rational functions.
We often use the $\iota$-map $\iota_{x,y}: \C(x,y)\rightarrow \C((x))((y))$
which is the canonical extension of the embedding of the polynomial ring $\C[x,y]$ into the field $\C((x))((y))$.

\section{$\hbar$-adic quantum vertex algebras  and their modules}

In this section, we recall some basic notions and results on $\hbar$-adic nonlocal vertex algebras,
$\hbar$-adic (weak) quantum vertex algebras, and their modules.

\subsection{Topologically free $\C[[\hbar]]$-modules}
Let $\hbar$ be a formal variable.
A $\C[[\hbar]]$-module $V$ is \emph{topologically free} if $V=V_0[[\hbar]]$
for some $\C$-subspace $V_0$ of $V$.
Let $U$ and $V$ be topologically free $\C[[\hbar]]$-modules.
The $\hbar$-adically completed tensor product $U\wh\ot V$ of $U$ and $V$
is defined to be the $\hbar$-adic completion of $U\otimes V$.
If $U=U_0[[\hbar]]$ and $V=V_0[[\hbar]]$, then $U\wh\ot V=(U_0\ot V_0)[[\hbar]]$.

The following basic facts can be either found (cf. \cite{Ka}) or proved straightforwardly:

\begin{lem}\label{tpfree-basics}
(1)  A $\C[[\hbar]]$-module is topologically free
if and only if it is torsion-free, separated, and $\hbar$-adically complete.
(2) If $W$ is a topologically free $\C[[\hbar]]$-module, then $W=U[[\hbar]]$
for any $\C$-subspace $U$ of $W$ such that $W=U\oplus \hbar W$ over $\C$.
(3) If $U$ is a $\C[[\hbar]]$-submodule of a torsion-free and separated $\C[[\hbar]]$-module
and if $U$ itself is $\hbar$-adically complete, then $U$ is topologically free.
\end{lem}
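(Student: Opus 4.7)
The plan is to handle the three parts separately; part (1) is the substantive one and parts (2) and (3) reduce to it together with short direct arguments.

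For part (1), the forward implication is immediate: if $V=V_0[[\hbar]]$, then a nonzero formal series has a leading term that cannot be killed by multiplication by a power of $\hbar$ (torsion-free), the $\hbar$-adic filtration corresponds to order of vanishing and so has trivial intersection (separated), and $V_0[[\hbar]]$ is by construction $\hbar$-adically complete. For the reverse implication I would pick any $\C$-linear section $s\colon V/\hbar V\to V$ of the quotient map and set $V_0:=s(V/\hbar V)$, so that $V=V_0\oplus \hbar V$ as $\C$-vector spaces. The claim is then that the natural map $\phi\colon V_0[[\hbar]]\to V$, sending $\sum_n v_n\hbar^n$ to the same sum computed in $V$ (which converges since $V$ is complete), is a $\C[[\hbar]]$-module isomorphism. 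For surjectivity, given $v\in V$, iteratively use the decomposition $V=V_0+\hbar V$ to write $v\equiv\sum_{k=0}^{n-1}v_k\hbar^k\pmod{\hbar^n V}$; separatedness and completeness then assemble a preimage. For injectivity, if $\sum_n v_n\hbar^n=0$ with not all $v_n$ zero, picking the minimal $k$ with $v_k\ne 0$ and dividing by $\hbar^k$ (allowed by torsion-freeness) then reducing modulo $\hbar$ contradicts the injectivity of $V_0\to V/\hbar V$.

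For part (2), the proof essentially reuses the surjectivity argument above. Given a $\C$-subspace $U\subseteq W$ with $W=U\oplus\hbar W$, any $w\in W$ decomposes as $w=u_0+\hbar w_1$ with $u_0\in U$; iterating produces $w_n=u_n+\hbar w_{n+1}$ and thus $w\equiv\sum_{k=0}^{n-1}u_k\hbar^k\pmod{\hbar^n W}$ for every $n$. Since $W$ is topologically free, it is complete and separated, so the series $\sum_{k\ge 0}u_k\hbar^k$ converges uniquely to $w$, identifying $W$ with $U[[\hbar]]$. Injectivity of this identification again follows from torsion-freeness and separatedness of $W$.

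For part (3), by part (1) it suffices to check that $U$ is torsion-free and separated (completeness is already assumed). Being a submodule of a torsion-free $\C[[\hbar]]$-module, $U$ is torsion-free. For separatedness, let $V$ denote the ambient torsion-free separated module; then $\bigcap_{n\ge 0}\hbar^n U\subseteq\bigcap_{n\ge 0}\hbar^n V=0$, so $\bigcap_{n\ge 0}\hbar^n U=0$ as required.

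There is no deep obstacle here; these are standard facts about $\hbar$-adic completion of $\C[[\hbar]]$-modules. The only point requiring care is, in the reverse direction of part (1), that all three hypotheses genuinely participate: completeness makes the infinite series meaningful, separatedness forces the constructed series to equal the target element, and torsion-freeness guarantees uniqueness of coefficients.
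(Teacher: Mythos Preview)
Your proof is correct and is exactly the standard direct argument. The paper does not actually prove this lemma; it merely states that these facts ``can be either found (cf.\ \cite{Ka}) or proved straightforwardly,'' so there is nothing to compare against beyond noting that your argument is the straightforward one the authors had in mind.
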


The following result can be found in \cite{Ka}:

\begin{lem}\label{free-top}
Let $W_1,W_2$ be topologically free $\C[[\hbar]]$-modules and let
$\psi:W_1\rightarrow W_2$ be a continuous $\C[[\hbar]]$-module map.
 If the derived $\C$-linear map $\bar{\psi}: W_1/\hbar W_1\rightarrow W_2/\hbar W_2$ is one-to-one (resp. onto),
 then $\psi$ is one-to-one (resp. onto).
\end{lem}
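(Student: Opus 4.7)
The plan is to treat injectivity and surjectivity separately, in each case lifting the statement about $\bar\psi$ on $W_i/\hbar W_i$ to a statement about $\psi$ using the three defining features of topological freeness guaranteed by Lemma \ref{tpfree-basics}(1): torsion-freeness, separatedness, and $\hbar$-adic completeness.

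For the injectivity part, suppose $\bar\psi$ is one-to-one and let $x\in W_1$ with $\psi(x)=0$. Since $W_1$ is separated, $\bigcap_{n\ge 0}\hbar^n W_1=0$, so it suffices to prove by induction that $x\in \hbar^n W_1$ for every $n$. For $n=1$, the hypothesis gives $\bar\psi(x+\hbar W_1)=\psi(x)+\hbar W_2=0$, hence $x\in \hbar W_1$ by injectivity of $\bar\psi$. For the inductive step, writing $x=\hbar^n y$ with $y\in W_1$, continuity of $\psi$ (which is automatic for $\C[[\hbar]]$-linear maps) gives $\hbar^n\psi(y)=0$, and since $W_2$ is torsion-free we deduce $\psi(y)=0$; applying the $n=1$ case to $y$ then yields $y\in \hbar W_1$, i.e., $x\in \hbar^{n+1}W_1$.

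For the surjectivity part, suppose $\bar\psi$ is onto and fix $w\in W_2$. The plan is to build a preimage by successive approximation modulo increasing powers of $\hbar$. Using surjectivity of $\bar\psi$, pick $v_0\in W_1$ with $w-\psi(v_0)\in \hbar W_2$, and write $w-\psi(v_0)=\hbar w_1$ with $w_1\in W_2$ (well-defined since $W_2$ is torsion-free). Iterating, at step $n$ we choose $v_n\in W_1$ such that $w_n-\psi(v_n)\in \hbar W_2$, and set $w_{n+1}=\hbar\inv(w_n-\psi(v_n))$. The partial sums $V_N=\sum_{n=0}^{N}\hbar^n v_n$ form a Cauchy sequence with respect to the $\hbar$-adic topology on $W_1$, and since $W_1$ is $\hbar$-adically complete they converge to some $v\in W_1$. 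Continuity of $\psi$, together with the telescoping identity $w-\psi(V_N)=\hbar^{N+1}w_{N+1}$, then gives $\psi(v)=w$.

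The only mildly delicate point is checking that the iterative construction is internally consistent, specifically that dividing by $\hbar$ at each stage makes sense and the resulting series converges in $W_1$; both rely on torsion-freeness and completeness of the ambient topologically free modules, which are packaged into Lemma \ref{tpfree-basics}(1). Once that is observed, the argument is essentially formal, and I do not anticipate any real obstacle — this is a standard Nakayama-type statement in the $\hbar$-adic setting.
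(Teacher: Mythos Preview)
Your argument is correct in both directions: the injectivity half is the usual separatedness/torsion-freeness induction, and the surjectivity half is the standard successive-approximation lift using completeness, with the telescoping identity $w-\psi(V_N)=\hbar^{N+1}w_{N+1}$ doing the work. Note that the paper does not supply its own proof of this lemma but simply cites \cite{Ka}; your write-up is exactly the standard Nakayama-type argument one finds there, so there is no meaningful difference in approach.
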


We also have the following simple facts (see \cite{Li-h-adic}; the proof of Proposition 3.7):

\begin{lem}\label{[U]}
Let $U$ be a $\C[[\hbar]]$-submodule of a $\C[[\hbar]]$-module $W$. Set
\begin{align}
[U]=\{ w\in W\ |\ \hbar^n w\in U\ \text{ for some }n\in \N\}.
\end{align}
Then $\left[ [U]\right]=[U]$. On the other hand, if $[U]=U$, then $U\cap \hbar^n W =\hbar^n U$ for $n\in \N$.
\end{lem}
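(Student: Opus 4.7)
The plan is to prove the two assertions directly from the definition of $[U]$, treating each as an elementary set-theoretic verification.

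For the first assertion $[[U]]=[U]$, I would prove the two inclusions separately. The inclusion $[U]\subseteq [[U]]$ is immediate because $[U]$ itself contains $U$, so any $w\in [U]$ satisfies $\hbar^0 w=w\in [U]$, placing $w$ in $[[U]]$. For the reverse inclusion, I would take $w\in [[U]]$, so by definition $\hbar^n w\in [U]$ for some $n\in\N$; unwinding the definition once more, $\hbar^m(\hbar^n w)\in U$ for some $m\in\N$, i.e. $\hbar^{m+n}w\in U$, which by definition means $w\in [U]$. This exhausts the first claim.

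For the second assertion, I would assume $[U]=U$ and prove $U\cap \hbar^n W=\hbar^n U$ by mutual inclusion. The inclusion $\hbar^n U\subseteq U\cap \hbar^n W$ is trivial from $U$ being a $\C[[\hbar]]$-submodule. For the nontrivial direction, take any $u\in U\cap \hbar^n W$, and write $u=\hbar^n w$ for some $w\in W$. Since $\hbar^n w=u\in U$, the definition of $[U]$ gives $w\in [U]$, and the hypothesis $[U]=U$ then forces $w\in U$. Therefore $u=\hbar^n w\in \hbar^n U$, completing the argument.

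There is no serious obstacle here, but one subtlety worth flagging is that $W$ is not assumed to be torsion-free, so the element $w$ in the representation $u=\hbar^n w$ need not be unique. This turns out to be harmless: the argument only uses the existence of some such $w$, and the conclusion $w\in U$ follows from the defining property of $[U]$ regardless of torsion. No completeness or separatedness hypothesis on $W$ is needed either, so the lemma is purely formal and the entire proof reduces to unwinding the definition of $[U]$.
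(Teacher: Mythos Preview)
Your proof is correct and is exactly the natural elementary verification; the paper itself does not give a proof but simply refers to \cite{Li-h-adic} (the proof of Proposition~3.7), where the same direct argument appears.
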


\begin{lem}\label{simple-fact-2}
Let $U$ be a $\C[[\hbar]]$-submodule of a topologically free $\C[[\hbar]]$-module $W$ such that $[U]=U$. Then
$\left[\bar{U}\right]=\bar{U}$, $\overline{\left[\bar{U}\right]}=\left[\bar{U}\right]$, and $\bar{U}$ is
topologically free.
\end{lem}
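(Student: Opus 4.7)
The plan is to verify the three assertions in order, using the description $\bar{U}=\bigcap_{n\in\N}(U+\hbar^n W)$ of the $\hbar$-adic closure and exploiting that $W$ is torsion-free, separated, and complete by Lemma~\ref{tpfree-basics}(1). The key idea is that the hypothesis $[U]=U$ should propagate to $\bar U$, and once that is established the topological-freeness of $\bar U$ will reduce to Lemma~\ref{tpfree-basics}(3) via the identity furnished by Lemma~\ref{[U]}.

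For the first equality $[\bar U]=\bar U$, I would take $w\in W$ with $\hbar^k w\in\bar U$ for some $k\in\N$ and, for each $n\in\N$, pick $u_n\in U$ with $\hbar^k w - u_n\in\hbar^{n+k}W$. Then $u_n\in\hbar^k W\cap U$, and torsion-freeness of $W$ gives $u_n=\hbar^k v_n$ for a unique $v_n\in W$; the hypothesis $[U]=U$ then forces $v_n\in U$. Cancelling $\hbar^k$ in $\hbar^k(w-v_n)\in\hbar^{n+k}W$ yields $w-v_n\in\hbar^n W$, so $w\in U+\hbar^n W$ for all $n$, and hence $w\in\bar U$. The second identity $\overline{[\bar U]}=[\bar U]$ is then immediate, since $[\bar U]=\bar U$ is already closed in $W$ and so equals its own closure.

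For the third assertion I would apply Lemma~\ref{tpfree-basics}(3): the submodule $\bar U$ inherits torsion-freeness and separatedness from $W$ automatically, so only $\hbar$-adic completeness with respect to its own intrinsic topology remains. Because $[\bar U]=\bar U$, Lemma~\ref{[U]} gives $\bar U\cap \hbar^n W=\hbar^n \bar U$ for every $n$, which says precisely that the subspace topology on $\bar U$ inherited from $W$ coincides with its intrinsic $\hbar$-adic topology. Since $\bar U$ is closed in the complete Hausdorff module $W$, it is complete in the subspace topology, and by the coincidence of topologies it is complete in its intrinsic $\hbar$-adic topology as well. The only mildly delicate point is exactly this comparison of topologies, since $\hbar$-adic completeness of a submodule is not automatic; once $[\bar U]=\bar U$ is in hand, however, that comparison is reduced to the identity supplied by Lemma~\ref{[U]}, and the rest is bookkeeping with the closure formula.
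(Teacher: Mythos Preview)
Your proof is correct. The paper does not actually supply its own proof of this lemma; it merely states it as a ``simple fact'' with a reference to \cite{Li-h-adic} (the proof of Proposition~3.7 there), so there is nothing to compare against in the present paper. Your argument is exactly the natural one: use $\bar U=\bigcap_n(U+\hbar^n W)$ together with $[U]=U$ and torsion-freeness of $W$ to get $[\bar U]=\bar U$, then invoke Lemma~\ref{[U]} to identify the intrinsic and subspace $\hbar$-adic topologies on $\bar U$, and conclude topological freeness from Lemma~\ref{tpfree-basics}(3) since a closed subspace of a complete Hausdorff module is complete.
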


\begin{lem}\label{simple-fact-3}
Let $U$ and $V$ be topologically free $\C[[\hbar]]$-modules and let $\psi: U\rightarrow V$
be a $\C[[\hbar]]$-module map. Then $[\ker (\psi)]=\ker (\psi)$, $\overline{\ker (\psi)}=\ker (\psi)$, and
$\ker (\psi)$ is topologically free.
\end{lem}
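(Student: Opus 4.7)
The plan is to prove the three assertions in order, with each leveraging exactly one of the defining properties of a topologically free $\C[[\hbar]]$-module (torsion-free, separated, $\hbar$-adically complete) as articulated in Lemma \ref{tpfree-basics}(1), and the topological freeness assertion then falling out immediately from Lemma \ref{simple-fact-2}.

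First I would verify $[\ker(\psi)] = \ker(\psi)$. The containment $\ker(\psi) \subset [\ker(\psi)]$ is trivial; for the reverse, if $\hbar^n w \in \ker(\psi)$, then $\hbar^n \psi(w) = \psi(\hbar^n w) = 0$ in $V$, and torsion-freeness of $V$ forces $\psi(w) = 0$. Next I would establish $\overline{\ker(\psi)} = \ker(\psi)$: given $w$ in the $\hbar$-adic closure of $\ker(\psi)$ inside $U$, pick $u_n \in \ker(\psi)$ with $w - u_n \in \hbar^n U$ for every $n$; then $\psi(w) = \psi(w - u_n) \in \hbar^n V$ for all $n$, and separatedness of $V$ yields $\psi(w) \in \bigcap_n \hbar^n V = 0$.

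Finally, topological freeness of $\ker(\psi)$ is immediate: the first assertion is precisely the hypothesis of Lemma \ref{simple-fact-2} (applied with ambient module $U$ and submodule $\ker(\psi)$), so $\overline{\ker(\psi)}$ is topologically free, and the second assertion then identifies this closure with $\ker(\psi)$ itself. I do not anticipate any real obstacle; the whole argument is a short bookkeeping exercise stringing together the preceding lemmas. The one technical point worth flagging is that a priori the subspace topology on $\ker(\psi)$ inherited from $U$ could differ from its intrinsic $\hbar$-adic topology, but once $[\ker(\psi)] = \ker(\psi)$ is in place, Lemma \ref{[U]} reconciles the two via $\ker(\psi) \cap \hbar^n U = \hbar^n \ker(\psi)$, which is implicitly what makes the closure step and the appeal to Lemma \ref{simple-fact-2} function cleanly.
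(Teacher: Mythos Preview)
Your argument is correct. The paper does not actually supply a proof of this lemma in the text; it is stated as one of several ``simple facts'' with a reference to \cite{Li-h-adic} (the proof of Proposition 3.7 there). Your approach---using torsion-freeness of $V$ for $[\ker(\psi)]=\ker(\psi)$, separatedness of $V$ for closedness, and then invoking Lemma \ref{simple-fact-2}---is precisely the natural way to unpack this, and your remark about reconciling the subspace and intrinsic topologies via Lemma \ref{[U]} is apt.
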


\begin{lem}\label{basic-facts-top-algebra}
Let $A$ be an associative algebra over $\C[[\hbar]]$, which is topologically free as a $\C[[\hbar]]$-module.

1) For any left (right) ideal $J$ of $A$ as a $\C[[\hbar]]$-algebra, $[J]$ is a  left (right) ideal.

2) For any  left (right) ideal $J$ of $A$ such that $[J]=J$, $\overline{J}$ is a  left (right) ideal of $A$
such that $[\overline{J}]=\overline{J}$ and $\overline{J}$ is topologically free.
 \end{lem}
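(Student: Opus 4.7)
The plan is to obtain (1) directly from the definition of the bracket operation $[-]$ on $\C[[\hbar]]$-submodules, and to derive (2) by combining Lemma \ref{simple-fact-2} (which handles the topological-closure aspects for free) with a short continuity argument showing that $\overline{J}$ is absorbed under left (or right) multiplication by elements of $A$.

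For part (1), I would fix a left ideal $J\subseteq A$ and verify that $[J]$ is again a left ideal. That $[J]$ is a $\C[[\hbar]]$-submodule is immediate from the definition in Lemma \ref{[U]}: given $w_1,w_2\in[J]$ with $\hbar^{n_i}w_i\in J$, the choice $n=\max(n_1,n_2)$ gives $\hbar^n(w_1+w_2)\in J$, and for $c(\hbar)\in\C[[\hbar]]$ one has $\hbar^n(c(\hbar)w)=c(\hbar)(\hbar^nw)\in J$ because $J$ is a $\C[[\hbar]]$-submodule. The key step is absorption: for any $a\in A$ and $w\in[J]$ with $\hbar^nw\in J$, the identity $\hbar^n(aw)=a(\hbar^nw)\in J$ forces $aw\in[J]$. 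The right-ideal case is symmetric.

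For part (2), I would first apply Lemma \ref{simple-fact-2} with $W=A$ and $U=J$ (whose hypotheses $[J]=J$ and $A$ topologically free are given) to conclude at once that $[\overline{J}]=\overline{J}$ and that $\overline{J}$ is topologically free. The only remaining content is that $\overline{J}$ is a left (right) ideal. This reduces to showing that, for every $a\in A$, the map $\lambda_a:A\to A$ of left multiplication by $a$ is continuous in the $\hbar$-adic topology. Since $A$ is an algebra over $\C[[\hbar]]$, its multiplication is $\C[[\hbar]]$-bilinear, so $\lambda_a$ is $\C[[\hbar]]$-linear and hence satisfies $\lambda_a(\hbar^n A)\subseteq\hbar^n A$ for every $n\in\N$; this is precisely continuity for the $\hbar$-adic topologies on source and target. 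Consequently, if $w\in\overline{J}$ is an $\hbar$-adic limit of a sequence $\{w_k\}\subseteq J$, then $aw=\lim_k aw_k\in\overline{J}$, since each $aw_k\in J$. The right-ideal case is parallel.

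The argument is essentially free of genuine obstacles: if anything deserves care, it is only the bookkeeping of invoking Lemma \ref{simple-fact-2} in the correct direction, and the observation that the continuity needed in part (2) is not an additional hypothesis on $A$ but is automatic from $A$ being a $\C[[\hbar]]$-algebra in the $\C[[\hbar]]$-bilinear sense. In particular, no appeal to Lemma \ref{free-top} or Lemma \ref{simple-fact-3} is required for this statement.
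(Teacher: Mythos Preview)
The paper states this lemma without proof, treating it as a basic fact alongside the preceding lemmas on topologically free modules. Your argument is correct and is exactly the natural verification: part (1) follows directly from the definition of $[J]$ together with $\C[[\hbar]]$-bilinearity of the product, and part (2) is Lemma \ref{simple-fact-2} plus the observation that left (or right) multiplication by a fixed element is $\hbar$-adically continuous because it is $\C[[\hbar]]$-linear.
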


\begin{lem}\label{strong-submodule}
Let $W$ be a topologically free $\C[[\hbar]]$-module and let $U$ be a $\C[[\hbar]]$-submodule such that
$[U]=U$ and $U$ is $\hbar$-adically complete. Then there exist a $\C$-subspace $W_0$ of $W$ and a $\C$-subspace $U_0$ of $W_0$ such that
$W=W_0[[\hbar]]$ and $U=U_0[[\hbar]]$. Furthermore, $W/U=(W_0/U_0)[[\hbar]]$. In particular, $W/U$ is topologically free.
\end{lem}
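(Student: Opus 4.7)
The plan is to lift a carefully chosen direct sum decomposition of $W/\hbar W$ back to $W$, using the saturation hypothesis $[U]=U$ to control the interaction between $U$ and $\hbar W$. The construction proceeds in two stages: first produce a $\C$-subspace $U_0$ with $U=U_0[[\hbar]]$, then enlarge it to a $\C$-subspace $W_0\supseteq U_0$ with $W=W_0[[\hbar]]$.

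To begin, since $W$ is topologically free it is torsion-free and separated, so the hypothesis that $U$ is $\hbar$-adically complete together with Lemma \ref{tpfree-basics}(3) shows that $U$ is itself topologically free. Hence we may choose a $\C$-subspace $U_0\subseteq U$ satisfying $U=U_0\oplus \hbar U$, i.e., $U=U_0[[\hbar]]$. Next, because $[U]=U$, Lemma \ref{[U]} yields $U\cap \hbar W=\hbar U$, so the natural map $U/\hbar U\hookrightarrow W/\hbar W$ is injective; denote its image by $\overline{U}$. Pick a $\C$-complement $X_0$ of $\overline{U}$ in $W/\hbar W$ and lift it to a $\C$-subspace $X\subseteq W$ via any $\C$-linear section of the projection $W\to W/\hbar W$. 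Set $W_0:=U_0+X$.

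The main calculation is to verify $W=W_0\oplus \hbar W$ over $\C$, after which Lemma \ref{tpfree-basics}(2) delivers $W=W_0[[\hbar]]$. The spanning property is immediate by lifting any class in $W/\hbar W$ through the decomposition $\overline{U}\oplus X_0$. For the direct-sum property, suppose $u+x\in \hbar W$ with $u\in U_0$ and $x\in X$; reducing modulo $\hbar W$ and invoking $\overline{U}\oplus X_0$ forces $\overline{u}=\overline{x}=0$, hence $x=0$ (the section is injective modulo $\hbar W$ on $X$) and $u\in U\cap \hbar W=\hbar U$, whence $u\in U_0\cap \hbar U=0$ since $U=U_0[[\hbar]]$. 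This is the step where the saturation condition $[U]=U$ is used decisively, and it is the main obstacle: without it, one could have $u\in \hbar W$ yet $u\notin \hbar U$, destroying the direct sum.

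Finally, to identify the quotient, choose a $\C$-complement $Y_0$ of $U_0$ inside $W_0$. Then $W=W_0[[\hbar]]=U_0[[\hbar]]\oplus Y_0[[\hbar]]=U\oplus Y_0[[\hbar]]$ as $\C[[\hbar]]$-modules, so passing to the quotient gives $W/U\cong Y_0[[\hbar]]=(W_0/U_0)[[\hbar]]$, which is topologically free by definition. This completes the proposed proof.
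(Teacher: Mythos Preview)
Your proof is correct. The paper states this lemma without proof (it is listed among the preliminary facts in Section~2.1), so there is no argument to compare against; your approach via Lemmas~\ref{tpfree-basics} and~\ref{[U]} is exactly the natural one. One minor remark: in your construction you already have $W_0=U_0\oplus X$ (the intersection $U_0\cap X$ vanishes by the same argument you use for the direct-sum verification), so you may take $Y_0=X$ directly rather than choosing a new complement at the end.
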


Let $W$ be a topologically free $\C[[\hbar]]$-module. For a sequence $\{ w_m\}_{m\in \Z}$ in $W$, we write
$\lim_{m\rightarrow \infty}w_m=0$ to signify that for every $n\in \Z_{+}$, there exists $k\in \Z_{+}$ such that
$w_m\in \hbar^nW$ for all $m\ge k$. Define
\begin{align}
W_{\hbar}((x))=\left\{ \sum_{m\in \Z}w(m)x^{-m-1}\in W[[x,x^{-1}]]\ |\ \lim_{m\rightarrow \infty}w(m)=0\right\}.
\end{align}
If $W=W_0[[\hbar]]$ with $W_0$ a vector space over $\C$, then $W_{\hbar}((x))=W_0((x))[[\hbar]]$.

For each $n\ge 1$, we have a natural $\C[[\hbar]]$-module map
\begin{align}
\pi_n:\ (\te{End} W)[[x,x^{-1}]]\longrightarrow (\te{End} (W/\hbar^nW))[[x,x^{-1}]].
\end{align}
Define
\begin{eqnarray}
\quad\quad\quad
 \E_\hbar(W)=\{\psi(x)\in (\te{End} W)[[x,x^{-1}]]\ |\ \pi_n(\psi(x))\in \E(W/\hbar^nW)\ \ \text{ for }n\ge 1\}.
\end{eqnarray}
Then $\E_\hbar(W)$ is a topologically free $\C[[\hbar]]$-module.
More specifically, if $W=W_0[[\hbar]]$, then $\E_\hbar(W)=\E(W_0)[[\hbar]]$.

\subsection{$\hbar$-adic nonlocal vertex algebras and quantum vertex algebras}

The following notion was introduced in \cite{Li-h-adic}:

\begin{de}\label{de:h-adic-nonlocal-va}
An {\em $\hbar$-adic nonlocal vertex algebra} is a topologically free $\C[[\hbar]]$-module $V$,
equipped with a $\C[[\hbar]]$-module map
$Y(\cdot,x):V\to (\End V)[[x,x^{-1}]]$ and a distinguished vector $\vac\in V$,
satisfying the conditions that
$$Y(u,x)v\in V_{\hbar}((x))\quad \text{ for }u,v\in V,$$
$$Y({\bf 1},x)=1_V, \quad Y(v,x){\bf 1}\in V[[x]]\ \ \text{ and }\ \ (Y(v,x){\bf 1})|_{x=0}=v\ \text{ for }v\in V,$$
and that for any $u,v,w\in V$ and for any $n\in \Z_{+}$, there exists $l\in \N$ such that
\begin{align}
(x_0+x_2)^lY(u,x_0+x_2)Y(v,x_2)w\equiv (x_2+x_0)^lY(Y(u,x_0)v,x_2)w
\end{align}
modulo $\hbar^n V[[x_0^{\pm 1},x_2^{\pm 1}]]$.
\end{de}

Immediately from definition we have:

\begin{prop}
 Let $V$ be a topologically free $\C[[\hbar]]$-module equipped with a $\C[[\hbar]]$-module map
$Y(\cdot,x):V\to (\End V)[[x,x^{-1}]]$ and a vector $\vac\in V$. Then $(V,Y,{\bf 1})$ is an $\hbar$-adic nonlocal vertex algebra
if and only if for every positive integer $n$, $(V/\hbar^nV, \overline{Y}_n,\vac)$
is a nonlocal vertex algebra over $\C$, where $\overline{Y}_n(\cdot,x):V/\hbar^nV\to(\End(V/\hbar^nV))[[x,x^{-1}]]$
is the canonical map induced from $Y(\cdot,x)$.
\end{prop}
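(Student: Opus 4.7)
The plan is to verify the equivalence by translating each axiom of an $\hbar$-adic nonlocal vertex algebra into its counterpart for each quotient $V/\hbar^n V$, relying on the fact that a topologically free $\C[[\hbar]]$-module is $\hbar$-adically separated (Lemma~\ref{tpfree-basics}(1)).

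For the forward direction, I assume $(V,Y,\vac)$ is an $\hbar$-adic nonlocal vertex algebra. Since $Y$ is $\C[[\hbar]]$-linear, it descends to a well-defined $\C$-linear map $\overline{Y}_n:V/\hbar^n V\to (\End(V/\hbar^n V))[[x,x^{-1}]]$, and $\vac$ descends to a distinguished vector in $V/\hbar^n V$. I would check the four axioms in turn: the truncation $Y(u,x)v\in V_{\hbar}((x))$ says precisely that the coefficient of $x^{-m-1}$ lies in $\hbar^n V$ for $m$ sufficiently large, which is the same as saying $\overline{Y}_n(\bar u,x)\bar v\in (V/\hbar^n V)((x))$; the vacuum identity $Y(\vac,x)=1_V$ and the creation property $Y(v,x)\vac\in V[[x]]$ with $(Y(v,x)\vac)|_{x=0}=v$ pass to the quotient by $\C[[\hbar]]$-linearity; and weak associativity, which is stated modulo $\hbar^n$ in $V$ for all $n$, becomes genuine weak associativity in $V/\hbar^n V$ for the fixed $n$ (with the same integer $l$).

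For the reverse direction, I assume $(V/\hbar^n V,\overline{Y}_n,\vac)$ is a nonlocal vertex algebra over $\C$ for every $n\geq 1$, and I verify each defining condition for $V$. Given $u,v\in V$, writing $Y(u,x)v=\sum_m w_m x^{-m-1}$, the truncation in $V/\hbar^n V$ forces $w_m\in \hbar^n V$ for $m$ sufficiently large (depending on $u,v,n$); letting $n$ vary, this gives $\lim_{m\to\infty}w_m=0$, so $Y(u,x)v\in V_{\hbar}((x))$. The identity $Y(\vac,x)=1_V$ holds modulo $\hbar^n$ for every $n$, and since $V$ (hence $(\End V)[[x,x^{-1}]]$) is $\hbar$-adically separated, the difference $Y(\vac,x)-1_V$ vanishes; the same separation argument handles the creation axiom. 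Finally, for weak associativity, given $u,v,w\in V$ and $n\in\Z_+$, the assumption in $V/\hbar^n V$ produces an integer $l$ for which the weak associativity identity holds in $V/\hbar^n V$, and lifting this back to $V$ yields precisely the required congruence modulo $\hbar^n V[[x_0^{\pm 1},x_2^{\pm 1}]]$.

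There is no substantial obstacle here: everything reduces to bookkeeping, the key input being Hausdorff separation of topologically free $\C[[\hbar]]$-modules. The only mildly delicate point is the equivalence of the truncation condition $Y(u,x)v\in V_{\hbar}((x))$ with ordinary lower truncation in each quotient, which is immediate from the description $W_{\hbar}((x))=W_0((x))[[\hbar]]$ when $W=W_0[[\hbar]]$ recorded right after the definition of $V_{\hbar}((x))$.
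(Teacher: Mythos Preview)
Your proposal is correct and is precisely the unpacking of definitions the paper has in mind: the paper gives no explicit proof and simply notes ``Immediately from definition we have'' before stating the proposition. Your axiom-by-axiom translation, with separation (Lemma~\ref{tpfree-basics}(1)) used to pass from congruences mod $\hbar^n$ for all $n$ back to equalities in $V$, is exactly the routine verification intended.
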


Let  $V=V_0[[\hbar]]$ be an $\hbar$-adic nonlocal vertex algebra.  Define a $\C[[\hbar]]$-module map
\begin{align}
Y(x_1,x_2): \ V\wh\ot V\wh\ot \C((x))[[\hbar]]\rightarrow (\End V)[[x_1^{\pm 1},x_2^{\pm 1}]]
\end{align}
by
$$Y(x_1,x_2)(u\otimes v\otimes f(x))=f(x_1-x_2)Y(u,x_1)Y(v,x_2)$$
for $u,v\in V_0,\ f(x)\in \C((x))$.
(Recall that $V\widehat{\otimes}V\widehat{\otimes} \C((x))[[\hbar]]=(V_0\otimes V_0\otimes \C((x)))[[\hbar]]$.)

\begin{de}\label{def-hwqva}
{\em An $\hbar$-adic weak quantum vertex algebra} is an $\hbar$-adic nonlocal vertex algebra $V$
which satisfies the $\hbar$-adic {\em $\SY$-locality:} For any $u,v\in V$,
there exists $F(u,v,x)\in V\wh\ot V\wh\ot \C((x))[[\hbar]]$, satisfying the condition that for every $n\in \Z_{+}$,
there is $k\in \N$ such that
\begin{align}
(x_1-x_2)^kY(u,x_1)Y(v,x_2)\equiv (x_1-x_2)^kY(x_2,x_1)(F(u,v,x))
\end{align}
modulo $\hbar^n(\End V)[[x_1^{\pm 1},x_2^{\pm 1}]]$.
\end{de}

\begin{de}
For $A(x_1,x_2),B(x_1,x_2)\in W[[x_1^{\pm 1},x_2^{\pm 1}]]$ with $W$ any $\C[[\hbar]]$-module,
we write $A(x_1,x_2)\sim B(x_1,x_2)$ if for every $n\in \Z_{+}$, there exists $k\in\N$ such that
\begin{align}
  (x_1-x_2)^kA(x_1,x_2)\equiv (x_1-x_2)^kB(x_1,x_2)\   \mod \hbar^n W[[x_1^{\pm 1},x_2^{\pm 1}]].
\end{align}
\end{de}

From definition, for any $\hbar$-adic weak quantum vertex algebra $V$,
there is a $\C[[\hbar]]$-module map $F(x): V\wh\ot V\rightarrow V\wh\ot V\wh\ot \C((x))[[\hbar]]$ such that
\begin{align}
Y(u,x_1)Y(v,x_2)\sim Y(x_2, x_1)F(x)(v\otimes u).
\end{align}
Such a map $F(x)$ is called an {\em $\SY$-locality operator} of $V$.

 We have (see \cite{Li-h-adic}):

 \begin{prop}\label{lem:q-Jacobi}
 Let $V$ be an $\hbar$-adic weak quantum vertex algebra and let $\SY(x)$ be an $\SY$-locality operator of $V$. Then
 \begin{align}
&x_0\inv\delta\!\(\frac{x_1-x_2}{x_0}\)\!  Y(u,x_1)Y(v,x_2)w\\
&\quad -x_0\inv\delta\!\(\frac{x_2-x_1}{-x_0}\)\! Y(x_2,x_1)(\SY(x)(v\ot u))w\nonumber\\
  =\ & x_1\inv\delta\!\(\frac{x_2+x_0}{x_1}\)\!Y(Y(u,x_0)v,x_2)w\nonumber
\end{align}
 for $u,v,w\in V$, which is called the {\em $\SY$-Jacobi identity for the ordered triple $(u,v,w)$.}
 \end{prop}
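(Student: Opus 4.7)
The plan is to reduce the identity to a statement modulo $\hbar^n$ for each $n\in\Z_+$ and then invoke the classical $\SY$-Jacobi identity for weak quantum vertex algebras over $\C$, which is the non-$\hbar$-adic prototype of the statement. Since the quotients $V/\hbar^n V$ are automatically nonlocal vertex algebras over $\C$ (by the proposition immediately after Definition \ref{de:h-adic-nonlocal-va}), the only thing to verify is that they are actually weak quantum vertex algebras over $\C$.

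First, I would fix $u,v,w\in V$ and $n\in\Z_+$. From the $\hbar$-adic $\SY$-locality there exist $k\in\N$ and $\SY(x)(v\otimes u)\in V\wh\ot V\wh\ot \C((x))[[\hbar]]$ such that
\begin{align*}
(x_1-x_2)^k Y(u,x_1)Y(v,x_2)\equiv (x_1-x_2)^k Y(x_2,x_1)(\SY(x)(v\ot u)) \mod \hbar^n.
\end{align*}
The reduction of $\SY(x)(v\otimes u)$ modulo $\hbar^n$ lies in $(V/\hbar^n V)\otimes (V/\hbar^n V)\otimes \C((x))$, which is an honest (uncompleted) tensor product; write it as a finite sum $\sum_i \bar v_i\otimes \bar u_i\otimes f_i(x)$. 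Thus the induced structure $(V/\hbar^nV,\overline{Y}_n,\vac)$ is a weak quantum vertex algebra over $\C$ in the sense of Li, with $\SY$-locality operator given by the reduction of $\SY(x)$.

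Second, I would apply the classical $\SY$-Jacobi identity for such algebras, whose proof is standard: combine the weak associativity
\begin{align*}
(x_0+x_2)^\ell\, \overline{Y}_n(u,x_0+x_2)\overline{Y}_n(v,x_2)w \;=\; (x_2+x_0)^\ell\, \overline{Y}_n(\overline{Y}_n(u,x_0)v,x_2)w
\end{align*}
(valid for $\ell$ sufficiently large, by the definition of $\hbar$-adic nonlocal vertex algebra modulo $\hbar^n$) with the $\SY$-locality above, multiply through by $(x_1-x_2)^{-\max(k,\ell)}$ after expanding via the delta-function identity
\begin{align*}
x_0^{-1}\delta\!\(\frac{x_1-x_2}{x_0}\) - x_0^{-1}\delta\!\(\frac{x_2-x_1}{-x_0}\) = x_1^{-1}\delta\!\(\frac{x_2+x_0}{x_1}\),
\end{align*}
and use the substitution property of delta functions. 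This produces the $\SY$-Jacobi identity in $V/\hbar^n V$.

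Third, since the congruence holds for every $n\geq 1$ and $V$ is $\hbar$-adically complete and separated, taking the inverse limit lifts the mod-$\hbar^n$ identities to a genuine equality in $V[[x_0^{\pm 1},x_1^{\pm 1},x_2^{\pm 1}]]$. The main technical point I expect to require some care is showing that the expression $Y(x_2,x_1)(\SY(x)(v\otimes u))w$ is a well-defined element of $V[[x_1^{\pm 1},x_2^{\pm 1}]]$ in the $\hbar$-adic topology, and that the coefficient-wise reduction modulo $\hbar^n$ agrees with applying the reduced operator to the reduced argument; this is a consequence of $\SY(x)(v\ot u)$ being truncated below in $x$ together with $Y(\cdot,x_2)Y(\cdot,x_1)w$ having output in $W_\hbar((x_1))((x_2))$. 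Once this compatibility is in hand, the three steps above yield the claim.
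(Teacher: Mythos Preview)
Your proposal is correct and follows exactly the standard reduction argument: the paper does not prove this proposition but cites \cite{Li-h-adic}, and the proof there proceeds precisely by passing to $V/\hbar^n V$, invoking the classical $\SY$-Jacobi identity for weak quantum vertex algebras over $\C$ (from \cite{Li-nonlocal}), and using separatedness and completeness to recover the identity in $V$. Your attention to the well-definedness of $Y(x_2,x_1)(\SY(x)(v\otimes u))w$ and to the finiteness of the reduced $\SY$-locality data is exactly the bookkeeping needed, and your outline of the classical step (combining $\SY$-locality with weak associativity via the three-term delta identity) is the standard one.
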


Note that an immediate consequence of the $\SY$-Jacobi identity above is
\begin{align}
  Y(u,x_1)&Y(v,x_2)w-Y(x_2,x_1)(\mathcal{S}(x)(v\ot u))w\\
  &= \Res_{x_0}x_1\inv\delta\!\(\frac{x_2+x_0}{x_1}\)\!Y(Y(u,x_0)v,x_2)w\nonumber
\end{align}
(the {\em $\mathcal{S}$-commutator formula}) for $u,v,w\in V$.

For any $\hbar$-adic nonlocal vertex algebra $V$, define a $\C[[\hbar]]$-linear operator $\D$ on $V$ by
\begin{align}
\D (v)=v_{-2}{\bf 1}=\left(\frac{d}{dz}Y(v,z){\bf 1}\right)|_{z=0}\quad \text{ for }v\in V.
\end{align}
The following are the basic properties:
\begin{align}
&[\D,Y(v,x)]=Y(\D (v),x)=\frac{d}{dx}Y(v,x),\\
&e^{z\D}Y(v,x)e^{-z\D}=Y(e^{z\D}v,x)=Y(v,x+z),\\
&Y(v,x){\bf 1}=e^{x\D}v\quad \quad \text{ for }v\in V.
\end{align}

A {\em rational quantum Yang-Baxter operator} on a topologically free $\C[[\hbar]]$-module $W$
 is a $\C[[\hbar]]$-module map
\begin{align}
  \SY(x):\ W\wh\ot W\to W\wh\ot W\wh\ot \C((x))[[\hbar]],
\end{align}
satisfying the following \emph{rational quantum Yang-Baxter equation}:
\begin{align}\label{eq:qyb}
  \SY^{12}(x)\SY^{13}(x+z)\SY^{23}(z)=\SY^{23}(z)\SY^{13}(x+z)\SY^{12}(x)
\end{align}
on $W\wh\ot W\wh \ot W$. It is said to be {\em unitary} if
\begin{align}\label{eq:qyb-unitary}
  \SY^{21}(x)\SY(-x)=1,
\end{align}
where $\SY^{21}(x)=\sigma \SY(x)\sigma$ with $\sigma$ denoting the flip operator on $W\wh\ot W$.

The following notion, formulated in \cite{Li-h-adic}, slightly generalizes Etingof-Kazhdan's notion of
quantum vertex operator algebra (see \cite{EK-qva}; cf. Remark \ref{hqva-EKqvoa} below):

\begin{de}\label{h-qva}
An \emph{$\hbar$-adic quantum vertex algebra}
is an $\hbar$-adic nonlocal vertex algebra $V$ equipped with a unitary rational quantum Yang-Baxter operator
$\SY(x)$ on $V$, such that $\SY(x)$ is an $\SY$-locality operator of $V$,
\begin{align}\label{eq:qyb-shift}
  [\mathcal{D}\ot 1,\SY(x)]=-\frac{d}{dx}\SY(x)
\end{align}
(the \emph{shift condition}),  and
\begin{align}\label{eq:qyb-hex-id}
    \SY(x)(Y(z)\ot 1)=(Y(z)\ot 1)\SY^{23}(x)\SY^{13}(x+z)
  \end{align}
 (the \emph{hexagon identity}) on $V\wh\ot V\wh \ot V$.
\end{de}

\begin{rem}\label{hqva-EKqvoa}
{\em Note that a quantum vertex operator algebra in the sense of \cite{EK-qva}
is an $\hbar$-adic quantum vertex algebra $V$ such that $V/\hbar V$ is a vertex algebra.
For a general $\hbar$-adic quantum vertex algebra $V$,
$V/\hbar V$ is a quantum vertex algebra.}
\end{rem}

The following is a reformulation of Proposition 1.11 of \cite{EK-qva}:

\begin{prop}\label{nondegenerate-wqva}
Let $V$ be an $\hbar$-adic weak quantum vertex algebra which is non-degenerate in the sense that
 the nonlocal vertex algebra $V/\hbar V$ is non-degenerate.
Then there exists a unique $\SY$-locality operator
$\SY(x)$. Furthermore, $\SY(x)$ is a unitary rational quantum Yang-Baxter operator and $(V,\SY(x))$
is an $\hbar$-adic quantum vertex algebra.
\end{prop}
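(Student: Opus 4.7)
The plan is to derive all required properties of $\SY(x)$ from its uniqueness together with repeated applications of $\SY$-locality; the backbone is the lift of non-degeneracy from $V/\hbar V$ to $V$.

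For \emph{uniqueness}, suppose $\SY_1(x)$ and $\SY_2(x)$ are both $\SY$-locality operators and set $D(x)=\SY_1(x)-\SY_2(x)$. Subtracting the two $\SY$-locality relations gives, for every $u,v,w\in V$,
\begin{align*}
Y(x_2,x_1)(D(x)(v\ot u))w\sim 0.
\end{align*}
Reducing modulo $\hbar$ and invoking the non-degeneracy of $V/\hbar V$ yields $D\equiv 0\pmod{\hbar}$. An $\hbar$-adic induction then completes the argument: if $D\equiv 0\pmod{\hbar^n}$, topological freeness (Lemma \ref{tpfree-basics}) produces a well-defined $\C[[\hbar]]$-module map $\hbar^{-n}D$ satisfying the same vanishing condition, and the mod-$\hbar$ step applied to $\hbar^{-n}D$ gives $D\equiv 0\pmod{\hbar^{n+1}}$. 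Hausdorffness then forces $D=0$.

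Given uniqueness, I would derive the remaining axioms by applying $\SY$-locality in several ways and matching via uniqueness. For \emph{unitarity}, starting from $Y(u,x_1)Y(v,x_2)\sim Y(x_2,x_1)(\SY(x)(v\ot u))$ and applying $\SY$-locality once more to each swapped product $Y(a,x_2)Y(b,x_1)$ appearing on the right, one arrives at $Y(x_1,x_2)$ applied to $\sigma\SY^{21}(x)\SY(-x)(v\ot u)$ after tracking the flip $\sigma$ and the substitution $x\mapsto -x$. Comparing with the tautology $Y(u,x_1)Y(v,x_2)=Y(x_1,x_2)(u\ot v)$ and invoking uniqueness gives $\SY^{21}(x)\SY(-x)=1$. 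The \emph{Yang-Baxter equation} is obtained by expanding $Y(u_1,x_1)Y(u_2,x_2)Y(u_3,x_3)$ via three successive $\SY$-locality moves in the two braid-like orderings of the three pairwise swaps and equating the resulting expressions by uniqueness. The \emph{shift condition} follows by differentiating the $\SY$-locality relation in $x_2$, using $[\mathcal D,Y(v,x)]=\frac{d}{dx}Y(v,x)$, and reading off $[\mathcal D\ot 1,\SY(x)]=-\frac{d}{dx}\SY(x)$ via uniqueness.

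For the \emph{hexagon identity}, I would compare two paths that take the triple product $Y(u,x_1)Y(v,x_2)Y(w,x_3)$ to a ``swapped and combined'' form, using associativity (the $\SY$-Jacobi identity of Proposition \ref{lem:q-Jacobi}) to combine $Y(u,x_1)Y(v,x_2)$ into $Y(Y(u,z)v,x_2)$ with $z=x_1-x_2$, and using $\SY$-locality to swap past $Y(w,x_3)$ with $x=x_2-x_3$. Swapping the combined operator past $Y(w,x_3)$ produces $\SY(x)(Y(z)\ot 1)$ applied to $u\ot v\ot w$, while swapping $Y(u,x_1)$ and $Y(v,x_2)$ successively past $Y(w,x_3)$ and then combining yields $(Y(z)\ot 1)\SY^{23}(x)\SY^{13}(x+z)$ applied to $u\ot v\ot w$. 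Uniqueness forces the two to agree. The main obstacle is the uniqueness step, bridging the purely $\C$-linear non-degeneracy of $V/\hbar V$ to a genuine $\C[[\hbar]]$-uniqueness on $V$; everything else is a formal consequence of $\SY$-locality and uniqueness, modulo careful bookkeeping of flips and substitutions to match the identities \eqref{eq:qyb}--\eqref{eq:qyb-hex-id}.
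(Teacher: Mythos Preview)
The paper does not actually prove this proposition: it is stated with the preamble ``The following is a reformulation of Proposition 1.11 of \cite{EK-qva}'' and no argument is supplied. So there is no in-paper proof to compare against; the result is simply imported from Etingof--Kazhdan (with the $\hbar$-adic packaging coming from \cite{Li-h-adic}).

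That said, your sketch is the standard argument behind the cited result and is correct in outline. Two small points worth tightening if you wanted a self-contained proof: (i) in the uniqueness step, the relation $Y(x_2,x_1)(D(x)(v\ot u))\sim 0$ only gives $(x_1-x_2)^k Y(x_2,x_1)(D(x)(v\ot u))w\equiv 0$ modulo $\hbar^n$ with $k$ depending on $u,v,w,n$, so one should apply this with $w={\bf 1}$ and then invoke injectivity of the $Z_2$-map (this is the precise form of non-degeneracy in \cite{EK-qva}) to kill the reduction of $D$; the passage from ``vanishes after multiplying by $(x_1-x_2)^k$'' to ``vanishes'' uses that one is working in $\C((x_2))((x_1))$ where $(x_1-x_2)^k$ is invertible; (ii) for the hexagon and Yang--Baxter identities, the comparison really takes place after applying to ${\bf 1}$ and using injectivity of the $Z_3$-map, not merely ``uniqueness of $\SY$'' in the two-variable sense---this is how the three-tensor identities are pinned down in \cite{EK-qva}. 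With those refinements your argument is exactly the Etingof--Kazhdan proof, lifted to the $\hbar$-adic setting by the induction you describe.
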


 \begin{de}
 Let $V$ be an $\hbar$-adic nonlocal vertex algebra. A {\em $V$-module} is
 a topologically free $\C[[\hbar]]$-module $W$ equipped with a $\C[[\hbar]]$-module map
 $$Y_W(\cdot,x):\ V\rightarrow  (\End W)[[x,x^{-1}]];\quad v\mapsto Y_W(v,x),$$
satisfying the conditions that
$$Y_W(v,x)\in \E_{\hbar}(W)\quad \text{ for }v\in V,$$
 $Y_W({\bf 1},x)=1_W$ and that
the {\em $\hbar$-adic weak associativity} holds: For any $u,v\in V,\ w\in W$ and for every $n\in \Z_{+}$,
there exists $l\in \N$ such that
\begin{align}
(x_0+x_2)^lY_W(u,x_0+x_2)Y_W(v,x_2)w\equiv (x_2+x_0)^lY_W(Y(u,x_0)v,x_2)w
\end{align}
modulo $\hbar^n W[[x_0^{\pm 1},x_2^{\pm 1}]]$.
\end{de}

Let $(W,Y_W)$ be a module for an $\hbar$-adic nonlocal vertex algebra $V=V_0[[\hbar]]$.
As an analogue of the map $Y(x_1,x_2)$, define a $\C[[\hbar]]$-module map
\begin{align}
Y_W(x_1,x_2):\ V\widehat{\otimes}V\widehat{\otimes} \C((x))[[\hbar]]\rightarrow (\End W)[[x_1^{\pm 1},x_2^{\pm 1}]]
\end{align}
by
\begin{align*}
Y_W(x_1,x_2)(u\otimes v\otimes f(x))=f(x_1-x_2)Y_W(u,x_1)Y_W(v,x_2)
\end{align*}
for $u,v\in V_0,\ f(x)\in \C((x))$.

We have (see \cite{Li-h-adic}; cf. \cite{Li-nonlocal}) :

\begin{lem}\label{sim-commutator}
Let $V$ be an $\hbar$-adic nonlocal vertex algebra, let  $(W,Y_W)$ be a $V$-module, and let
$$u,v \in V,\ A(x)\in V\widehat{\otimes}V\widehat{\otimes} \C((x))[[\hbar]].$$
Then
$$Y_W(u,x_1)Y_W(v,x_2)\sim Y_W(x_2,x_1)(A(x))$$
if and only if
 \begin{align}
&x_0^{-1}\delta\!\(\frac{x_1-x_2}{x_0}\)\! Y_W(u,x_1)Y_W(v,x_2)
-x_0^{-1}\delta\!\(\frac{x_2-x_1}{-x_0}\)\! Y_W(x_2,x_1)(A(x))\nonumber\\
 &\hspace{2cm} = x_1^{-1}\delta\!\(\frac{x_2+x_0}{x_1}\)\!Y_W(Y(u,x_0)v,x_2).\nonumber
\end{align}
\end{lem}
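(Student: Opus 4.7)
The plan is to reduce the equivalence to its mod-$\hbar^n$ analogue for each $n\in\Z_+$, apply the classical locality--Jacobi equivalence for nonlocal vertex algebra modules over $\C$ (as in \cite{Li-nonlocal}), and then pass to the $\hbar$-adic limit. For each $n$, the quotient $V/\hbar^n V$ is a genuine nonlocal vertex algebra over $\C$ and $W/\hbar^n W$ is a module for it (by the characterization of $\hbar$-adic nonlocal vertex algebras recalled above); the $\sim$-relation in the lemma unfolds, by definition, into a family of mod-$\hbar^n$ locality relations with exponents $k=k(n)\in\N$ possibly growing in $n$, and the $\SY$-Jacobi identity (an exact equation in an $\hbar$-adically complete space) holds iff its reduction holds modulo every $\hbar^n$. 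So the entire statement is equivalent, level by level, to the classical assertion for $V/\hbar^n V$ acting on $W/\hbar^n W$.

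For the direction $\Rightarrow$, I would combine the mod-$\hbar^n$ locality with the module's $\hbar$-adic weak associativity (choosing a common exponent $k$ sufficiently large for both) and apply the standard delta-function calculus of \cite{FLM}, using the fundamental identity
\[
x_0^{-1}\delta\!\(\frac{x_1-x_2}{x_0}\) - x_0^{-1}\delta\!\(\frac{x_2-x_1}{-x_0}\) = x_1^{-1}\delta\!\(\frac{x_2+x_0}{x_1}\),
\]
together with the substitution rule $x_1^{-1}\delta\!\(\frac{x_2+x_0}{x_1}\) F(x_1) = x_1^{-1}\delta\!\(\frac{x_2+x_0}{x_1}\) F(x_2+x_0)$, to deduce the $\SY$-Jacobi identity modulo $\hbar^n$; since $n$ is arbitrary, it holds exactly. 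Conversely, for $\Leftarrow$, I would apply $\Res_{x_0}x_0^k$ to both sides of the Jacobi identity, evaluated at an arbitrary $w\in W$. Standard residue computations turn the left side into $(x_1-x_2)^k\,[Y_W(u,x_1)Y_W(v,x_2) - Y_W(x_2,x_1)(A(x))]w$, while the right side $\Res_{x_0}x_0^k\, x_1^{-1}\delta\!\(\frac{x_2+x_0}{x_1}\)Y_W(Y(u,x_0)v,x_2)w$ vanishes modulo $\hbar^n$ as soon as $k$ exceeds the $x_0$-pole order of $Y_W(Y(u,x_0)v,x_2)w$ modulo $\hbar^n$, which yields the $\sim$-relation.

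The main obstacle is the pole-order bookkeeping modulo $\hbar^n$. In the reverse direction, the finiteness of the $x_0$-pole order of $Y_W(Y(u,x_0)v,x_2)w$ modulo $\hbar^n$ traces back to $Y(u,x_0)v\in V_\hbar((x_0))$ and $Y_W(\cdot,x_2)\in\E_\hbar(W)$; one must ensure the resulting exponent $k$ can be chosen uniformly in the coefficients of the formal series rather than depending on the vector $w$, which is where one combines these axioms with the weak associativity. In the forward direction, one must simultaneously enlarge $k$ to accommodate both locality and weak associativity at level $n$, which is routine because both tolerances are controlled by the same $\hbar$-adic topology. With these uniformity checks in place, the classical proof of \cite{Li-nonlocal} transfers verbatim to the $\hbar$-adic setting.
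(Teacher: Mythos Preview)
The paper does not give a proof of this lemma; it simply cites \cite{Li-h-adic} (cf.\ \cite{Li-nonlocal}). Your proposal is correct and is precisely the standard argument found there: reduce modulo $\hbar^n$ to the classical locality--Jacobi equivalence for nonlocal vertex algebras over $\C$, then use separatedness to pass back. One small remark: in the reverse direction you do not actually need weak associativity---the uniformity of $k$ in $w$ follows directly from $Y(u,x_0)v\in V_\hbar((x_0))$, since the $x_0$-pole order modulo $\hbar^n$ is determined by $u,v,n$ alone and $Y_W$ is $\C[[\hbar]]$-linear.
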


The following was also proved in  \cite{Li-h-adic} (Prop. 2.25):

\begin{prop}\label{prop-2.25}
Let $V$ be an $\hbar$-adic nonlocal vertex algebra, let $(W,Y_W)$ be a $V$-module, and let
$$u,v, c^{(0)},c^{(1)},\dots \in V,\ A(x)\in V\widehat{\otimes}V\widehat{\otimes} \C((x))[[\hbar]]$$
with $\lim_{n\rightarrow \infty}c^{(n)}=0$. If
\begin{align*}
Y(u,x_1)Y(v,x_2)-Y(x_2,x_1)(A(x))
=\sum_{n\ge 0}Y(c^{(n)},x_2)\frac{1}{n!}\left(\frac{\partial}{\partial x_2}\right)^nx_1^{-1}\delta\!\left(\frac{x_2}{x_1}\right)
\end{align*}
on $V$, then
\begin{align*}
Y_W(u,x_1)Y_W(v,x_2)-Y_W(x_2,x_1)(A(x))
=\sum_{n\ge 0}Y_W(c^{(n)},x_2)\frac{1}{n!}\left(\frac{\partial}{\partial x_2}\right)^nx_1^{-1}\delta\!\left(\frac{x_2}{x_1}\right)\!.
\end{align*}
Furthermore, the converse is also true if $(W,Y_W)$ is faithful.
\end{prop}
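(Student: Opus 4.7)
The strategy is to reduce the hypothesis to an $\SY$-locality relation on $V$, identify the coefficients $c^{(m)}$ as iterates $u_m v$, transfer this locality to $W$ using weak associativity, and finally apply Lemma \ref{sim-commutator} in reverse on $W$ to recover the desired identity.

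First I would extract $\SY$-locality on $V$. Fix $n\in\Z_+$. Since $\lim_{m\to\infty} c^{(m)}=0$, only finitely many $c^{(m)}$ are nonzero modulo $\hbar^n V$; multiplying the hypothesis by $(x_1-x_2)^k$ for $k$ strictly larger than the maximal such index kills the right-hand side modulo $\hbar^n$, yielding
\begin{align*}
(x_1-x_2)^k\bigl[Y(u,x_1)Y(v,x_2) - Y(x_2,x_1)(A(x))\bigr] \equiv 0 \pmod{\hbar^n}.
\end{align*}
This is precisely $Y(u,x_1)Y(v,x_2)\sim Y(x_2,x_1)(A(x))$ on $V$. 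By Lemma \ref{sim-commutator} (applied with $V$ itself as module), the locality is equivalent to the $\SY$-Jacobi identity on $V$, whose residue in $x_0$ gives
\begin{align*}
Y(u,x_1)Y(v,x_2) - Y(x_2,x_1)(A(x)) = \sum_{m\geq 0} Y(u_m v, x_2)\tfrac{1}{m!}\partial_{x_2}^m x_1^{-1}\delta(x_2/x_1),
\end{align*}
using the identity $x_1^{-1}\delta((x_2+x_0)/x_1) = e^{x_0\partial_{x_2}} x_1^{-1}\delta(x_2/x_1)$. Comparing with the hypothesis and matching coefficients of the successive delta-function derivatives forces $c^{(m)}=u_m v$ in $V$ for every $m\geq 0$.

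Next I would transfer the $\SY$-locality from $V$ to $W$. For each $w\in W$ and $n\in\Z_+$, weak associativity on $W$ supplies some $l$ with
\begin{align*}
(x_0+x_2)^l Y_W(u,x_0+x_2)Y_W(v,x_2)w \equiv (x_2+x_0)^l Y_W(Y(u,x_0)v,x_2)w \pmod{\hbar^n},
\end{align*}
and a companion associativity relation, derived componentwise from $A(x)\in V\widehat{\otimes}V\widehat{\otimes}\C((x))[[\hbar]]$ via $\hbar$-adic continuity, handles $Y_W(x_2,x_1)(A(x))$. Feeding in the $V$-locality of the first step and choosing all constants in the correct order yields $Y_W(u,x_1)Y_W(v,x_2)\sim Y_W(x_2,x_1)(A(x))$. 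Lemma \ref{sim-commutator}, now applied on $W$, converts this locality into the $\SY$-commutator formula with iterate $Y_W(Y(u,x_0)v,x_2)$, whose expansion coincides with $\sum_{m\geq 0} Y_W(c^{(m)},x_2)\tfrac{1}{m!}\partial_{x_2}^m x_1^{-1}\delta(x_2/x_1)$ via the identification $c^{(m)}=u_m v$.

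For the converse, when $W$ is faithful the same extract-locality argument applied to the $W$-hypothesis produces $W$-locality and identifies $c^{(m)}$ as $u_m v$ in the image of $Y_W$; faithfulness lifts these equalities to $V$, and the $V$-identity reassembles from the commutator formula already derived. The principal obstacle I anticipate is the lifting of locality in the transfer step: weak associativity on $W$ is indexed by the acted-upon vector $w$, while $\SY$-locality on $V$ is a uniform operator statement. One must carefully orchestrate the $\hbar$-adic quantifiers---first choosing $k$ from $V$-locality, then $l(u,v,w,n,k)$ from $W$-associativity---so that the successive approximations mesh, and the general form of $A(x)$ requires a continuity argument based on the topological freeness guaranteed by Lemma \ref{tpfree-basics}.
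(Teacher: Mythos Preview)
The paper does not supply its own proof of this proposition; it merely records it as ``proved in \cite{Li-h-adic} (Prop.\ 2.25)'', so there is no in-paper argument to compare against. Your outline---extract $\SY$-locality on $V$ from the hypothesis, invoke Lemma~\ref{sim-commutator} on $V$ to identify $c^{(m)}=u_m v$ via the creation property, then transfer the locality to $W$ and apply Lemma~\ref{sim-commutator} again---is the natural route and is essentially how the result is established in the cited reference.

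One remark on the transfer step, which you correctly flag as the crux. You do not actually need to prove the operator-level relation $Y_W(u,x_1)Y_W(v,x_2)\sim Y_W(x_2,x_1)(A(x))$ with a $k$ uniform in $w$; it suffices to verify the $\SY$-Jacobi identity on $W$ vector by vector, because the conclusion of the proposition is itself an identity of operators that can be checked on each $w$. Once you know $c^{(m)}=u_m v$, the desired $W$-identity is exactly $\Res_{x_0}x_1^{-1}\delta((x_2+x_0)/x_1)Y_W(Y(u,x_0)v,x_2)$, and this follows from the $\SY$-Jacobi identity on $W$, which in turn is obtained by combining $\hbar$-adic weak associativity on $W$ (for each $w,n$) with the already-established $V$-locality, via the standard three-variable formal argument. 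This sidesteps the uniformity concern you raise at the end. The converse under faithfulness is then immediate, since equality of $Y_W(c^{(m)},x_2)$ and $Y_W(u_m v,x_2)$ lifts to $c^{(m)}=u_m v$ by injectivity of $Y_W$, and the $V$-identity is just the $\SY$-commutator formula there.
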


\subsection{Some technical results}
Here, we recall from \cite{KL1} the notions of strong ideal of an $\hbar$-adic nonlocal vertex algbera $V$,
strong submodule of a $V$-module $W$, relatively simple $\hbar$-adic nonlocal vertex algebra, and relatively simple module.
We also recall the results on Serre relation and a vertex-operator iterate formula.

\begin{de}
Let $V$ be an $\hbar$-adic nonlocal vertex algebra. An {\em $\hbar$-adic nonlocal vertex subalgebra} of $V$
is a $\C[[\hbar]]$-submodule $U$ such that $U$ itself is $\hbar$-adically complete,
\begin{eqnarray}\label{subalgebra-closure}
{\bf 1}\in U,\ \ u_mv\in U\quad \text{ for }u,v\in U,\ m\in \Z,
\end{eqnarray}
and $\lim_{m\rightarrow \infty}u_mv=0$ with respect to the $\hbar$-adic topology of $U$.
Furthermore, $U$ is called a {\em strong $\hbar$-adic nonlocal vertex subalgebra} if $U=[U]$ in $V$.
\end{de}

\begin{de}
Let $V$ be an $\hbar$-adic nonlocal vertex algebra. A {\em strong ideal} of $V$ is a $\C[[\hbar]]$-submodule $J$ such that
$[J]=J$, $J$ is $\hbar$-adically complete, and
\begin{align}
v_m J\subset J,\ \ a_m V\subset J\quad \text{ for }v\in V,\ m\in \Z,\ a\in J.
\end{align}
An $\hbar$-adic nonlocal vertex algebra $V$ is said to be {\em relatively simple}
if $V$ is its only nonzero strong ideal.
\end{de}

%\begin{rem}
%{\em Let $V=V^{0}[[\hbar]]$ be an $\hbar$-adic nonlocal vertex algebra.
%For $n\in \N$, $\hbar^nV=(\hbar^nV^0)[[\hbar]]$ is topologically free and $[\hbar^nV]=V$.
%Thus, $\hbar^nV$ is an ideal, but not a strong ideal if $n\ge 1$ and $V\ne 0$.}
%\end{rem}

\begin{lem}\label{kernal-strong-ideal}
Let $\psi: V\rightarrow K$ be a homomorphism of $\hbar$-adic nonlocal vertex algebras.
Then $\ker (\psi)$ is a strong ideal of $V$.
\end{lem}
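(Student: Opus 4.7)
The plan is to verify the conditions in the definition of strong ideal for $J:=\ker(\psi)$ one by one. Two of the conditions, namely $[J]=J$ and $\hbar$-adic completeness of $J$, are not specific to vertex algebra structure but come from general $\C[[\hbar]]$-module considerations. In fact, since $V$ and $K$ are both topologically free $\C[[\hbar]]$-modules (as part of being $\hbar$-adic nonlocal vertex algebras) and $\psi$ is a $\C[[\hbar]]$-module map, Lemma \ref{simple-fact-3} applies directly and gives $[\ker(\psi)]=\ker(\psi)$ together with the fact that $\ker(\psi)$ is topologically free; in particular, $\ker(\psi)$ is $\hbar$-adically complete. So the first and second defining conditions of strong ideal come for free.

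It remains to check the closure conditions $v_m a \in J$ and $a_m v \in J$ for all $v \in V$, $a \in J$, $m \in \Z$. Since $\psi$ is a homomorphism of $\hbar$-adic nonlocal vertex algebras, it intertwines the vertex operator maps, i.e.
\[
\psi\bigl(Y_V(u,x)w\bigr)=Y_K(\psi(u),x)\psi(w) \quad \text{for all } u,w \in V.
\]
Taking the coefficient of $x^{-m-1}$ yields $\psi(u_m w)=\psi(u)_m\psi(w)$ for each $m\in\Z$. Applying this with $u=v$, $w=a$ gives $\psi(v_m a)=\psi(v)_m\psi(a)=0$, so $v_m a\in J$; and applying it with $u=a$, $w=v$ gives $\psi(a_m v)=\psi(a)_m\psi(v)=0$, so $a_m v\in J$.

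Since all four defining conditions of a strong ideal are established, $J=\ker(\psi)$ is a strong ideal of $V$. There is no real obstacle here; the only slightly nontrivial point is invoking Lemma \ref{simple-fact-3} to ensure the $\hbar$-adic completeness and the equality $[J]=J$, which relies on the fact that a homomorphism of $\hbar$-adic nonlocal vertex algebras is by definition a $\C[[\hbar]]$-module map between topologically free modules.
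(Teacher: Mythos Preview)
Your proof is correct. The paper itself does not include a proof of this lemma (it is recalled from \cite{KL1} without argument), but your verification of the four defining conditions of a strong ideal---invoking Lemma~\ref{simple-fact-3} for $[J]=J$ and $\hbar$-adic completeness, and using the intertwining property of $\psi$ for the two closure conditions---is exactly the standard route and matches what one expects the argument in \cite{KL1} to be.
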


\begin{lem}\label{strong-ideal}
Let $J$ be a strong ideal of an $\hbar$-adic nonlocal vertex algebra $V$.
Then the $\C[[\hbar]]$-module $V/J$ is topologically free and it is an $\hbar$-adic nonlocal vertex algebra.
\end{lem}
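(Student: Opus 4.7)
The plan is to prove topological freeness first via Lemma \ref{strong-submodule}, then transport the vertex operator map to the quotient, and finally verify the weak associativity by reducing modulo $\hbar^n$ and appealing to the proposition following Definition \ref{de:h-adic-nonlocal-va}.

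For topological freeness, I would invoke Lemma \ref{strong-submodule} directly: since $[J] = J$ and $J$ is $\hbar$-adically complete, it supplies $\C$-subspaces $J_0 \subset V_0 \subset V$ with $V = V_0[[\hbar]]$ and $J = J_0[[\hbar]]$, from which $V/J \cong (V_0/J_0)[[\hbar]]$ is topologically free.

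Next I would define
$$\bar{Y}(u + J,\, x)(v + J) := Y(u,x)v + J[[x, x^{-1}]] \quad \text{for } u, v \in V.$$
The strong ideal conditions $v_m J \subset J$ and $a_m V \subset J$ (for $v \in V$, $a \in J$, $m \in \Z$) show this is well-defined and $\C[[\hbar]]$-linear. The vacuum and creation properties pass to the quotient immediately. Since $Y(u,x)v \in V_\hbar((x))$ and the projection $V \to V/J$ is continuous, it follows that $\bar{Y}(\bar u, x)\bar v \in (V/J)_\hbar((x))$.

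For weak associativity, I would apply the criterion that $(V/J, \bar{Y}, \vac + J)$ is an $\hbar$-adic nonlocal vertex algebra if and only if, for every $n \geq 1$, $((V/J)/\hbar^n(V/J), \overline{\bar{Y}}_n, \vac + J)$ is a nonlocal vertex algebra over $\C$. Observe that $\hbar^n(V/J) = (\hbar^n V + J)/J$, so $(V/J)/\hbar^n(V/J) \cong V/(\hbar^n V + J)$. The hypothesis $[J] = J$ combined with Lemma \ref{[U]} gives $J \cap \hbar^n V = \hbar^n J$, so the image of $J$ inside the $\C$-nonlocal vertex algebra $V/\hbar^n V$ is isomorphic to $J/\hbar^n J$; the strong ideal conditions then make it a nonlocal vertex algebra ideal. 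Hence $V/(\hbar^n V + J) \cong (V/\hbar^n V)/(J/\hbar^n J)$ inherits a $\C$-nonlocal vertex algebra structure that agrees with the one induced by $\bar{Y}$, completing the verification.

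The main obstacle is ensuring that the two operations—quotienting by $J$ and reducing modulo $\hbar^n$—commute so that the $\C$-level nonlocal vertex algebra structure assembles into an $\hbar$-adic structure on $V/J$. This commutation is exactly what the twin hypotheses $[J] = J$ and $\hbar$-adic completeness of $J$ are designed to provide, through Lemmas \ref{[U]} and \ref{strong-submodule}; without them, torsion phenomena in $V/J$ could obstruct both topological freeness and the well-definedness of the quotient $\hbar$-adic structure.
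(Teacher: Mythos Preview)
The paper does not actually supply a proof of this lemma; it is one of several technical results in Section~2.3 that are recalled without proof from \cite{KL1}. Your argument is correct and follows exactly the natural line one would expect: invoke Lemma~\ref{strong-submodule} for topological freeness, check well-definedness of the quotient vertex operator map using the two-sided ideal conditions, and then verify the $\hbar$-adic nonlocal vertex algebra axioms level by level via the characterization proposition immediately following Definition~\ref{de:h-adic-nonlocal-va}, using Lemma~\ref{[U]} to identify $(V/J)/\hbar^n(V/J)$ with the quotient of $V/\hbar^n V$ by the image $J/\hbar^n J$.
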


\begin{lem}\label{strong-ideal-by-T}
Let $T$ be a subset of an $\hbar$-adic nonlocal vertex algebra.
Denote by $(T)$ the intersection of all strong ideals of $V$ containing $T$.
Then $(T)$ is the smallest strong ideal containing $T$.
\end{lem}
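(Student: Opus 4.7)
The plan is to verify directly that $(T)$ satisfies each of the defining properties of a strong ideal; the minimality statement will then follow tautologically from the construction as an intersection. The three properties to check are: (i) $[(T)]=(T)$, (ii) $(T)$ is $\hbar$-adically complete, and (iii) $v_m(T)\subset (T)$ and $a_mV\subset (T)$ for $v\in V$, $m\in\Z$, $a\in (T)$.

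First I would observe that the collection of strong ideals of $V$ containing $T$ is non-empty, since $V$ itself is such a strong ideal (trivially $[V]=V$, $V$ is $\hbar$-adically complete by the standing assumption, and the ideal conditions are vacuous). Hence $(T)$ is well-defined as a $\C[[\hbar]]$-submodule containing $T$. The algebraic conditions transfer directly from each factor: if $\hbar^n w\in (T)$ for some $n\in\N$, then $\hbar^n w\in J$ for every strong ideal $J\supset T$, and $[J]=J$ gives $w\in J$ for every such $J$, whence $w\in (T)$; this shows (i). Similarly, for $a\in (T)$, $v\in V$, $m\in\Z$, we have $a\in J$ for every such $J$, so $v_m a\in J$ and $a_m v\in J$ for every such $J$, giving (iii).

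The one nontrivial point is the $\hbar$-adic completeness of $(T)$. The key observation is that every strong ideal $J$ is $\hbar$-adically closed in $V$. Indeed, by Lemma \ref{[U]} applied to $[J]=J$, we have $J\cap\hbar^n V=\hbar^n J$ for every $n\in\N$, so the subspace topology on $J$ inherited from $V$ coincides with the intrinsic $\hbar$-adic topology of $J$. Since $J$ is $\hbar$-adically complete by assumption and $V$ is Hausdorff (being topologically free, hence separated by Lemma \ref{tpfree-basics}), standard topology gives that $J$ is closed in $V$. The intersection $(T)$ is therefore a closed $\C[[\hbar]]$-submodule of the complete Hausdorff space $V$: any $\hbar$-adic Cauchy sequence in $(T)$ converges in $V$, and the limit, lying in the closed subset $(T)$, again belongs to $(T)$. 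Thus $(T)$ is $\hbar$-adically complete, completing the verification that $(T)$ is a strong ideal.

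Finally, minimality is immediate from the definition: any strong ideal $J'$ containing $T$ appears as one of the factors in the intersection, so $(T)\subset J'$. The main obstacle is exactly the completeness step — everything else is formal bookkeeping — and it hinges on recognizing that the condition $[J]=J$ is precisely what guarantees, via Lemma \ref{[U]}, that the subspace topology and the intrinsic $\hbar$-adic topology on $J$ agree, so that completeness of $J$ upgrades to closedness of $J$ inside $V$.
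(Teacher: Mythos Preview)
Your proof is correct. The paper itself does not supply a proof of this lemma; it is simply recalled from \cite{KL1}, so there is no ``paper's own proof'' to compare against here.

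One small point worth making explicit in your completeness argument: having shown that $(T)$ is closed in $V$ and that an $\hbar$-adic Cauchy sequence in $(T)$ has a limit $w\in(T)$, you still need that $w_n\to w$ in the \emph{intrinsic} $\hbar$-adic topology of $(T)$, not merely in $V$. This follows because you have already established $[(T)]=(T)$ in step (i), so Lemma~\ref{[U]} gives $(T)\cap\hbar^nV=\hbar^n(T)$; hence convergence in $V$ with all terms in $(T)$ is convergence in $(T)$. You invoke exactly this principle in your closing paragraph for the individual $J$'s, and it applies equally to $(T)$ itself --- so the argument is complete, just slightly implicit on this last link.
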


\begin{prop}\label{simple-hva}
Let $V$ be an $\hbar$-adic nonlocal vertex algebra such that $V/\hbar V$ is a simple nonlocal vertex algebra over $\C$.
Then $V$ is relatively simple.
\end{prop}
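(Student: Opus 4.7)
My plan is to show directly that any nonzero strong ideal $J$ of $V$ must equal $V$, by analyzing the image of $J$ in the quotient $V/\hbar V$.

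First I would observe that $J$ is not contained in $\hbar V$. Indeed, if $J\subset \hbar V$, then for any $j\in J$, torsion-freeness of $V$ gives a unique $v\in V$ with $j=\hbar v$; since $\hbar v\in J$, the condition $[J]=J$ forces $v\in J$. Hence $J\subset \hbar J$, and iterating yields $J\subset \bigcap_{n\ge 1}\hbar^n V$, which is zero by separatedness. So $J\ne 0$ implies $J\not\subset \hbar V$.

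Next I would pass to the quotient. Since $[J]=J$, Lemma \ref{[U]} gives $J\cap \hbar V=\hbar J$, so the natural map $J/\hbar J \to V/\hbar V$ is injective, with image $\bar{J}:=(J+\hbar V)/\hbar V$, which is nonzero by the previous step. The ideal conditions $v_m J\subset J$ and $a_m V\subset J$ for $v\in V$, $a\in J$, $m\in\Z$ descend to say that $\bar{J}$ is a (nonlocal vertex algebra) ideal of $V/\hbar V$; since $J$ is stable under $\D$ via $v_{-2}\vac$ and contains no unit issue (simple ideal check), it is a genuine ideal in the sense used for simplicity. By hypothesis $V/\hbar V$ is simple, so $\bar{J}=V/\hbar V$, which is the assertion $V=J+\hbar V$.

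Finally I would bootstrap from $V=J+\hbar V$ to $V=J$ using the $\hbar$-adic completeness of $J$. Multiplying $V=J+\hbar V$ by $\hbar$ and substituting gives $V=J+\hbar^n V$ for every $n\ge 1$. Given $v\in V$, an inductive selection produces a sequence $j_n\in J$ with $v-j_n\in \hbar^n V$ and $j_{n+1}-j_n\in \hbar^n J$; this sequence is Cauchy in the $\hbar$-adic topology of $J$, and its limit $j\in J$ (which exists by completeness of $J$) satisfies $v-j\in \bigcap_n \hbar^n V=0$. Hence $v\in J$, proving $V=J$.

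The only point that requires a little care is verifying that $\bar{J}$ really is an ideal of the nonlocal vertex algebra $V/\hbar V$ in the sense meant when one speaks of simplicity; but this is immediate from the strong-ideal conditions on $J$ (stability under all $v_m$ from the left, and $a_m V\subset J$ for $a\in J$) after reducing modulo $\hbar$, together with the fact (Lemma \ref{strong-ideal}) that $V/J$ is itself a well-behaved $\hbar$-adic nonlocal vertex algebra. No substantive obstacle arises.
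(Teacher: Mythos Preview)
Your argument is correct and is the natural one: use $[J]=J$ to see that a nonzero strong ideal has nonzero image in $V/\hbar V$, invoke simplicity there to get $V=J+\hbar V$, and then use $\hbar$-adic completeness of $J$ (together with $J\cap\hbar^n V=\hbar^n J$ from Lemma~\ref{[U]}) to upgrade this to $V=J$. The paper does not actually prove this proposition here---it is recalled from \cite{KL1}---so there is no in-text proof to compare against, but your reasoning matches the standard argument and each step is sound; the minor aside about $\D$-stability and Lemma~\ref{strong-ideal} is unnecessary, since the two closure conditions $v_mJ\subset J$ and $a_mV\subset J$ already descend to make $\bar J$ a two-sided ideal of $V/\hbar V$.
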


Let $W$ be a module for an $\hbar$-adic nonlocal vertex algebra $V$.
A  {\em strong $V$-submodule} of $W$ is a $\C[[\hbar]]$-submodule
$U$ such that $[U]=U$, $U$ is $\hbar$-adically complete, and
\begin{align}
a_m u\subset U\quad \text{ for }a\in V,\ m\in \Z,\ u\in U.
\end{align}
A $V$-module $W$ is said to be {\em relatively simple (irreducible)} if
 $W$ is its only nonzero strong $V$-submodule.

\begin{lem}\label{hva-quotient-module}
Let $W$ be a module for an $\hbar$-adic nonlocal vertex algebra $V$ and let $U$ be a strong $V$-submodule of $W$.
Then $W/U$ is topologically free and it is a $V$-module.
\end{lem}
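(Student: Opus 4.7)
The plan is to reduce both claims to the structural fact about quotients supplied by Lemma \ref{strong-submodule}, and then verify the module axioms by lifting everything to $W$ and using the $V$-stability of $U$.

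First, since $W$ is a topologically free $\C[[\hbar]]$-module and $U\subset W$ satisfies $[U]=U$ and is $\hbar$-adically complete, Lemma \ref{strong-submodule} gives $\C$-subspaces $U_0\subset W_0$ with $W=W_0[[\hbar]]$, $U=U_0[[\hbar]]$, and the natural map realizes the identification $W/U=(W_0/U_0)[[\hbar]]$. In particular $W/U$ is topologically free, which handles the first claim, and the canonical projection $\pi:W\to W/U$ sends $\hbar^nW$ into $\hbar^n(W/U)$ for every $n\in\N$.

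Next, define a $\C[[\hbar]]$-module map $\bar Y_W(\cdot,x):V\to (\End(W/U))[[x,x^{-1}]]$ by
\begin{align*}
\bar Y_W(v,x)(w+U)=Y_W(v,x)w+U[[x,x^{-1}]].
\end{align*}
This is well-defined, because the $V$-stability condition $a_mU\subset U$ for all $a\in V$, $m\in\Z$ ensures $Y_W(v,x)U\subset U[[x,x^{-1}]]$. Clearly $\bar Y_W(\vac,x)=1_{W/U}$. To see $\bar Y_W(v,x)(w+U)\in (W/U)_\hbar((x))$, fix $v\in V$, $w\in W$ and write $Y_W(v,x)w=\sum_{m\in\Z}v_mw\,x^{-m-1}\in W_\hbar((x))$. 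For any $n\in\Z_+$ there exists $k$ so that $v_mw\in\hbar^nW$ for $m\geq k$; applying $\pi$, we get $(v_mw+U)\in\hbar^n(W/U)$ for $m\geq k$, confirming that $\bar Y_W(v,x)(w+U)$ lies in $(W/U)_\hbar((x))$.

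Finally, $\hbar$-adic weak associativity for $\bar Y_W$ follows by applying $\pi$ (extended termwise to coefficients in $x_0^{\pm 1},x_2^{\pm 1}$) to the corresponding identity for $Y_W$: given $u,v\in V$, $w\in W$, $n\in\Z_+$, choose $l\in\N$ with
\begin{align*}
(x_0+x_2)^lY_W(u,x_0+x_2)Y_W(v,x_2)w\equiv (x_2+x_0)^lY_W(Y(u,x_0)v,x_2)w \pmod{\hbar^nW[[x_0^{\pm 1},x_2^{\pm 1}]]}.
\end{align*}
Since $\pi(\hbar^nW[[x_0^{\pm 1},x_2^{\pm 1}]])\subset\hbar^n(W/U)[[x_0^{\pm 1},x_2^{\pm 1}]]$, the same congruence holds after replacing $Y_W$ by $\bar Y_W$ and $w$ by $w+U$. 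The main obstacle is the bookkeeping check that the $\hbar$-adic completeness and truncation conditions are transported correctly from $W$ to $W/U$; once the identification $W/U=(W_0/U_0)[[\hbar]]$ is in hand, these checks are routine. Therefore $(W/U,\bar Y_W)$ is a $V$-module, completing the proof.
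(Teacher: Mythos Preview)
Your proof is correct. The paper does not actually supply a proof of this lemma; it is recalled from \cite{KL1} along with the surrounding notions of strong ideal and strong submodule, so there is nothing to compare against directly. Your argument is the natural one: invoke Lemma~\ref{strong-submodule} for topological freeness, then check the module axioms on the quotient by lifting to $W$ and using $V$-stability of $U$.
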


\begin{prop}\label{simple-hva-module}
Let $V$ be an $\hbar$-adic nonlocal vertex algebra and let $W$ be a $V$-module such that
$W/\hbar W$ is an irreducible $V/\hbar V$-module. Then $W$ is relatively simple.
\end{prop}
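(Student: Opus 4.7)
The plan is to show that an arbitrary nonzero strong $V$-submodule $U$ of $W$ must coincide with $W$, via a Nakayama-type $\hbar$-adic lifting argument that passes through $W/\hbar W$.

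First I would examine the image of $U$ in $W/\hbar W$. By the strong-submodule condition $[U] = U$ together with Lemma \ref{[U]}, one has $U \cap \hbar W = \hbar U$, so the canonical map $U/\hbar U \to W/\hbar W$ is injective. Applying Lemma \ref{strong-submodule} to $U \subset W$ (using also that $U$ is $\hbar$-adically complete), I obtain a presentation $W = W_0[[\hbar]]$, $U = U_0[[\hbar]]$ with $U_0 \subseteq W_0$ over $\C$, so $U \neq 0$ forces $U_0 \cong U/\hbar U \neq 0$. Moreover, the image of $U$ is stable under the $V/\hbar V$-action on $W/\hbar W$, since $a_m U \subset U$ for $a \in V,\ m \in \Z$ and the $V/\hbar V$-module structure on $W/\hbar W$ is obtained from $Y_W$ by reduction modulo $\hbar$. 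Hence the image is a nonzero $V/\hbar V$-submodule of $W/\hbar W$.

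By the irreducibility hypothesis on $W/\hbar W$, this forces $U + \hbar W = W$. Given any $w \in W$, I would then inductively construct sequences $u_0, u_1, \dots \in U$ and $w_1, w_2, \dots \in W$ by writing $w = u_0 + \hbar w_1$ and $w_n = u_n + \hbar w_{n+1}$ for $n \ge 1$, yielding
\[
w - \sum_{k=0}^{n} \hbar^k u_k \;=\; \hbar^{n+1} w_{n+1} \;\in\; \hbar^{n+1} W
\]
for every $n \ge 0$. Since $U$ is $\hbar$-adically complete, the partial sums $\sum_{k=0}^{n} \hbar^k u_k$ converge to some $u \in U$; and since $W$ is topologically free (hence separated), $w - u \in \bigcap_{n \ge 0} \hbar^n W = 0$, giving $w = u \in U$. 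Thus $U = W$.

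The only delicate point is verifying that the image of $U$ in $W/\hbar W$ is genuinely nonzero and $V/\hbar V$-stable; this is precisely why the definition of a strong submodule builds in both the saturation condition $[U] = U$ and the $\hbar$-adic completeness of $U$. Once those two ingredients are converted into the injectivity of $U/\hbar U \hookrightarrow W/\hbar W$ and the convergence of the lifted series, the conclusion is the standard $\hbar$-adic Nakayama iteration, and no further input from the nonlocal vertex algebra axioms beyond $a_m U \subset U$ is needed.
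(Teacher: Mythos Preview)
Your argument is correct. The paper itself does not supply a proof of this proposition here---it is recalled from \cite{KL1}---but your Nakayama-type lifting is exactly the expected argument: pass to $W/\hbar W$, use $[U]=U$ to see the image of $U$ is a nonzero $V/\hbar V$-submodule, invoke irreducibility to get $U+\hbar W=W$, and then iterate with $\hbar$-adic completeness of $U$ and separatedness of $W$ to conclude $U=W$.

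One minor streamlining: once you invoke Lemma~\ref{strong-submodule} to write $W=W_0[[\hbar]]$ and $U=U_0[[\hbar]]$ with $U_0\subset W_0$, the identification $W/\hbar W\cong W_0$ and $U/\hbar U\cong U_0$ together with irreducibility give $U_0=W_0$ directly, hence $U=W_0[[\hbar]]=W$, and the explicit series construction in your last paragraph becomes unnecessary. But your version is equally valid and makes the role of completeness more visible.
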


%\begin{rem}
%{\em Let $W$ be a module for an $\hbar$-adic nonlocal vertex algebra $V$.
%If $U$ is a $\C[[\hbar]]$-submodule of $W$ such that $\hbar W\subset U$ and
%$v_m U\subset U$ for $v\in V,\ m\in \Z$,
%then $U/\hbar W$ is a $(V/\hbar V)$-submodule of $W/\hbar W$.
%(Note that $[U]=W$ as $[\hbar W]=W$.) Furthermore, these are all $(V/\hbar V)$-submodules of $W/\hbar W$. [???]}
%\end{rem}

\begin{de}\label{def-Serre-relation}
Let $W$ be a topologically free $\C[[\hbar]]$-module and let $a(x),b(x)\in \E_{\hbar}(W)$.
We say the {\em order-$2$ Serre relation for $(a(z_1), a(z_2), b(w))$} holds if
\begin{align}
\sum_{\sigma\in S_2}\!
  \Big(a(z_{\sigma(1)})a(z_{\sigma(2)})b(w)
  -2a(z_{\sigma(1)})b(w)a(z_{\sigma(2)})
    +b(w)a(z_{\sigma(1)})a(z_{\sigma(2)})\Big)\!=0.
\end{align}
\end{de}

The following were obtained in  \cite{KL1}:

\begin{prop}\label{Serre-relation-equivalence-va}
Let $V$ be an $\hbar$-adic nonlocal vertex algebra with $a,b\in V$ such that
\begin{align}
&  (z-w-2\hbar)Y(a,z)Y(a,w)=(z-w+2\hbar)Y(a,w)Y(a,z),\label{thm-a-a-relation-prop}\\
&  (z-w+\hbar)Y(a,z)Y(b,w)=(z-w-\hbar)Y(b,w)Y(a,z).\label{thm-a-b-relation-prop}
\end{align}
Then the order-$2$ Serre relation for $(Y(a,z_1), Y(a,z_2), Y(b,w))$ holds if and only if
\begin{align}\label{singular-Y-a-b}
{\rm Sing}_z Y(a,z)a_0b=0.
\end{align}
\end{prop}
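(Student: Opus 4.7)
The plan is to reduce the order-$2$ Serre relation to the stated singular-part condition using the given exchange relations. First, I would extract mode recursions from the exchange relations. Comparing coefficients of $z^{-m-1} w^{-k-1}$ in $(z - w + \hbar) Y(a, z) Y(b, w) = (z - w - \hbar) Y(b, w) Y(a, z)$ gives $[a_{m+1}, b_k] - [a_m, b_{k+1}] = -\hbar \{a_m, b_k\}$; applying this to $\vac$ and using $a_n \vac = b_n \vac = 0$ for $n \ge 0$, a short induction on $n$ yields $a_n b = (-\hbar)^n a_0 b$ for all $n \ge 0$. Similarly, from the $a$-$a$ relation, $a_n a = (2\hbar)^n a_0 a$. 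Hence the singular parts of $Y(a, x_0) b$ and $Y(a, x_0) a$ are the $\iota_{x_0}$-expansions of $(x_0 + \hbar)^{-1}(a_0 b)$ and $(x_0 - 2\hbar)^{-1}(a_0 a)$, respectively.

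Feeding these into the $\SY$-commutator formula (the residue form of Proposition \ref{lem:q-Jacobi}) and using the identity $\sum_{n \ge 0}\frac{c^n}{n!}\, \partial_w^n z^{-1}\delta(w/z) = z^{-1}\delta((w + c)/z)$, I obtain the closed-form expressions
\begin{align*}
  Y(a, z) Y(b, w) - \iota_{w, z}\tfrac{w - z + \hbar}{w - z - \hbar}\, Y(b, w) Y(a, z) &= z^{-1}\delta\!\left(\tfrac{w - \hbar}{z}\right) Y(a_0 b, w),\\
  Y(a, z_1) Y(a, z_2) - \iota_{z_2, z_1}\tfrac{z_2 - z_1 - 2\hbar}{z_2 - z_1 + 2\hbar}\, Y(a, z_2) Y(a, z_1) &= z_1^{-1}\delta\!\left(\tfrac{z_2 + 2\hbar}{z_1}\right) Y(a_0 a, z_2).
\end{align*}

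Rewriting the Serre expression as $\Phi(z_1, z_2, w) = \sum_{\sigma \in S_2} [Y(a, z_{\sigma(1)}), [Y(a, z_{\sigma(2)}), Y(b, w)]]$, I substitute the first identity into the inner commutator, producing a ``bulk'' piece proportional to $Y(b, w) Y(a, z_{\sigma(2)})$ together with a ``delta'' piece involving $Y(a_0 b, w)$. The bulk piece cancels after symmetrization by virtue of the $a$-$a$ exchange relation, or equivalently its consequence $(z_1 - z_2)[Y(a, z_1), Y(a, z_2)] = 2\hbar\{Y(a, z_1), Y(a, z_2)\}$. For the delta piece, I invoke the vertex-operator iterate formula from \cite{KL1} to express $[Y(a, z_1),\, z_2^{-1}\delta((w - \hbar)/z_2) Y(a_0 b, w)]$ in terms of the iterate $Y(Y(a, x_0)(a_0 b), w)$, whose singular-in-$x_0$ part is $\Sing_{x_0} Y(a, x_0)(a_0 b)$.

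After these reductions, $\Phi$ equals a nonzero delta-function factor times $\Sing_z Y(a, z)(a_0 b)$, so the desired equivalence follows by linear independence of the relevant delta distributions. The hardest step is this final reduction: tracking the expansion conventions and delta-function supports (which involve $\hbar$-shifts) carefully, and verifying that the contributions involving $Y(a_0 a, w)$ coming from the $a$-$a$ $\SY$-commutator either cancel by the $S_2$-symmetrization or are absorbed, so that the final expression depends only on $\Sing_z Y(a, z)(a_0 b)$.
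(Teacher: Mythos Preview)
The paper does not prove this proposition here; it is quoted from \cite{KL1}. So there is no in-paper proof to compare against, and your proposal must be judged on its own.

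Your first two steps are sound. The mode recursion $a_{n+1}b=-\hbar\,a_nb$ (and $a_{n+1}a=2\hbar\,a_na$) for $n\ge 0$ follows exactly as you say by extracting the coefficient of $z^{-m-1}w^{-k-1}$ and applying to $\vac$ with $k=-1$, $m\ge 0$. Combined with Lemma~\ref{sim-commutator} and Lemma~\ref{lem-sing-fact}, this does give your two closed-form $\SY$-commutator identities.

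The gap is in the final reduction. Two issues:

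\emph{(i) The ``bulk cancellation'' is not as clean as you claim.} Writing $[A_{\sigma(2)},B]=f(w,z_{\sigma(2)})BA_{\sigma(2)}+\text{(delta)}$ with $f(w,z)=\iota_{w,z}\frac{2\hbar}{w-z-\hbar}$ and then taking $[A_{\sigma(1)},\,\cdot\,]$ produces three kinds of terms: $f(w,z_1)f(w,z_2)B\{A_1,A_2\}$, cross terms $f(w,z_{\sigma(2)})(\text{delta}_{\sigma(1)})A_{\sigma(2)}$, and $\big(f(w,z_2)-f(w,z_1)\big)B[A_1,A_2]$. None of these vanish individually from the $a$-$a$ exchange relation alone; you need to also bring in the $a$-$a$ $\SY$-commutator identity and track how the resulting $Y(a_0a,\cdot)$ terms interact with $B$, which you have not done.

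\emph{(ii) You have no $\SY$-locality relation between $a$ and $a_0b$.} To compute $[Y(a,z_1),Y(a_0b,w)]$ via an $\SY$-commutator you need such a relation, and it is not among the hypotheses. It can be derived (from the given relations plus weak associativity, since $a_0b$ is built from $a$ and $b$), but that derivation is itself a substantial step and is where the condition $\Sing_zY(a,z)a_0b=0$ finally enters. Invoking an unspecified ``vertex-operator iterate formula from \cite{KL1}'' does not discharge this; Proposition~\ref{Y-W-E(a,z)} concerns Heisenberg-type exponentials, not general iterates.

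In short: your setup is correct, but the proof lives entirely in the step you label ``hardest,'' and your outline does not yet contain a mechanism that makes the bulk and $a_0a$ contributions cancel.
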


\begin{prop}\label{Serre-relation-abstract}
Let $V$ be an $\hbar$-adic nonlocal vertex algebra and let $a,b\in V$. If
(\ref{thm-a-a-relation-prop}), (\ref{thm-a-b-relation-prop}, and
the order-$2$ Serre relation for $(Y(a,z_1), Y(a,z_2), Y(b,w))$ hold,
then for any $V$-module $(W,Y_W)$,
\begin{align}
&  (z-w-2\hbar)Y_W(a,z)Y_W(a,w)=(z-w+2\hbar)Y_W(a,w)Y_W(a,z),\label{thm-a-a-relation-module}\\
&  (z-w+\hbar)Y_W(a,z)Y_W(b,w)=(z-w-\hbar)Y_W(b,w)Y_W(a,z),\label{thm-a-b-relation-module}
\end{align}
and the order-$2$ Serre relation for $(Y_W(a,z_1), Y_W(a,z_2), Y_W(b,w))$ holds.
The converse is also true for any  faithful $V$-module $(W,Y_W)$.
\end{prop}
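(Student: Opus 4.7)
Handling the two locality relations is the more routine half. Each of (\ref{thm-a-a-relation-prop}) and (\ref{thm-a-b-relation-prop}) is equivalent to an $\SY$-locality on $V$,
\[
Y(a,x_1)Y(a,x_2)\sim Y(x_2,x_1)(F_{aa}(x)),\quad Y(a,x_1)Y(b,x_2)\sim Y(x_2,x_1)(F_{ab}(x)),
\]
with $F_{aa}(x)=\tfrac{x-2\hbar}{x+2\hbar}(a\ot a)$ and $F_{ab}(x)=\tfrac{x+\hbar}{x-\hbar}(b\ot a)$, both in $V\wh\ot V\wh\ot\C((x))[[\hbar]]$. By Lemma \ref{sim-commutator} these $\sim$-relations are equivalent to $\SY$-Jacobi identities on $V$; taking $\Res_{x_0}$ puts each in the form
\[
Y(u,x_1)Y(v,x_2)-Y(x_2,x_1)(F(x))=\sum_{n\ge 0}Y(u_nv,x_2)\tfrac{1}{n!}\partial_{x_2}^n\,x_1\inv\delta(x_2/x_1)
\]
demanded by Proposition \ref{prop-2.25} (note $u_nv\to 0$ since $Y(u,x_0)v\in V_\hbar((x_0))$). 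That proposition transfers the identity to $W$, and Lemma \ref{sim-commutator} read backward returns the $\SY$-locality on $W$. Multiplying the $W$-side commutator formula by $(x_1-x_2-2\hbar)$, resp.\ $(x_1-x_2+\hbar)$, collapses the delta terms into $Y_W(\cdot,x_2)$ applied to the combinations $a_{n+1}a-2\hbar\,a_na$, resp.\ $a_{n+1}b+\hbar\,a_nb$, which by the same manipulation on $V$ (where the exact hypotheses hold) are zero as elements of $V$; the exact identities (\ref{thm-a-a-relation-module}) and (\ref{thm-a-b-relation-module}) follow.

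For the Serre relation, Proposition \ref{Serre-relation-equivalence-va} applied to $V$ reduces the Serre relation there to the single-element condition $\Sing_zY(a,z)(a_0b)=0$ in $V$, i.e.\ $a_n(a_0b)=0$ for every $n\ge 0$. I would lift this to $W$ by $\hbar$-adic weak associativity: for any $w'\in W$ and $n\in\Z_+$, there exists $l\in\N$ with
\[
(z+x_2)^l\,Y_W(a,z+x_2)Y_W(a_0b,x_2)w'\equiv (x_2+z)^l\,Y_W(Y(a,z)(a_0b),x_2)w'\pmod{\hbar^n}.
\]
Since $Y(a,z)(a_0b)$ has no singular $z$-part, the right-hand side has only nonnegative powers of $z$, which transfers the nonsingularity to the left. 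Armed with this module-level singular-vanishing condition and the already-transferred localities (\ref{thm-a-a-relation-module})--(\ref{thm-a-b-relation-module}), I would rerun the proof of Proposition \ref{Serre-relation-equivalence-va} entirely on the module side---that proof manipulates only $\SY$-locality, the vertex-operator iterate formula of \cite{KL1}, and weak associativity, all of which are available for $(W,Y_W)$---to conclude the order-$2$ Serre relation for $(Y_W(a,z_1),Y_W(a,z_2),Y_W(b,w))$.

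The converse for faithful $(W,Y_W)$ runs the same arguments backward. Proposition \ref{prop-2.25} is biconditional in the faithful case, so the $W$-side commutator formulas pull back to $V$; extracting coefficients of $x_1^{-m-1}$ and invoking faithfulness forces $a_{n+1}a-2\hbar\,a_na=0$ and $a_{n+1}b+\hbar\,a_nb=0$ in $V$, yielding (\ref{thm-a-a-relation-prop}) and (\ref{thm-a-b-relation-prop}). The module Serre relation together with the module version of the equivalence (again by rerunning the argument of \cite{KL1} on $W$) gives the module singular-vanishing, which faithfulness pulls back to $\Sing_zY(a,z)(a_0b)=0$ in $V$, and Proposition \ref{Serre-relation-equivalence-va} then supplies Serre on $V$.

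The hardest point is the Serre transfer: neither Proposition \ref{Serre-relation-equivalence-va} nor \ref{prop-2.25} addresses three-point operator identities on modules directly, so one must re-execute the equivalence internally on $W$, and the $\hbar$-adic bookkeeping in the associativity step---in particular, choosing the exponent $l$ uniformly enough to distill a genuine nonsingularity statement from the mod-$\hbar^n$ congruences---is the delicate technical point.
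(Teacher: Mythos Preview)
This paper does not prove Proposition \ref{Serre-relation-abstract}; it is merely recalled from \cite{KL1} (see the sentence ``The following were obtained in \cite{KL1}'' immediately preceding Propositions \ref{Serre-relation-equivalence-va} and \ref{Serre-relation-abstract}). So there is no proof here to compare your proposal against.

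That said, your plan is reasonable and likely close in spirit to what \cite{KL1} does. The transfer of the two locality relations via Lemma \ref{sim-commutator} and Proposition \ref{prop-2.25} is standard and correct. Your reduction of the Serre transfer to the single-element condition $\Sing_zY(a,z)(a_0b)=0$ via Proposition \ref{Serre-relation-equivalence-va}, followed by an $\hbar$-adic weak-associativity argument to get the corresponding module regularity, is the natural strategy. The one point that is not fully secured in your outline is the claim that the proof of Proposition \ref{Serre-relation-equivalence-va} ``manipulates only $\SY$-locality, the vertex-operator iterate formula of \cite{KL1}, and weak associativity''---you are asserting, without access to that proof, that no genuinely vertex-algebra-specific ingredient (vacuum, creation property, skew-symmetry) enters. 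If that is so, your rerun works; if not, you would need to find a direct three-operator transfer principle. You have correctly flagged this as the delicate step, but it remains an assumption about the contents of \cite{KL1} rather than something your proposal independently establishes.
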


Set
\begin{align}\label{L-def}
L(x)=\sum_{n\ge 1}\frac{1}{n!}x^{n-1}=\frac{e^{x}-1}{x}\in \C[[x]].
\end{align}

\begin{prop}\label{Y-W-E(a,z)}
Let $V$ be an $\hbar$-adic nonlocal vertex algebra, $(W,Y_W)$ a $V$-module.
Assume $a\in V$ such that
\begin{eqnarray}\label{a-conditions}
[Y^{\pm}(a,x_1),Y^{\pm}(a,x_2)]=0, \quad
[Y^{-}(a,x_1),Y^{+}(a,x_2)]=\gamma(x_1-x_2),
\end{eqnarray}
where $Y^{+}(a,x)$ and $Y^{-}(a,x)$ denote the regular and singular parts of $Y(a,x)$, respectively,
and where $\gamma(x)\in x^{-1} \C[x^{-1}][[\hbar]]$.  Let $z\in \hbar \C[[\hbar]]$ and set
$$E^{\pm }(a,z)=\exp\! \(\sum_{n\in \pm \Z_{+}}\frac{1}{n}a_{n}z^{-n} \)$$
on $V$ and $W$, where $Y(a,x)=\sum_{n\in \Z}a_nx^{-n-1}$. Then
\begin{align}
&Y_W(E^{-}(-a,z){\bf 1},x)=\exp\! \(zL(z\partial_x)Y_W^{+}(a,x)\)\exp\! \(zL(z\partial_x)Y_W^{-}(a,x)\)\\
&\quad = \!\(1+\frac{z}{x}\)^{a_0}E^{-}(-a,x+z)E^{-}(a,x)E^{+}(-a,x+z)E^{+}(a,x). \nonumber
\end{align}
\end{prop}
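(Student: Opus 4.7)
The plan is to verify both equalities on $V$ itself (viewed as the regular module over itself), and then transfer them to a general $V$-module $W$ via Proposition \ref{prop-2.25}: both sides of either equality depend only on $Y^{\pm}(a,x)$ and $a_0$, and the commutation relations (\ref{a-conditions}) themselves lift from $V$ to $W$ via weak associativity (cf.\ Lemma \ref{sim-commutator}), so it suffices to work on $V$.

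For the first equality on $V$, since $z\in\hbar\C[[\hbar]]$ every exponential converges $\hbar$-adically.  Using $a_{-n}\vac=\tfrac{1}{(n-1)!}\D^{n-1}a$ together with the translation identity $Y(\D v,x)=\partial_xY(v,x)$, one sees that
\begin{align*}
Y\!\Bigl(\sum_{n\ge 1}\tfrac{a_{-n}}{n}z^n\vac,\,x\Bigr)
=zL(z\partial_x)Y(a,x)
=A^{+}(z,x)+A^{-}(z,x),
\end{align*}
where $A^{\pm}(z,x):=zL(z\partial_x)Y^{\pm}(a,x)$.  To upgrade this linear statement to the exponentiated identity $Y(E^{-}(-a,z)\vac,x)=\exp(A^{+}(z,x))\exp(A^{-}(z,x))$, I would first verify agreement at $\vac$: the creation property yields $e^{x\D}E^{-}(-a,z)\vac$ on the left, while $Y^{-}(a,x)\vac=0$ forces $\exp(A^{-}(z,x))\vac=\vac$, and a direct summation of $A^{+}(z,x)\vac$ using commutativity of the $a_{-m}$'s reproduces $e^{x\D}E^{-}(-a,z)\vac$ on the right.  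The upgrade from agreement-at-$\vac$ to a full operator identity on $V$ is then carried out by induction on the length of iterated products $a_{-n_1}\cdots a_{-n_k}\vac$, using weak associativity applied to each $a$-mode in turn, the commutativity of the $a_{-m}$'s, and the scalar-valuedness of the cross commutators provided by (\ref{a-conditions}) (which makes the Baker--Campbell--Hausdorff bracket close at first order).

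For the second equality, summing the series $A^{\pm}(z,x)$ explicitly gives
\begin{align*}
A^{+}(z,x)&=\sum_{m\ge 1}\tfrac{(x+z)^m-x^m}{m}\,a_{-m},\\
A^{-}(z,x)&=a_0\log\!\bigl(1+\tfrac{z}{x}\bigr)+\sum_{m\ge 1}\tfrac{a_m}{m}\!\bigl(\tfrac{1}{x^m}-\tfrac{1}{(x+z)^m}\bigr).
\end{align*}
Since the families $\{a_{-m}:m\ge 1\}$ and $\{a_m:m\ge 0\}$ each commute internally (from the hypothesis $[Y^{\pm}(a,x_1),Y^{\pm}(a,x_2)]=0$), exponentiation yields
\begin{align*}
\exp(A^{+}(z,x))=E^{-}(-a,x+z)E^{-}(a,x), \quad
\exp(A^{-}(z,x))=(1+z/x)^{a_0}E^{+}(a,x)E^{+}(-a,x+z).
\end{align*}
The final rearrangement into the stated product form $(1+z/x)^{a_0}E^{-}(-a,x+z)E^{-}(a,x)E^{+}(-a,x+z)E^{+}(a,x)$ is Baker--Campbell--Hausdorff bookkeeping, using that all remaining cross commutators $[a_m,a_{-m'}]$ and $[a_0,a_{-m}]$, being extracted from $\gamma$, are scalar and hence central. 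The main technical obstacle is the normal-ordering upgrade in the first equality — promoting agreement-at-$\vac$ to a full operator identity on $V$ — which requires careful $\hbar$-adic bookkeeping of which identities hold exactly and which only modulo weak-associativity error terms; the scalar-valuedness of all relevant commutators is what ultimately makes the induction close.
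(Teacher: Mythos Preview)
The paper does not actually prove this proposition here; it is recalled from the companion paper \cite{KL1} (together with the Serre-relation results immediately preceding it).  So there is no in-paper argument to compare against, and I can only comment on your sketch on its merits.

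Your treatment of the \emph{second} equality is essentially right: the identities $A^{+}(z,x)=\sum_{m\ge1}\frac{(x+z)^m-x^m}{m}a_{-m}$ and $A^{-}(z,x)=a_0\log(1+z/x)+\sum_{m\ge1}\frac{a_m}{m}\bigl(x^{-m}-(x+z)^{-m}\bigr)$ are correct, and the internal commutativity of $\{a_{-m}\}$ and of $\{a_m:m\ge0\}$ lets you split each exponential as claimed.  One caution you wave away as ``BCH bookkeeping'': moving $(1+z/x)^{a_0}$ to the left past the $E^{-}$-factors uses $[a_0,a_{-m}]$, which is not automatically zero---it equals the coefficient of $x^{-1}$ in $\gamma(x)$.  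You should either note that this coefficient vanishes (as it does in every application in the paper, where $\gamma(x)=x^{-2}+(x+2\hbar)^{-2}$), or track the resulting scalar explicitly.

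The genuine gap is in the \emph{first} equality and in the transfer to $W$.  You propose to prove the identity on $V$ and then invoke Proposition~\ref{prop-2.25}, but that proposition transfers $\SY$-commutator relations of the form $Y_W(u,x_1)Y_W(v,x_2)-Y_W(x_2,x_1)(A)=\sum Y_W(c^{(n)},x_2)\partial^n\delta$; it does not transfer statements of the shape ``$Y_W(\text{specific vector},x)=$ explicit expression in $Y_W(a,x)$''.  What is actually needed is the $\hbar$-adic weak associativity (equivalently, an iterate formula) applied directly on $W$: one computes $Y_W(a_{-n_1}\cdots a_{-n_k}\vac,x)$ inductively in $k$ as a normal-ordered product of derivatives of $Y_W(a,x)$, using that all commutators $[a_m,a_{-m'}]$ are scalar so that the normal ordering closes.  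This is exactly the step you flag as ``the main technical obstacle'' and then leave unproved; your ``agreement at $\vac$ plus induction on product length'' outline is the right shape, but the mechanism that makes the induction work is weak associativity on $W$, not Proposition~\ref{prop-2.25}, and the scalar-commutator hypothesis on $V$ must first be shown to hold for $Y_W^{\pm}(a,x)$ on $W$ as well (this does follow from Lemma~\ref{sim-commutator}/Proposition~\ref{prop-2.25}, as you note).
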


\section{Algebra $\wh{\mathcal{DY}}(A)$ and $\hbar$-adic weak quantum vertex algebras $\V_A(\ell)$}\label{sec:DY}

In this section, we  recall from \cite{KL1} the $\C[[\hbar]]$-algebra $\wh{\mathcal{DY}}(A)$ associated to a simply-laced GCM $A$
and the universal vacuum $\wh{\mathcal{DY}}(A)$-module $\V_A(\ell)$ of level $\ell$.

Let $A=[a_{i,j}]_{i,j\in I}$ be a simply-laced GCM.
Then $A$ is symmetric and
$$a_{i,j}\in \{-1, 0, 2\}\quad  \text{ for }i,j\in I.$$

\begin{de}
Denote by $\wh{\mathcal{DY}}(A)$ the associative $\C[[\hbar]]$-algebra,
generated by
\begin{align}\label{eq:DY-gen-set}
\set{H_{i,n},\, X_{i,n}^\pm}{i\in I,\,n\in\Z}\cup \{\kappa\},
\end{align}
subject to a set of relations written in terms of generating functions in $z$:
\begin{align}
  &H_i^{+}(z)= 1+2\hbar \sum_{n\ge 0}H_{i,n}z^{-n-1},\quad
   H_i^{-}(z)=%\(\frac{z+\hbar}{z-\hbar}\)^{\frac{H_{i,0}}{2}\pm \frac{H_{i,0}}{2}}
    1+2\hbar \sum_{n\ge 1}H_{i,-n}z^{n-1},\\
 &\hspace{2.5cm}X_i^\pm(z)=\sum_{n\in\Z}X_{i,n}^\pm z^{-n-1}.
\end{align}
In addition to that $\kappa$ and $H_{i,0}$ ($i\in I$) are central, the relations are with $i,j\in I$:
\begin{align*}
 % &\te{(DY1)}\quad \kappa\te{ and $H_{i,0}$ are central},\\
  &\te{(DY1)}\quad [H_i^\pm(z),H_j^\pm(w)]=0,\\
  &\te{(DY2)}\quad H_i^+(z)H_j^-(w)=H_j^-(w)H_i^+(z)
  \frac{(z-w-a_{i,j}\hbar-\kappa\hbar)(z-w+a_{i,j}\hbar+\kappa\hbar)}
    {(z-w+a_{i,j}\hbar-\kappa\hbar)(z-w-a_{i,j}\hbar+\kappa\hbar)},\\
  &\te{(DY3)}\quad H_i^+(z)X_j^\pm(w)=X_j^\pm(w)H_i^+(z)
    \(\frac{z-w+a_{i,j}\hbar\pm\half \kappa\hbar}{z-w-a_{i,j}\hbar\pm\half \kappa\hbar}\)^{\pm 1},\\
  &\te{(DY4)}\quad H_i^-(z)X_j^\pm(w)=X_j^\pm(w)H_i^-(z)
    \(\frac{w-z-a_{j,i}\hbar\pm\half k\hbar}{w-z+a_{j,i}\hbar\pm\half \kappa\hbar}\)^{\pm 1},\\
  &\te{(DY5)}\quad [X_i^+(z),X_j^-(w)]
    =\delta_{i,j}\frac{1}{2\hbar}
    \Bigg(\!
        H_i^+(w+\half \kappa\hbar)z\inv\delta\!\(\frac{w+\kappa\hbar}{z}\)\\
  &\qquad\qquad\qquad\qquad
    -   H_i^-(w-\half \kappa\hbar)z\inv\delta\!\(\frac{w-\kappa\hbar}{z}\)\!
    \Bigg)\!,\\
  &\te{(DY6)}\quad (z-w\mp a_{i,j}\hbar)X_i^\pm(z)X_j^\pm(w)
    =(z-w\pm a_{i,j}\hbar)X_j^\pm(w)X_i^\pm(z),\\
  &\te{(DY7)}\quad X_i^\pm(z)X_j^\pm(w)
    =%\(\frac{w-z\mp a_{i,j}\hbar}{w-z\pm a_{i,j}\hbar}\)
    X_j^\pm(w)X_i^\pm(z)\quad \text{if }a_{i,j}= 0,\\
  &\te{(DY8)}\quad\sum_{\sigma\in S_2}
  \Big(X_i^\pm(z_{\sigma(1)})X_i^\pm(z_{\sigma(2)})X_j^\pm(w)
  -2X_i^\pm(z_{\sigma(1)})X_j^\pm(w)X_i^\pm(z_{\sigma(2)})\\
  &\qquad\qquad
    +X_j^\pm(w)X_i^\pm(z_{\sigma(1)})X_i^\pm(z_{\sigma(2)})\Big)=0
    \quad \te{if }a_{i,j}=-1.
\end{align*}
\end{de}

A $\wh{\mathcal{DY}}(A)$-module $W$ is said to be \emph{restricted}  if
$W$ is a topologically free $\C[[\hbar]]$-module and if for every $i\in I,\ w\in W$,
\begin{align}
\lim_{n\rightarrow \infty}H_{i,n}w=0=\lim_{n\rightarrow \infty}X_{i,n}^{\pm}w.
\end{align}

\begin{de}\label{def-A}
Denote by $\widetilde{\mathcal{A}}$ the associative algebra over $\C$, generated by elements
$$\tilde{\kappa},\ \tilde{H}_{i,n},\ \tilde{X}_{i,n}^{\pm}\ \  (\text{where }i\in I,\ n\in \Z),$$
subject to the condition that $\tilde{\kappa}$ and $\tilde{H}_{i,0}$ for $i\in I$ are central.
\end{de}

Consider the $\C[[\hbar]]$-algebra $\widetilde{\mathcal{A}}[[\hbar]]$ and
formulate generating functions $\tilde{H}_i^{\pm}(z),\ \tilde{X}_i^{\pm}(z)$ for $i\in I$ in the same way
as for $H_i^{\pm}(z),\ X_i^{\pm}(z)$. Then $\tilde{H}_i^{+}(z)$ and $\tilde{H}_i^{-}(z)$ are invertible elements of
$\widetilde{\mathcal{A}}[[z^{-1}]][[\hbar]]$ and $\widetilde{\mathcal{A}}[[z]][[\hbar]]$, respectively.
A {\em restricted} $\widetilde{\mathcal{A}}[[\hbar]]$-module by definition is an  $\widetilde{\mathcal{A}}[[\hbar]]$-module $W$
which is a topologically free  $\C[[\hbar]]$-module such that
\begin{align}
\lim_{n\rightarrow \infty}\tilde{H}_{i,n}w=0=\lim_{n\rightarrow \infty}\tilde{X}_{i,n}^{\pm}w
\quad \text{ for }i\in I,\ w\in W.
\end{align}

Set $q=e^{\hbar}\in \C[[\hbar]]$, and introduce formal power series
\begin{align}
&G(x)=\frac{q^{x}-q^{-x}}{x}:=2\hbar\(\!1+\sum_{n\ge 1}\frac{\hbar^{2n}}{(2n+1)!}x^{2n}\!\)\in \hbar\C[[\hbar,x]],\\
&F(x)= \frac{1}{2\hbar}\(\!1+\sum_{n\ge 1}\frac{\hbar^{2n}}{(2n+1)!}x^{2n}\!\)^{-1}\in \hbar^{-1}\C[[x,\hbar]].
\end{align}

\begin{de}\label{new-operators}
Let $W$ be a restricted $\widetilde{\mathcal{A}}[[\hbar]]$-module. For $i\in I$, set
\begin{align}
  h_{i,\Y}^\pm(z) =\pm F(\partial_z) \log \tilde{H}_i^\pm\big(z\pm  \frac{1}{2}\tilde{\kappa}\hbar \big)\in \mathcal{A}[[z^{\mp 1}]][[\hbar]].
    \label{eq:def-h-x-0}
  \end{align}
 Furthermore, set
  \begin{align}
  &h_{i,\Y}(z)=h_{i,\Y}^+(z)+h_{i,\Y}^-(z),\\
  &x_{i,\Y}^+(z)=\tilde{X}_i^+(z),\label{eq:def-h-x-1}\\
  &x_{i,\Y}^-(z)=\tilde{X}_i^-(z-\tilde{\kappa}\hbar)\tilde{H}_i^+\big(z-\frac{1}{2} \tilde{\kappa}\hbar\big)\inv.\label{eq:def-h-x-2}
\end{align}
\end{de}

The following was proved in \cite{KL1}:

\begin{thm}\label{thm-main-DY}
Let $W$ be a restricted $\widetilde{\mathcal{A}}[[\hbar]]$-module.
For $i\in I$, set
\begin{align}
C_i(x)&=\tilde{H}^{-}_i(x-\frac{3}{2}\tilde{\kappa}\hbar)\tilde{H}_i^+(x-\frac{1}{2}\tilde{\kappa}\hbar)^{-1} \\
&=\exp\!\(-G(\partial_x)q^{-\tilde{\kappa}\partial_x}h_{i,\Y}^{-}(x)\)
\exp\!\(-G(\partial_x)q^{-\tilde{\kappa}\partial_x}h_{i,\Y}^{+}(x)\)\!,\nonumber
\end{align}
an element of $\E_{\hbar}(W)$.
Then $W$ is a restricted $\wh{\mathcal{DY}}(A)$-module with
$$\kappa=\tilde{\kappa}, \ H_{i,n}=\tilde{H}_{i,n},\ X_{i,n}^{\pm}=\tilde{X}_{i,n}^{\pm} \quad \text{ for }i\in I$$
if and only if the following relations hold for $i,j\in I$:
\begin{align*}
&[h_{i,\Y}(z),h_{j,\Y}(w)]
    =[a_{i,j}]_{q^{\partial_{w}}}
        [\kappa]_{q^{\partial_{w}}}\( (z-w+\hbar \kappa)^{-2}-(w-z+\hbar \kappa)^{-2}\)\!,\\
&[h_{i,\Y}(z),x_{j,\Y}^\pm(w)]
=\pm x_{j,\Y}^\pm(w)[a_{i,j}]_{q^{\partial_{w}}}\((z-w+\hbar \kappa)^{-1}+(w-z+\hbar \kappa)^{-1}\)\!,\\
&(z-w-a_{i,j}\hbar)x_{i,\Y}^\pm(z)x_{j,\Y}^\pm(w)=(z-w+a_{i,j}\hbar)x_{j,\Y}^\pm(w)x_{i,\Y}^\pm(z),\\
&x_{i,\Y}^\pm(z)x_{j,\Y}^\pm(w)=%\(\frac{w-z-a_{i,j}\hbar}{w-z+ a_{i,j}\hbar}\)
x_{j,\Y}^\pm(w)x_{i,\Y}^\pm(z)\ \ \text{ if } a_{i,j}= 0,\\
&x_{i,\Y}^+(z)x_{j,\Y}^-(w)  - \( \frac{w-z+a_{i,j}\hbar}{w-z-a_{i,j}\hbar}\)x_{j,\Y}^-(w)x_{i,\Y}^+(z)\nonumber\\
&\quad  =\delta_{i,j}\frac{1}{2\hbar}\!
\(\!z\inv\delta\!\(\frac{w}{z}\)-C_i(w)  z\inv\delta\!\(\frac{w-2\kappa\hbar}{z}\)\!\)\!,
\end{align*}
and the order-$2$ Serre relations for $(x_{i,\Y}^\pm(z_1), x_{i,\Y}^\pm(z_2),x_{j,\Y}^\pm(w))$ hold when $a_{i,j}=-1$.
\end{thm}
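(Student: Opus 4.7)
My plan is to prove the stated equivalence by translating each defining relation of $\wh{\mathcal{DY}}(A)$, viewed as an operator identity on $W$, into the corresponding identity among the new operators $h_{i,\Y}(z), x_{i,\Y}^\pm(z)$ from Definition~\ref{new-operators}, and conversely. The algebraic bridge is that $F(x)G(x)=1$ in $\C[[x,\hbar]]$, so $F(\partial_z)$ and $G(\partial_z)$ are mutually inverse $\C[[\hbar]]$-linear operators; combined with the defining formula $h_{i,\Y}^\pm(z)=\pm F(\partial_z)\log \tilde H_i^\pm(z\pm\tfrac12\tilde\kappa\hbar)$, this yields the inverse relation
\[
\log \tilde H_i^\pm\!\(z\pm\tfrac12\tilde\kappa\hbar\)=\pm G(\partial_z)\,h_{i,\Y}^\pm(z),
\]
so $\tilde H_i^\pm(z\pm\tfrac12\tilde\kappa\hbar)$ is recovered from $h_{i,\Y}^\pm(z)$ by exponentiation. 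This makes the entire substitution reversible, so it suffices to verify the forward direction for each relation.

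I first dispatch the easier relations. (DY1) is literally the pairwise commutativity of $\tilde H_i^\pm(z)$ with $\tilde H_j^\pm(w)$ within each sign-sector, which is equivalent to the corresponding commutativity of $h_{i,\Y}^\pm$ with $h_{j,\Y}^\pm$; this kills two of the four sign-sector commutators when expanding $[h_{i,\Y}(z),h_{j,\Y}(w)]$. For (DY6)--(DY7), the $x_{i,\Y}^+$ case is identical to that for $\tilde X^+$ since $x_{i,\Y}^+=\tilde X_i^+$; the $x_{i,\Y}^-$ case follows from (DY6)--(DY7) for $\tilde X^-$ together with (DY3)/(DY4) to commute the extra $\tilde H^+(\cdot)^{-1}$-factors past $\tilde X^-$, and the resulting scalar rational prefactors are seen to match on both sides.

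For the Cartan-type relation (DY2), the right-hand side is a scalar rational function in $z-w$, so the commutator $[\log \tilde H_i^+(z),\log \tilde H_j^-(w)]$ is central and, by the Baker--Campbell--Hausdorff formula in its central-commutator form, it equals the logarithm of the displayed ratio. Applying $F(\partial_z)F(\partial_w)$, shifting $z\mapsto z+\tfrac12\kappa\hbar$ and $w\mapsto w-\tfrac12\kappa\hbar$, and using $\partial_u\log(u+c\hbar)=(u+c\hbar)^{-1}$ together with the defining series of $F$, one identifies the resulting expression as $[a_{i,j}]_{q^{\partial_w}}[\kappa]_{q^{\partial_w}}\bigl((z-w+\hbar\kappa)^{-2}-(w-z+\hbar\kappa)^{-2}\bigr)$. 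The same log-then-$F(\partial)$ recipe applied to (DY3)--(DY4) yields the $[h_{i,\Y}(z), x_{j,\Y}^\pm(w)]$ relation.

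The main obstacle will be (DY5). One expands
\[
x_{i,\Y}^+(z)\,x_{j,\Y}^-(w) = \tilde X_i^+(z)\,\tilde X_j^-(w-\kappa\hbar)\,\tilde H_j^+\!\(w-\tfrac12\kappa\hbar\)^{-1},
\]
pushes $\tilde X_i^+(z)$ past $\tilde X_j^-(w-\kappa\hbar)$ using the commutator furnished by (DY5), and past $\tilde H_j^+(w-\tfrac12\kappa\hbar)^{-1}$ using (DY3); the $\tfrac12\kappa\hbar$-shifts built into the definition of $x_{j,\Y}^-$ cancel the $\pm\tfrac12\kappa\hbar$ terms in (DY3) and leave exactly the scalar factor $\tfrac{w-z+a_{i,j}\hbar}{w-z-a_{i,j}\hbar}$ in front of $x_{j,\Y}^-(w)x_{i,\Y}^+(z)$. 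The two delta contributions from (DY5), each multiplied on the right by $\tilde H_i^+(w-\tfrac12\kappa\hbar)^{-1}$, simplify: the first becomes $\tilde H_i^+(w-\tfrac12\kappa\hbar)\tilde H_i^+(w-\tfrac12\kappa\hbar)^{-1}\cdot z^{-1}\delta(w/z)=z^{-1}\delta(w/z)$, and the second becomes $\tilde H_i^-(w-\tfrac32\kappa\hbar)\tilde H_i^+(w-\tfrac12\kappa\hbar)^{-1}\cdot z^{-1}\delta((w-2\kappa\hbar)/z)=C_i(w)\,z^{-1}\delta((w-2\kappa\hbar)/z)$, exactly matching the target. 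Finally, the order-$2$ Serre identity for $x_{i,\Y}^+=\tilde X_i^+$ is (DY8) for $\tilde X_i^+$ verbatim; for $x_{i,\Y}^-$ one pulls the three $\tilde H_i^+(\cdot)^{-1}$-factors to the right past $\tilde X_i^-$ and $\tilde X_j^-$ using (DY4), producing scalar factors that are symmetric under $z_1\leftrightarrow z_2$ and hence factor out as a common prefactor of the Serre combination for $\tilde X^-$, which vanishes by (DY8). Each step reverses, giving the converse direction.
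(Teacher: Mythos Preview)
The paper does not give its own proof of this theorem: it is quoted from the companion paper \cite{KL1} (see the sentence ``The following was proved in \cite{KL1}:'' immediately preceding the statement). So there is no in-paper argument to compare against; I can only assess your sketch on its own merits.

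Most of your outline is on track, and the (DY5) computation is correct as written. There is, however, a genuine gap in your treatment of the order-$2$ Serre relation for $x_{i,\Y}^-$. You write that pulling the three $\tilde H^+(\cdot)^{-1}$ factors to the right produces ``scalar factors that are symmetric under $z_1\leftrightarrow z_2$ and hence factor out as a common prefactor.'' This is not what happens. The scalar obtained depends on the \emph{ordering} of the three operators, not merely on the multiset $\{z_1,z_2,w\}$. Writing $g_{ij}(u)=\frac{u+a_{ij}\hbar}{u-a_{ij}\hbar}$ for the factor produced by commuting $\tilde H_i^+(\,\cdot\,)^{-1}$ past $\tilde X_j^-(\,\cdot\,)$ via (DY3) (not (DY4)!), the three orderings in the Serre sum yield
\[
g_{ii}(z_1-z_2)\,g_{ij}(z_1-w)\,g_{ij}(z_2-w),\qquad
g_{ii}(z_1-z_2)\,g_{ij}(z_1-w)\,g_{ji}(w-z_2),\qquad
g_{ii}(z_1-z_2)\,g_{ji}(w-z_1)\,g_{ji}(w-z_2),
\]
and since $g_{ji}(-u)=g_{ij}(u)^{-1}$ these are pairwise distinct, so no common prefactor can be extracted. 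Moreover, $g_{ii}(z_1-z_2)$ is not invariant under $z_1\leftrightarrow z_2$, so the symmetrization does not collapse things either. To make this step work you must additionally invoke the quadratic relation (DY6) for $\tilde X^-$ to trade the different scalar prefactors against reorderings of the $\tilde X^-$'s; equivalently, one uses a characterization of the Serre relation (such as Proposition~\ref{Serre-relation-equivalence-va}) that survives the dressing by $\tilde H^+(\cdot)^{-1}$. As written, your ``factor out'' argument does not go through.

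A smaller but related omission: in deriving $[h_{i,\Y}(z),x_{j,\Y}^-(w)]$ you must also account for the commutator of $h_{i,\Y}^-(z)$ with the $\tilde H_j^+(\cdot)^{-1}$ factor (via (DY2)), not only with $\tilde X_j^-$; your sketch suppresses this contribution, which is needed to land on the stated formula.
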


With Theorem \ref{thm-main-DY},
from now on we use $x_{i,\Y}^\pm(z), \ h_{i,\Y}(z)$  $(i\in I)$ as the generating functions for $\wh{\mathcal{DY}}(A)$.
For $i\in I$, write
\begin{align}
x_{i,\Y}^{\pm}(z)=\sum_{n\in \Z}x_{i,\Y}^{\pm}(n)z^{-n-1},\quad h_{i,\Y}(z)=\sum_{n\in \Z}h_{i,\Y}(n)z^{-n-1}.
\end{align}
Note that for $\ell\in \C$,
\begin{align}\label{Ci-G-L}
&\exp\!\(-G(\partial_x)q^{-\ell\partial_x}h_{i,\Y}^{-}(x)\)
\exp\!\(-G(\partial_x)q^{-\ell\partial_x}h_{i,\Y}^{+}(x)\)\\
=\ & q^{(1-\ell)\partial_x}\!\(\exp \(-2\hbar L(-2\hbar \partial_x)h_{i,\Y}^{-}(x)\)\!\exp \(-2\hbar L(-2\hbar \partial_x)h_{i,\Y}^{+}(x)\)\)\!.\nonumber
\end{align}

\begin{de}
{\em A vacuum $\wh{\mathcal{DY}}(A)$-module} of level $\ell\in \C$ is a pair $(W,w_0)$, where $W$ is a
restricted $\wh{\mathcal{DY}}(A)$-module of level $\ell$ and $w_0$ is a (vacuum) vector in $W$ such that
 $W=(\wh{\mathcal{DY}}(A)w_0)[[\hbar]]'$  and
\begin{align}
x_{i,\Y}^{\pm}(n)w_0=0=h_{i,\Y}(n)w_0\quad \text{ for }i\in I,\ n\ge 0.
\end{align}
\end{de}

A vacuum $\wh{\mathcal{DY}}(A)$-module $(V,{\bf 1})$ of level $\ell$ is said to be {\em universal} if
for any vacuum $\wh{\mathcal{DY}}(A)$-module $(W,w_0)$ of level $\ell$, there exists a unique
$\wh{\mathcal{DY}}(A)$-module homomorphism $\psi: V\rightarrow W$ such that $\psi({\bf 1})=w_0$.

Next, we recall the construction of  the universal vacuum $\wh{\mathcal{DY}}(A)$-module $\V_A(\ell)$ of level $\ell$.
First, let $\widetilde{\mathcal{U}}$ be the associative algebra with identity over $\C$, generated by set
$$\{\tilde{\kappa}\}\cup \{\tilde{e}_i(m), \tilde{f}_i(m), \tilde{h}_i(m)\ |\ i\in I,\ m\in \Z\}.$$
%Define $\widetilde{V}_0^{\ell}$ to be the quotient of the adjoint (left) $\widetilde{A}$-module modulo the left ideal generated by
%the subset $\{\tilde{\kappa}-\ell\}\cup \{ \tilde{e}_i(n),\tilde{f}_i(n), \tilde{h}_i(n)\ |\ i\in I,\ n\ge 0\}$.
%Then $\widetilde{V}_0^{\ell}[[\hbar]]$ is an $\widetilde{A}[[\hbar]]$-module  (which is a topologically free $\C[[\hbar]]$-module).
For $X\in \{ \tilde{e}_i,\tilde{f}_i, \tilde{h}_i\ |\ i\in I\}$, form a generating function
\begin{align}
X(z)=\sum_{m\in \Z}X(m)z^{-m-1}.
\end{align}

Second, consider the $\C[[\hbar]]$-algebra $\widetilde{\mathcal{U}}[[\hbar]]$, and
define $\widetilde{J}_{A,\ell}$ to be the two-sided ideal,
generated by element $\tilde{\kappa}-\ell$ and by the following relations for $i,j\in I$:
\begin{align}
&\tilde{h}_i(z)\tilde{h}_j(w)-\tilde{h}_j(w)\tilde{h}_i(z)
=[a_{i,j}]_{q^{\partial_w}}[\ell]_{q^{\partial_w}}((z-w+\ell\hbar)^{-2}-(w-z+\ell\hbar)^{-2}),\\
&\tilde{h}_i(z)\tilde{e}_j(w)-\tilde{e}_j(w)\tilde{h}_i(z)=\tilde{e}_j(w)[a_{i,j}]_{q^{\partial_w}}((z-w+\ell\hbar)^{-1}-(w-z+\ell\hbar)^{-1}),\\
&\tilde{h}_i(z)\tilde{f}_j(w)-\tilde{f}_j(w)\tilde{h}_i(z)=-\tilde{f}_j(w)[a_{i,j}]_{q^{\partial_w}}((z-w+\ell\hbar)^{-1}-(w-z+\ell\hbar)^{-1}),\\
&(z-w-a_{i,j}\hbar)\tilde{e}_i(z)\tilde{e}_j(w)=(z-w+a_{i,j}\hbar)\tilde{e}_j(w)\tilde{e}_i(z),\label{e-i-e-j}\\
&(z-w-a_{i,j}\hbar)\tilde{f}_i(z)\tilde{f}_j(w)=(z-w+a_{i,j}\hbar)\tilde{f}_j(w)\tilde{f}_i(z),\label{f-i-f-j}\\
&\label{i-not-j}
(z-w+a_{i,j}\hbar) \tilde{e}_i(z)\tilde{f}_j(w)=(z-w-a_{i,j}\hbar) \tilde{f}_j(w)\tilde{e}_i(z)\quad \text{ if }i\ne j,\\
&\label{i=j}
(z-w)(z-w+2\ell\hbar)\!\((z-w+2\hbar) \tilde{e}_i(z)\tilde{f}_i(w)-(z-w-2\hbar) \tilde{f}_i(w)\tilde{e}_i(z)\)\!=0,
\end{align}
and the order-$2$ Serre relations for $(\tilde{e}_i(z_1),  \tilde{e}_i(z_1), \tilde{e}_j(w))$ and $(\tilde{f}_i(z_1),  \tilde{f}_i(z_1), \tilde{f}_j(w))$
for $i,j\in I$ with $a_{i,j}=-1$.

By Lemma \ref{basic-facts-top-algebra}, $\overline{[\widetilde{J}_{A,\ell}]}$ is a two-sided ideal of $\widetilde{\mathcal{U}}[[\hbar]]$
and a topologically free $\C[[\hbar]]$-submodule.  Then set
\begin{align}
\mathcal{U}_{A,\ell}=\widetilde{\mathcal{U}}[[\hbar]]/ \overline{[\widetilde{J}_{A,\ell}]},
\end{align}
which is an associative $\C[[\hbar]]$-algebra and
a topologically free $\C[[\hbar]]$-module by Lemma \ref{strong-submodule}. For $i\in I,\ m\in \Z$, set
\begin{align}
\bar{e}_i(m)=\tilde{e}_i(m)+\overline{[\widetilde{J}_{A,\ell}]},\  \bar{f}_i(m)=\tilde{f}_i(m)+\overline{[\widetilde{J}_{A,\ell}]},\
\bar{h}_i(m)=\tilde{h}_i(m)+\overline{[\widetilde{J}_{A,\ell}]}\in \mathcal{U}_{A,\ell}.
\end{align}
Then define generating functions $\bar{e}_i(z),\ \bar{f}_i(z),\ \bar{h}_i(z)$ for $i\in I$ correspondingly.

Third, let $\mathcal{J}_{A,\ell}$ be the left ideal of $\mathcal{U}_{A,\ell}$, generated by the subset
$$\{ \bar{e}_i(n),\bar{f}_i(n), \bar{h}_i(n)\ |\ i\in I,\ n\ge 0\}.$$
Then $\overline{[\mathcal{J}_{A,\ell}]}$ is a topologically free $\C[[\hbar]]$-submodule and a left ideal of $\mathcal{U}_{A,\ell}$.
Define
\begin{align}
\mathcal{V}_{A,\ell}=\mathcal{U}_{A,\ell}/\overline{[\mathcal{J}_{A,\ell}]},
\end{align}
which is a (left) $\mathcal{U}_{A,\ell}$-module and a  topologically free $\C[[\hbar]]$-module.  Set
$${\bf 1}=1+\overline{[\mathcal{J}_{A,\ell}]}\in \mathcal{V}_{A,\ell},$$
and furthermore set
\begin{align}
\hat{e}_i=\bar{e}_i(-1){\bf 1},\ \  \hat{f}_i=\bar{f}_i(-1){\bf 1},\ \ \hat{h}_i=\bar{h}_i(-1){\bf 1}\in \mathcal{V}_{A,\ell}
\quad \text{for }i\in I.
\end{align}

We have (see \cite{KL1}):

\begin{prop}\label{pre-universal-qva}
There exists an $\hbar$-adic weak quantum vertex algebra structure on $\mathcal{V}_{A,\ell}$, which is uniquely determined by
the condition that ${\bf 1}$ is the vacuum vector and
\begin{align}
Y(\hat{e}_i,z)=\bar{e}_i(z),\ \ Y(\hat{f}_i,z)=\bar{f}_i(z),\ \ Y(\hat{h}_i,z)=\bar{h}_i(z)\quad \text{for }i\in I.
\end{align}
\end{prop}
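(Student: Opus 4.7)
The plan is to apply the $\hbar$-adic version of H.~Li's reconstruction theorem, developed in \cite{Li-h-adic}: given a topologically free $\C[[\hbar]]$-module $W$ with a distinguished vector $\vac$ that is cyclic under modes of a family $\mathcal{T}$ of pairwise $\SY$-local fields on $W$ and satisfying the creation condition $a(z)\vac\in W[[z]]$ for every $a(z)\in\mathcal{T}$, there exists a unique $\hbar$-adic weak quantum vertex algebra structure on $W$ with vacuum $\vac$ such that $Y(\,a(z)\vac|_{z=0}\,,z)=a(z)$ for all $a(z)\in\mathcal{T}$.

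The first task is to show that the family $\mathcal{T}=\{\bar{e}_i(z),\bar{f}_i(z),\bar{h}_i(z)\}_{i\in I}\subset \E_\hbar(\mathcal{V}_{A,\ell})$ consists of pairwise $\SY$-local fields. Every defining relation of $\widetilde{J}_{A,\ell}$ becomes, in $\mathcal{U}_{A,\ell}$ acting on $\mathcal{V}_{A,\ell}$, an identity of the form
\[
p(z-w)\,a(z)b(w)=q(z-w)\sum a'(w)b'(z)
\]
for polynomials $p,q\in\C[z][[\hbar]]$; by Lemma~\ref{sim-commutator}, this gives $\SY$-locality of $(a(z),b(w))$ with a locality operator built from the $\iota$-expansion of $q(z-w)/p(z-w)$ composed with the flip. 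The relation \eqref{i=j} for the $\bar{e}_i$-$\bar{f}_i$ pair is the sole exception: the extra factor $(z-w)(z-w+2\ell\hbar)$ means $\SY$-locality holds only up to delta-function contributions at $z=w$ and $z-w=-2\ell\hbar$, which fit the $\SY$-commutator framework of Lemma~\ref{sim-commutator} with a nontrivial right-hand side. The order-$2$ Serre relations among the generators are also of the required form and will be reinterpreted, via Propositions~\ref{Serre-relation-equivalence-va}--\ref{Serre-relation-abstract}, once the weak quantum vertex algebra structure is in place.

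The definition of the left ideal $\overline{[\mathcal{J}_{A,\ell}]}$ ensures that the nonnegative modes of every field in $\mathcal{T}$ annihilate ${\bf 1}$, so $a(z){\bf 1}\in \mathcal{V}_{A,\ell}[[z]]$ with $(a(z){\bf 1})|_{z=0}$ equal to $\hat{e}_i$, $\hat{f}_i$, or $\hat{h}_i$, as appropriate; and $\mathcal{V}_{A,\ell}$ is cyclically generated by ${\bf 1}$ under modes of $\mathcal{T}$ by construction. Both hypotheses of the reconstruction theorem are thus met, and its application produces the asserted $\hbar$-adic weak quantum vertex algebra structure on $\mathcal{V}_{A,\ell}$ with the prescribed vertex operators on $\hat{e}_i,\hat{f}_i,\hat{h}_i$.

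Uniqueness is then immediate from the $\hbar$-adic weak associativity combined with the creation property: once $Y(\hat{e}_i,z)$, $Y(\hat{f}_i,z)$, $Y(\hat{h}_i,z)$ are prescribed, $Y(v,z)$ is determined for every $v$ of the form $a^{(1)}_{n_1}\cdots a^{(k)}_{n_k}{\bf 1}$ with $a^{(j)}(z)\in\mathcal{T}$, and by $\hbar$-adic continuity on all of $\mathcal{V}_{A,\ell}$. The main technical obstacle is verifying that the identities coming from $\widetilde{J}_{A,\ell}$ give $\SY$-locality uniformly modulo $\hbar^n$ for every $n\in\Z_+$ with polynomial multipliers whose degrees do not grow with $n$; this is immediate from the explicit form of the defining relations, in which the degrees and $\hbar$-content of the polynomial factors are fixed independently of $n$.
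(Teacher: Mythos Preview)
The paper does not prove this proposition in the present text; it is recalled from \cite{KL1} (see the sentence ``We have (see \cite{KL1}):'' preceding the statement).  Your outline via the $\hbar$-adic reconstruction theorem of \cite{Li-h-adic} is the standard route and is almost certainly what \cite{KL1} does, so in spirit the approaches agree.

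Two small corrections to your sketch.  First, Lemma~\ref{sim-commutator} is stated for modules over an \emph{already existing} $\hbar$-adic nonlocal vertex algebra, so it is not the tool you want for establishing $\SY$-locality \emph{before} the structure is built.  What you actually need is the elementary observation that a relation $p(z-w)a(z)b(w)=q(z-w)b(w)a(z)$ with $p,q\in\C[x][[\hbar]]$ and $p(x)\equiv q(x)\equiv x^{d}\ (\mathrm{mod}\ \hbar)$ yields, by the usual bootstrap (multiply by $(z-w)$ and feed the relation back into itself), $(z-w)^{k}\bigl(a(z)b(w)-g(z-w)b(w)a(z)\bigr)\equiv 0\ (\mathrm{mod}\ \hbar^{n})$ for $g=\iota_{x}(q/p)\in\C((x))[[\hbar]]$ and $k$ depending on $n$; this is exactly $\SY$-locality in the sense of Definition~\ref{def-hwqva}.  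Second, the Serre relations involve three fields and are irrelevant to the pairwise $\SY$-locality hypothesis of the reconstruction theorem; they simply survive as extra identities in the constructed algebra (your parenthetical remark to this effect is correct, so this is only a clarification).  One point you do not mention but which is needed is that the generating fields lie in $\E_{\hbar}(\mathcal{V}_{A,\ell})$; this follows from the $\SY$-locality relations together with the creation property by the standard inductive argument on the length of monomials in modes applied to~$\vac$.
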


For $i\in I$, set
\begin{align}
C_i=e^{(1-\ell)\hbar \D}E^{-}(-\hat{h}_i,-2\hbar){\bf 1}\in \mathcal{V}_{A,\ell},
\end{align}
where $\D$ is the canonical $D$-operator of $\mathcal{V}_{A,\ell}$ and
$$ E^{-}(-\hat{h}_i,-2\hbar)=\exp\!\(\sum_{n\in \Z_{+}}\frac{1}{n}\hat{h}_i(-n)(-2\hbar)^n\right)\!.$$
Note that $C_i={\bf 1}-2\hbar \hat{h}_i+O(\hbar^2)$ (as $\D {\bf 1}=0$).

Denote by $K$ the subset of $\mathcal{V}_{A,\ell}$, consisting of vectors
 \begin{align}
 (\hat{e}_i)_0(\hat{f}_j)-\frac{1}{2\hbar}\delta_{i,j}({\bf 1}-C_i),\ \
 (\hat{e}_i)_n(\hat{f}_j)-\delta_{i,j}\ell (-2\ell\hbar)^{n-1}C_i\ \ \text{ for }i,j\in I,\ n\ge 1
 \end{align}
 and vectors
 \begin{align}
 (\hat{e}_i)_n(\hat{e}_j),\ (\hat{f}_i)_n(\hat{f}_j)\quad \text{ for } i,j\in I\ \text{with } a_{i,j}= 0\ \text{and for }n\ge 0.
 \end{align}

 \begin{de}
 Let $(K)$ denote the strong ideal of $\mathcal{V}_{A,\ell}$, generated by $K$.
 Set
 \begin{align}
 {\mathcal{V}}_A(\ell)=\mathcal{V}_{A,\ell}(\ell)/(K),
 \end{align}
which is a topologically free $\C[[\hbar]]$-module and
an $\hbar$-adic weak quantum vertex algebra.
\end{de}

Set
\begin{align}
\check{e}_i=\hat{e}_i+(K),\ \check{f}_i=\hat{f}_i+(K),\ \check{h}_i=\hat{h}_i+(K)\in {\mathcal{V}}_A(\ell)
\end{align}
for $i\in I$, and furthermore set
\begin{align}
\mathfrak{c}_i=e^{(1-\ell)\hbar \D}E^{-}(-\check{h}_i,-2\hbar){\bf 1}\in {\mathcal{V}}_A(\ell),
\end{align}
where $\D$ is the $D$-operator of ${\mathcal{V}}_A(\ell)$.
For $i,j\in I$, we have
   \begin{align}\label{key-relation}
 &Y(\check{e}_i,z)Y(\check{f}_j,w) -\! \(\frac{w-z+a_{i,j}\hbar}{w-z-a_{i,j}\hbar}\)\!Y(\check{f}_j,w)Y(\check{e}_i,z)\\
 =\ &\delta_{i,j}\frac{1}{2\hbar}\!\left(\!z^{-1}\delta\!\left(\frac{w}{z}\right)-Y(\mathfrak{c}_i,w)z^{-1}\delta\!\left(\frac{w-2\ell\hbar}{z}\right)\!\)\!.\nonumber
  \end{align}
Furthermore, using Proposition \ref{Y-W-E(a,z)} and relation (\ref{Ci-G-L}) we obtain
 \begin{align}\label{explicit-expression}
 &Y(\mathfrak{c}_i,x) \\
 =\ &e^{(1-\ell)\hbar \partial_x}Y\!\(E^{-}(-\check{h}_i,-2\hbar){\bf 1},x\)\nonumber\\
= \ &e^{(1-\ell)\hbar \partial_x}
\exp\! \(-2\hbar L(-2\hbar\partial_x)Y^{+}(\check{h}_i,x)\)\exp\! \(-2\hbar L(-2\hbar\partial_x)Y^{-}(\check{h}_i,x)\)\nonumber\\
 =\ &\exp\!\(-G(\partial_x)q^{-\ell\partial_x}Y^{+}(\check{h}_i,x)\)\exp\!\(-G(\partial_x)q^{-\ell\partial_x}Y^{-}(\check{h}_i,x)\)\!.\nonumber
  \end{align}
For $i,j\in I$ with $a_{i,j}= 0$, we have $(\check{e}_i)_n(\check{e}_j)=0=(\check{f}_i)_n(\check{f}_j)$ for $n\ge 0$, and
\begin{align}
&Y(\check{e}_i,z)Y(\check{e}_j,w)= %\(\frac{w-z-a_{i,j}\hbar}{w-z+a_{i,j}\hbar}\)\!
Y(\check{e}_j,w)Y(\check{e}_i,z),\\
&Y(\check{f}_i,z)Y(\check{f}_j,w)= %\(\frac{w-z-a_{i,j}\hbar}{w-z+a_{i,j}\hbar}\)\!
Y(\check{f}_j,w)Y(\check{f}_i,z).
\end{align}

The following  were proved in \cite{KL1}:

\begin{prop}
 Let $\ell\in \C$. Then the $\hbar$-adic weak quantum vertex algebra $\V_A(\ell)$
 is a restricted $\wh{\mathcal{DY}}(A)$-module of level $\ell$ with
 \begin{align}
 x_{i,\Y}^{+}(z)=Y(\check{e}_i,z),\ \ x_{i,\Y}^{-}(z)=Y(\check{f}_i,z),\ \ h_{i,\Y}(z)=Y(\check{h}_i,z)\ \ \text{ for }i\in I.
 \end{align}
 Furthermore, $({\mathcal{V}}_A(\ell), {\bf 1})$ is a universal vacuum $\wh{\mathcal{DY}}(A)$-module
 of level $\ell$.
\end{prop}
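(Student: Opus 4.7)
The plan is to use Theorem \ref{thm-main-DY} as the main verification tool. Under the identifications $h_{i,\Y}(z)=Y(\check{h}_i,z)$, $x_{i,\Y}^{+}(z)=Y(\check{e}_i,z)$, $x_{i,\Y}^{-}(z)=Y(\check{f}_i,z)$, with $\kappa$ acting as the scalar $\ell$, the relations listed in Theorem \ref{thm-main-DY} must be verified as identities in $\E_\hbar(\V_A(\ell))$. By inspection, the defining relations of the ideal $\widetilde{J}_{A,\ell}$ cutting out $\mathcal{V}_{A,\ell}$ match verbatim the $[h,h]$, the $[h,x^\pm]$, the quadratic $(z-w\pm a_{i,j}\hbar)$-braiding among the $x^{\pm}$'s, the $i\ne j$ case of $[x^+,x^-]$, the commutation when $a_{i,j}=0$, and the order-$2$ Serre relations of Theorem \ref{thm-main-DY}. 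Since $(K)$ is a strong ideal and $\V_A(\ell)=\mathcal{V}_{A,\ell}/(K)$ inherits an $\hbar$-adic weak quantum vertex algebra structure, all of these relations descend to $\V_A(\ell)$.

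The only item requiring genuine work is the $i=j$ case of the $[x^+,x^-]$ relation, for which $\widetilde{J}_{A,\ell}$ imposes only the weaker polynomial consequence \eqref{i=j}; the missing delta-function source with $C_i(w)$ is exactly what the quotient by $(K)$ supplies. By the definition of $K$, in $\V_A(\ell)$ one has
\[
(\check{e}_i)_0(\check{f}_i)=\tfrac{1}{2\hbar}({\bf 1}-\mathfrak{c}_i),\qquad (\check{e}_i)_n(\check{f}_i)=\ell(-2\ell\hbar)^{n-1}\mathfrak{c}_i\ \text{ for }n\ge 1,
\]
with the remaining iterates $\hbar$-adically vanishing. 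Substituting these into the $\SY$-commutator formula (the $\Res_{x_0}$ consequence of Proposition \ref{lem:q-Jacobi}), where the locality operator acts here by multiplication by $\frac{w-z+2\hbar}{w-z-2\hbar}$ composed with the flip, a brief resummation of the resulting geometric series in $\partial_{x_2}^n x_1^{-1}\delta(x_2/x_1)$ produces \eqref{key-relation}. The identification $Y(\mathfrak{c}_i,w)=C_i(w)$ needed to match Theorem \ref{thm-main-DY} is precisely the calculation \eqref{explicit-expression}, which is an application of Proposition \ref{Y-W-E(a,z)} to $a=\check{h}_i$ (the hypotheses \eqref{a-conditions} being read off the $[h_i,h_i]$ relation), combined with the algebraic identity \eqref{Ci-G-L}.

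For the remaining module-theoretic hypotheses: topological freeness of $\V_A(\ell)$ is automatic as a quotient by a strong ideal (Lemma \ref{strong-ideal}); the generating vertex operators lie in $\E_\hbar(\V_A(\ell))$, which gives the restrictedness condition $\lim_{n\rightarrow\infty}X(n)v=0$; and the vacuum annihilation $x_{i,\Y}^{\pm}(n){\bf 1}=0=h_{i,\Y}(n){\bf 1}$ for $n\ge 0$ is the creation property $Y(v,z){\bf 1}\in\V_A(\ell)[[z]]$. For universality, given a vacuum $\wh{\mathcal{DY}}(A)$-module $(W,w_0)$ of level $\ell$, the operators $x_{i,\Y}^{\pm}(n),h_{i,\Y}(n)$ on $W$ satisfy all the relations of Theorem \ref{thm-main-DY}; multiplying the $i=j$ delta relation by $(z-w)(z-w+2\ell\hbar)$ to kill the delta terms recovers \eqref{i=j} on $W$, so every defining relation of $\widetilde{J}_{A,\ell}$ holds there, yielding a $\C[[\hbar]]$-algebra map $\mathcal{U}_{A,\ell}\to\End(W)$. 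Evaluating at $w_0$ kills $\overline{[\mathcal{J}_{A,\ell}]}$ by vacuum annihilation, producing a continuous $\mathcal{U}_{A,\ell}$-module map $\mathcal{V}_{A,\ell}\to W$; by the same $i=j$ delta relation on $W$ together with the identification $Y(\mathfrak{c}_i,w)\leftrightarrow C_i(w)$, the generators of $K$ also map to zero, giving the desired $\psi:\V_A(\ell)\to W$. Uniqueness is forced by the $\hbar$-adic cyclicity of ${\bf 1}$. The principal technical obstacle throughout is the topological bookkeeping: one must verify at each stage that the strong-ideal closures commute with the algebraic constructions, that the induced map $\mathcal{V}_{A,\ell}\to W$ is $\hbar$-adically continuous, and that the various residue/resummation manipulations in the $\SY$-commutator are well-defined modulo $\hbar^n$ for every $n$.
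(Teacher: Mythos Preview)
The paper does not give its own proof of this proposition; it is one of the results explicitly attributed to \cite{KL1} (see the sentence ``The following were proved in \cite{KL1}:'' immediately preceding the statement). So there is no in-paper proof to compare against directly.

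That said, your argument is correct and is exactly the argument the paper's setup is designed to support. The paper records, without proof but as consequences of the construction, the identities \eqref{key-relation} and \eqref{explicit-expression}; your derivation of \eqref{key-relation} from the generators of $K$ via the $\SY$-commutator formula and the geometric resummation $\sum_{n\ge1}\frac{(-2\ell\hbar)^n}{n!}\partial_w^n\, z^{-1}\delta(w/z)=z^{-1}\delta\bigl(\tfrac{w-2\ell\hbar}{z}\bigr)-z^{-1}\delta(w/z)$ is the intended computation, and your invocation of Proposition \ref{Y-W-E(a,z)} plus \eqref{Ci-G-L} to match $Y(\mathfrak{c}_i,w)$ with $C_i(w)$ is precisely what the paper sketches in \eqref{explicit-expression}. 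The universality argument you outline---pulling back through $\widetilde{\mathcal{U}}[[\hbar]]\to\mathcal{U}_{A,\ell}\to\mathcal{V}_{A,\ell}\to\V_A(\ell)$ by checking that each layer of relations holds in an arbitrary vacuum module $(W,w_0)$---is also the natural one. Your closing caveat about topological bookkeeping is well placed: the passage from $\widetilde{J}_{A,\ell}$ to $\overline{[\widetilde{J}_{A,\ell}]}$ and from $\mathcal{J}_{A,\ell}$ to $\overline{[\mathcal{J}_{A,\ell}]}$ does require checking that the relations one derives on $W$ lie in the respective closures, but this is routine given that $W$ is topologically free and the generators act through $\E_\hbar(W)$.
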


\begin{thm}\label{hqva-module-main}
Let $\ell\in \C$. For any restricted $\wh{\mathcal{DY}}(A)$-module $W$ of level $\ell$, there exists a
$\V_A(\ell)$-module structure $Y_W(\cdot,z)$ which is uniquely determined by
 \begin{align}
 Y_W(\check{e}_{i},z)=x_{i,\Y}^{\pm}(z),\ \ Y_W(\check{f}_{i},z)=x_{i,\Y}^{\pm}(z),\ \ Y_W(\check{h}_{i},z)=h_{i,\Y}(z)\ \ \text{ for }i\in I.
 \end{align}
On the other hand, for any $\V_A(\ell)$-module $(W,Y_W)$,
there exists a restricted
$\wh{\mathcal{DY}}(A)$-module structure of level $\ell$ on $W$ such that
 \begin{align}
x_{i,\Y}^{+}(z)=Y_W(\check{e}_{i},z),\ \ x_{i,\Y}^{-}(z)=Y_W(\check{f}_{i},z),\ \ h_{i,\Y}(z)=Y_W(\check{h}_i,z)\ \ \text{ for }i\in I.
 \end{align}
\end{thm}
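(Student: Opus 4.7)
The strategy is to transfer relations between the $\hbar$-adic weak quantum vertex algebra $\V_A(\ell)$ and its modules using Proposition \ref{prop-2.25} for the $\mathcal{S}$-commutator formulas (with $\delta$-function right-hand side) and Proposition \ref{Serre-relation-abstract} for the order-$2$ Serre relations, and then to compare the resulting operator identities with the list of relations in Theorem \ref{thm-main-DY} characterizing restricted $\wh{\mathcal{DY}}(A)$-modules.

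For the ``second'' (easier) direction, suppose $(W, Y_W)$ is a $\V_A(\ell)$-module. Put
$$x_{i,\Y}^+(z) := Y_W(\check{e}_i, z), \quad x_{i,\Y}^-(z) := Y_W(\check{f}_i, z), \quad h_{i,\Y}(z) := Y_W(\check{h}_i, z).$$
On $\V_A(\ell)$ itself the generating operators satisfy \eqref{key-relation} together with the analogous $\mathcal{S}$-commutator relations among the $\check{h}_i, \check{e}_i, \check{f}_i$ and the order-$2$ Serre relations coming from the vanishing of the generators of $(K)$. Applying Proposition \ref{prop-2.25} transfers each of the $\mathcal{S}$-commutator identities on $\V_A(\ell)$ to the module $(W, Y_W)$, and Proposition \ref{Serre-relation-abstract} transfers the Serre relations. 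Combined with \eqref{explicit-expression} — which identifies $Y_W(\mathfrak{c}_i, w)$ on $W$ with precisely the operator $C_i(w)$ of Theorem \ref{thm-main-DY} — the operators $x_{i,\Y}^\pm(z), h_{i,\Y}(z)$ satisfy the complete list of relations in Theorem \ref{thm-main-DY}, so $W$ becomes a restricted $\wh{\mathcal{DY}}(A)$-module of level $\ell$.

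For the ``first'' direction, suppose $W$ is a restricted $\wh{\mathcal{DY}}(A)$-module of level $\ell$; by Theorem \ref{thm-main-DY} the operators $x_{i,\Y}^\pm(z), h_{i,\Y}(z)$ satisfy the listed relations on $W$. I would first build a $\mathcal{V}_{A,\ell}$-module structure on $W$ via the assignment $\hat{e}_i \mapsto x_{i,\Y}^+(z),\ \hat{f}_i \mapsto x_{i,\Y}^-(z),\ \hat{h}_i \mapsto h_{i,\Y}(z)$, extending to higher vectors using the vertex-operator iterate formula. The verification of consistency reduces to checking that the defining relations of $\mathcal{U}_{A,\ell}$ (the relations (\ref{e-i-e-j})--(\ref{i=j}) and those preceding them) are satisfied by the chosen operators on $W$; this follows from Theorem \ref{thm-main-DY} after the change of variables recorded in Definition \ref{new-operators} and Theorem \ref{thm-main-DY}. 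The converse half of Proposition \ref{prop-2.25} then lets one promote the generator-level $\mathcal{S}$-commutator relations to a genuine module-theoretic $\mathcal{S}$-Jacobi identity (via Lemma \ref{sim-commutator}), and Proposition \ref{Serre-relation-abstract} handles the Serre relations. Finally, using \eqref{explicit-expression} to identify $Y_W(\mathfrak{c}_i, w)$ with $C_i(w)$, the generators of the strong ideal $(K)$ act as zero on $W$, so the $\mathcal{V}_{A,\ell}$-action descends to a $\V_A(\ell)$-action.

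The main obstacle is the first direction, specifically verifying that the generator-level assignment extends to a genuine $\mathcal{V}_{A,\ell}$-module action (the $\hbar$-adic weak associativity and correct representation of all products $a_{-n}b$). The structural principle at work is that the relations of Theorem \ref{thm-main-DY}, regarded as identities of generating series on $W$, are the exact module-level mirror of the $\mathcal{S}$-commutator and Serre relations holding among $\check{e}_i, \check{f}_i, \check{h}_i$ in $\V_A(\ell)$. Proposition \ref{prop-2.25} (in both directions) and Proposition \ref{Serre-relation-abstract} furnish the bridge between the two sides, while the strong-ideal formalism of Section 2 together with \eqref{explicit-expression} controls the descent through $(K)$.
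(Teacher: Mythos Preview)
First, a contextual note: this theorem is not proved in the present paper; it is quoted from the companion paper \cite{KL1}. So there is no ``paper's own proof'' here to compare against directly, only the surrounding machinery and the standard methodology of \cite{Li-h-adic}, \cite{Li-nonlocal}.

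Your second direction is fine and matches what one expects. The first direction, however, has a genuine gap. You write that you would ``build a $\mathcal{V}_{A,\ell}$-module structure on $W$ \dots\ extending to higher vectors using the vertex-operator iterate formula,'' and then invoke the \emph{converse} of Proposition~\ref{prop-2.25} to promote generator relations to a module Jacobi identity. But the converse of Proposition~\ref{prop-2.25} requires $W$ to already be a (faithful) module, which is precisely what you are trying to construct; this is circular. Likewise, checking that the associative-algebra relations of $\mathcal{U}_{A,\ell}$ hold on $W$ does not by itself produce an $\hbar$-adic nonlocal vertex algebra module structure with weak associativity --- the ``iterate formula'' extension is not well-defined without further input. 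The standard approach (and almost certainly that of \cite{KL1}, given the cited apparatus) is conceptual: one shows that the operators $x_{i,\Y}^{\pm}(z), h_{i,\Y}(z)\in\E_\hbar(W)$ form an $\hbar$-adically $\SY$-local subset, then invokes the general construction from \cite{Li-h-adic} whereby such a subset generates an $\hbar$-adic nonlocal vertex algebra $\langle U_W\rangle\subset\E_\hbar(W)$ under the canonical $Y_\E$-map, with $W$ automatically a module. The universal property of $\mathcal{V}_{A,\ell}$ then yields a homomorphism $\mathcal{V}_{A,\ell}\to\langle U_W\rangle$, and one checks it kills the generators of $(K)$. Your proposal is missing this construction step entirely.
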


\section{$\hbar$-adic quantum vertex algebra $V_L[[\hbar]]^{\eta}$}\label{sec:deform-L-Q}

This section is the core of this paper.
Let $A$ be a simply-laced GCM and let $L$ be the root lattice of the Kac-Moody algebra $\g(A)$.
As the main results, by applying a general result of \cite{JKLT-22} we get a particular $\hbar$-adic quantum vertex algebra
$V_L[[\hbar]]^{\eta}$, and we show that
every $V_L[[\hbar]]^{\eta}$-module is naturally a $\widehat{\mathcal{DY}}(A)$-module of level one
 and $V_L[[\hbar]]^{\eta}$ is a homomorphism image of
the $\hbar$-adic weak quantum vertex algebra $\V_A(1)$. In case that $A$ is of finite $ADE$ type,
we obtain a precise description of $V_L[[\hbar]]^{\eta}$ as a quotient of the $\hbar$-adic weak quantum vertex algebra $\V_A(1)$.

Recall that $\g(A)$ is equipped with the standard Cartan subalgebra $\h$ with the sets of simple roots and coroots
$\set{\al_i}{i\in I}\subset\h^\ast$ and $\set{h_i}{i\in I}\subset\h,$ respectively, where
\begin{align}
  \al_j(h_i)=a_{i,j}\quad \mbox{ for }i,j\in I.
\end{align}
Set
\begin{align}
L=\sum_{i\in I}\Z\alpha_i\subset \h^*,
\end{align}
the root lattice of $\g(A)$.
Fix a non-degenerate symmetric invariant bilinear form $\<\cdot,\cdot\>$
 on $\g(A)$ such that $\<h_i,h_j\>=a_{i,j}$ for $i,j\in I$.
 Note that $\<\cdot,\cdot\>$ is also non-degenerate on $\h$.
Identify $\h^\ast$ with $\h$ via  $\<\cdot,\cdot\>$. Then $\<\alpha_i,\alpha_j\>=a_{i,j}$ for $i,j\in I$ and
\begin{align*}
  \<\al,\al\>\in 2\Z\quad  \te{for }\al\in L.
\end{align*}

Form an (affine) Lie algebra $\wh\h:=\h\ot\C[t,t\inv]\oplus\C {\bf k}$,
where  $[{\bf k},\wh \h]=0$ and
\begin{align}
  [h(m),h'(n)]=m\delta_{m+n,0}\<h,h'\>{\bf k}
\end{align}
for $h,h'\in\h$, $m,n\in\Z$ with $h(m):=h\ot t^m$. Identify $\h$ with $\h\ot t^0$.
Set
\begin{align}
  \wh\h^\pm=\h\ot t^{\pm 1}\C[t^{\pm 1}],
\end{align}
which are abelian Lie subalgebras of $\wh\h$.  Furthermore, set
\begin{align*}
  \wh\h'=\wh\h^++\wh\h^-+\C {\bf k},
\end{align*}
which is a Heisenberg algebra.
Then $\wh\h=\wh\h'\oplus\h$ is a direct sum of Lie algebras.
The symmetric algebra $S(\wh\h^-)$ affords a canonical level-one irreducible representation of $\wh\h'$.
Let $\h$ act trivially to make $S(\wh\h^-)$ an $\wh\h$-module (of level one).

Let $\varepsilon:L\times L\to \C^\times$ be a $2$-cocycle such that
\begin{align*}
 \varepsilon(\al,\be)\varepsilon(\be,\al)\inv=(-1)^{\<\al,\be\>},\quad
  \varepsilon(\al,0)=1=\varepsilon(0,\al)\   \quad \te{for } \al,\be\in L.
\end{align*}
The $\varepsilon$-twisted group algebra $\C_\varepsilon[L]$ of $L$ has a designated basis $\set{e_\al}{\al\in L}$ with
$$e_\al\cdot e_\be=\varepsilon(\al,\be)e_{\al+\be}\quad \text{ for }\al,\be\in L.$$
Make $\C_\varepsilon[L]$ an $\wh\h$-module by letting $\wh\h'$ act trivially and letting $\h$ act by
\begin{align}
  h\cdot e_\al=\al(h)e_\al\quad \te{for }h\in\h,\  \al\in L.
\end{align}
Set
\begin{align}
V_{(\h,L)}=S(\wh\h^-)\ot \C_\varepsilon[L],
\end{align}
equipped with the tensor product $\wh\h$-module structure.
View $\h$ and $\C_\varepsilon[L]$ as subspaces of $V_{(\h,L)}$ via the maps
$h\mapsto h(-1)\ot 1$ ($h\in\h$) and $e_\al\mapsto 1\ot e_\al$ ($\al\in L$).

Set
\begin{align}
  \vac=e_0\in\C_\varepsilon[L]\subset V_{(\h,L)}.
\end{align}
For $\al\in L$, a linear operator $z^\al:\ V_{(\h,L)}\to V_{(\h,L)}[z,z\inv]$ is defined by
\begin{align}
  z^\al(u\otimes e_\be)=z^{\<\al,\be\>}(u\otimes e_\be)\quad \te{for }u\in S(\wh\h^-),\ \be\in L.
\end{align}
On the other hand, for $a\in \h$ set
\begin{align}\label{E-pm-h,z}
E^{\pm}(a,z)= \exp\!\(\sum_{\pm n\in \Z_{+}}\frac{a(n)}{n}z^{-n}\)\!.
\end{align}
Then (see \cite{Bor}, \cite{FLM}, cf. \cite{LX}, \cite{LW}),
there exists a vertex algebra structure on $V_{(\h,L)}$,
which is uniquely determined by the condition that ${\bf 1}$ is the vacuum vector and
\begin{align}
  &Y(h,z)=h(z):=\sum_{n\in\Z}h(n)z^{-n-1}\quad \te{ for }h\in \h,\\
  &Y(e_\al,z)=E^{-}(-\al,z)E^{+}(-\al,z)e_\al z^\al \quad \te{ for }\al\in L.
 \end{align}
Furthermore, $V_{(\h,L)}$ is a simple conformal vertex algebra
(which satisfies all the conditions for a vertex operator algebra except the two grading restrictions as in \cite{FLM}).

By the same arguments of \cite{JKLT-23} (cf.  \cite{JKLT-22}, \cite{LL}) we have:

\begin{prop}\label{Universal-property}
Let $V$ be any nonlocal vertex algebra with a linear map
$\psi:\h\oplus\C_{\varepsilon}[L]\rightarrow V$ such that $\psi(e_0)={\bf 1}$. Set
\begin{align*}
  h[z]= Y(\psi(h),z),\quad e_\al[z]= Y(\psi(e_\al),z)\quad
  \te{for }h\in\h,\  \al\in L.
\end{align*}
Assume that all the following relations hold for $h, h'\in \h,\  \al,\be\in L$:
\begin{align}
    & \   \   \    \left[h[x],
    h'[z]\right]=\<h,h'\>\frac{\partial}{\partial z}x\inverse\delta\!\(\frac{z}{x}\)\!,\\
  & \   \  \    \left[h[x],e_\al[z]\right]=\<\al,h\>
    e_\al[z]x\inverse\delta\!\(\frac{z}{x}\)\!,\\
  & \   \   \   \frac{d}{dz} e_\al[z]=\al[z]^{+} e_\al[z]+e_\al[z]\al[z]^{-},
\end{align}
where $\al[z]^{+}=\sum_{n<0}\al[n]z^{-n-1},\  \al[z]^{-}=\sum_{n\ge 0}\al[n]z^{-n-1}$, and
\begin{align*}%\label{A(L)457}
(x-z)^{-\<\al,\be\>-1}e_{\alpha}[x]e_{\be}[z]
-(-z+x)^{-\<\al,\be\>-1}e_{\be}[z]e_{\alpha}[x]%\nonumber\\
%&\quad \quad \quad
=  \varepsilon(\al,\be)e_{\al+\be}[z]x^{-1}\delta\!\left(\frac{z}{x}\right)\!.
\end{align*}
Then $\psi$ extends uniquely to a nonlocal vertex algebra morphism from $V_{(\h,L)}$ to $V$. Furthermore, if
$V$ is generated by $\psi(\h+\C_{\varepsilon}[L])$, then $\psi$ is an isomorphism.
\end{prop}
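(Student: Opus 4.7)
The plan is to establish the universal property by adapting the vertex-operator construction of \cite{JKLT-23} (cf.\ \cite{LL}). The four hypothesis relations are precisely those characterizing $V_{(\h,L)}$ as a nonlocal vertex algebra generated by $\h$ and $\C_\varepsilon[L]$ subject to the Heisenberg, weight, and lattice Jacobi relations.

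For uniqueness, $V_{(\h,L)}$ is spanned over $\C$ by PBW-type monomials $h^1(-n_1)\cdots h^k(-n_k)\, e_\alpha$ with $h^i\in\h$, $n_i\in\Z_+$, and $\alpha\in L$. Any nonlocal vertex algebra morphism $\tilde\psi$ extending $\psi$ must satisfy $\tilde\psi(u_m v)=\psi(u)_m\tilde\psi(v)$ whenever $u\in\h\oplus\C_\varepsilon[L]$, so each such monomial is forced to go to $h^1[-n_1]\cdots h^k[-n_k]\psi(e_\alpha)$, determining $\tilde\psi$ everywhere.

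For existence, I would apply Li's local system construction. The collection $\mathcal{S}=\{h[z]:h\in\h\}\cup\{e_\alpha[z]:\alpha\in L\}$ inside $(\End V)[[z,z^{-1}]]$ is pairwise mutually local in the appropriate nonlocal-Jacobi sense: hypothesis one yields locality among the $h[z]$'s, hypothesis two yields locality between each $h[z]$ and $e_\alpha[z]$, and hypothesis four is exactly the mutual Jacobi identity for $e_\alpha[z], e_\beta[z]$. Combined with the derivative relation of hypothesis three, these data determine a nonlocal vertex algebra $\langle\mathcal{S}\rangle\subset\E(V)$ equipped with a canonical evaluation-at-vacuum map $\langle\mathcal{S}\rangle\to V$ sending $\phi(z)\mapsto \phi(z)\vac|_{z=0}$. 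The universal property of $V_{(\h,L)}$ (essentially from \cite{FLM}, reformulated for nonlocal vertex algebras in \cite{LL} and \cite{JKLT-23}) then supplies a surjective morphism $V_{(\h,L)}\twoheadrightarrow\langle\mathcal{S}\rangle$ mapping $h\mapsto h[z]$, $e_\alpha\mapsto e_\alpha[z]$; composition with the evaluation map yields the desired $\tilde\psi$.

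The main obstacle is verifying that the Jacobi identity provided by hypothesis four, at the single power $-\langle\alpha,\beta\rangle-1$, together with the derivative relation, forces $Y(\psi(e_\alpha),z)$ to take the normal-ordered exponential form $E^-(-\alpha,z)E^+(-\alpha,z)\psi(e_\alpha)z^\alpha$ analogous to the one in $V_{(\h,L)}$. This is a routine but somewhat delicate generating-function computation with exponentials, essentially the argument of \cite{FLM} Chapter 8 transcribed into the nonlocal setting; once established, the remaining Jacobi identities between the $e_\alpha[z]$'s propagate automatically. Finally, if $V$ is generated by $\psi(\h+\C_\varepsilon[L])$, then $\tilde\psi$ is surjective by construction; injectivity follows from the simplicity of $V_{(\h,L)}$ recorded in the passage above, since $\ker\tilde\psi$ is a vertex-algebra ideal of $V_{(\h,L)}$ not containing $\vac$.
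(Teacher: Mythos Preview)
The paper gives no self-contained argument here---it simply cites \cite{JKLT-23}, \cite{JKLT-22}, and \cite{LL}---so there is little to compare against line by line. Your uniqueness argument via the PBW spanning set is correct, and your injectivity argument from the simplicity of $V_{(\h,L)}$ (stated just above the proposition) is exactly right.

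Your existence argument, however, has a circular step. Having built $\langle\mathcal{S}\rangle\subset\E(V)$ and the evaluation map $\langle\mathcal{S}\rangle\to V$, you then invoke ``the universal property of $V_{(\h,L)}$'' to obtain $V_{(\h,L)}\twoheadrightarrow\langle\mathcal{S}\rangle$. But that universal property \emph{is} the proposition you are proving; applying it with target $\langle\mathcal{S}\rangle$ presupposes the conclusion. The local-system detour organizes the bookkeeping nicely, but it cannot manufacture the map $V_{(\h,L)}\to\langle\mathcal{S}\rangle$ out of nothing: to get that map you must still verify the same relations inside $\langle\mathcal{S}\rangle$ and construct the morphism by hand, which is identical work to mapping directly into $V$.

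What actually produces the map is the concrete structure $V_{(\h,L)}=S(\wh{\h}^-)\otimes\C_\varepsilon[L]$: hypothesis one makes $V$ a level-one restricted $\wh{\h}$-module; hypotheses two and three force $e_\alpha[z]$ into the normal-ordered form $E^-(-\alpha,z)E^+(-\alpha,z)\psi(e_\alpha)z^{\alpha(0)}$ (the computation you rightly flag as the ``main obstacle''); and hypothesis four fixes the $\C_\varepsilon[L]$-action. One then defines $\tilde\psi$ on the PBW basis exactly as in your uniqueness paragraph and checks the morphism property directly. This is the content of the cited passages of \cite{LL} (Chapter~6) and \cite{JKLT-23}, to which the paper defers.
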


Set
\begin{align}
\h_0=\sum_{i\in I}\C \al_i\subset \h\cap [\g(A),\g(A)].
\end{align}
(Note that $L$ is a free abelian group and $\h_0=\C\ot_{\Z}L$.)
Furthermore, set
\begin{align}
  V_L=S(\wh{\h_0}^-)\ot\C_\varepsilon[L]\subset V_{(\h,L)},
\end{align}
where $\wh{\h_0}^-=\h_0\ot t\inv\C[t\inv]\subset \wh{\h}^-$.
It can be readily seen that $V_L$ is a vertex subalgebra of $V_{(\h,L)}$,
which is generated by the subspace $\h_0+\C_\varepsilon[L]$.

For $a\in \h,\ f(z)\in \C((z))[[\hbar]]$, follow \cite{EK-qva} to define
\begin{align}\label{eq:formula-Phi-pre}
\Phi(a,f)(x)=\Res_z f(x-z)Y(a,z)=\sum_{n\ge 0}\frac{(-1)^{n}}{n!}f^{(n)}(x)a_{n},
\end{align}
 where $f^{(n)}(x)=(\frac{d}{dx})^nf(x)$. A basic property of $\Phi(a,f)(x)$ is
 \begin{align}\label{Phi-pseudo-end}
 \Phi(a,f)(x) Y(v,z)=Y(v,z)\Phi(a,f)(x)+Y\!\(\Phi(a,f)(x-z)v,z\)
 \end{align}
for $v\in V_{(\h,L)}$.  On the other hand, we have
\begin{align}\label{formula-Phi}
&\Phi(a,f)(x)b=-\<a,b\>f'(x){\bf 1}\   \   \   \mbox{ for }b\in \h,\\
&\Phi(a,f)(x)e_{\be}=\<\be,a\>f(x)e_{\be}\   \   \   \mbox{ for }\be\in L.
\end{align}
Define $\Phi(G(x))$ for any $G(x)\in \h\otimes \C((x))[[\hbar]]$ in the obvious way.
Note that
\begin{align}\label{Phi-G(z)-V-L}
  \Phi(G(x))V_L\subset V_L((x))[[\hbar]]
\end{align}
as $a_nV_L\subset V_L$ for any $a\in \h,\ n\ge 0$.
On the other hand, extend the bilinear form $\<\cdot,\cdot\>$ on $\h$ to
a $\C((x))[[\hbar]]$-valued bilinear form on $\h\otimes \C((x))[[\hbar]]$. Then
\begin{align}\label{Phi-G(x)-e-beta}
\Phi(G(x))e_{\be}=\<\be,G(x)\>e_{\be}
\end{align}
for $G(x)\in \h\otimes \C((x))[[\hbar]],\ \be\in L$.

Denote by
${\rm Hom}(\h, \h\otimes \C((x))[[\hbar]])^{0}$ the space of linear maps
$\eta: \h\rightarrow \h\otimes \C((x))[[\hbar]]$ such that
$\eta_0(a,x):=\eta(a,x)|_{\hbar=0}\in \h\ot x\C[[x]]$ for $a\in \h$.

As a slight generalization of \cite[Theorem 5.9]{JKLT-22}, we have:

\begin{thm}\label{thm:qlva}
Let $\eta(\cdot,x)\in {\rm Hom}(\h, \h\otimes \C((x))[[\hbar]])^{0}$.
Then there exists an $\hbar$-adic quantum vertex algebra structure on $V_{(\h,L)}[[\hbar]]$,
 where the vertex operator map, denoted by $Y_L^\eta(\cdot,x)$, is uniquely determined by
\begin{eqnarray}
&&Y_L^{\eta}(a,x)=Y(a,x)+\Phi(\eta'(a,x)) \quad \text{ for }a\in \h,\label{Y-L-eta-1}\\
&&Y_L^{\eta}(e_{\alpha},x)=Y(e_\al,x)\exp(\Phi(\eta(\alpha,x)))\quad \text{ for }\alpha\in L.\label{Y-L-eta-2}
\end{eqnarray}
Denote this $\hbar$-adic quantum vertex algebra by $V_{(\h,L)}[[\hbar]]^\eta$.
\end{thm}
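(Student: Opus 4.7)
The plan is to adapt the argument of \cite[Theorem 5.9]{JKLT-22} to the slightly more general setting in which $L$ is allowed to be a (possibly degenerate) even sublattice of $\h$ rather than a lattice spanning $\h$. First, I would verify that formulas (\ref{Y-L-eta-1}) and (\ref{Y-L-eta-2}) define $\hbar$-adic fields on $V_{(\h,L)}[[\hbar]]$: since $\eta(\cdot,x)\equiv 0\bmod \hbar$, both $\Phi(\eta'(a,x))$ and $\exp(\Phi(\eta(\al,x)))$ lie in $\E_\hbar(V_{(\h,L)}[[\hbar]])$ by virtue of (\ref{Phi-G(z)-V-L}). Then, to extend $Y_L^\eta$ from the generating subspace $\h\oplus\C_{\varepsilon}[L]$ to all of $V_{(\h,L)}[[\hbar]]$, I would work modulo $\hbar^n$ for each $n\ge 1$: checking that the deformed generators satisfy appropriate $\hbar$-adic analogues of the three identities in Proposition \ref{Universal-property} produces a nonlocal vertex algebra structure on each $V_{(\h,L)}[[\hbar]]/\hbar^n V_{(\h,L)}[[\hbar]]$, and passing to the inverse limit yields the $\hbar$-adic nonlocal vertex algebra structure on $V_{(\h,L)}[[\hbar]]$.

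Next, I would compute the $\SY$-locality operator explicitly on generators. Using (\ref{Phi-pseudo-end}), (\ref{formula-Phi}), and (\ref{Phi-G(x)-e-beta}) together with the standard commutation relations of $Y(\cdot,x)$ in $V_{(\h,L)}$, I would commute $Y_L^\eta(u,x_1)$ past $Y_L^\eta(v,x_2)$ for $u,v\in \h\oplus\C_{\varepsilon}[L]$ and show that the resulting ``deformation error'' assembles into an exponential braiding factor depending only on $\eta$ and $\<\cdot,\cdot\>$. This determines the $\SY$-locality operator $\SY_\eta(x)$ first on generators and then on all of $V_{(\h,L)}[[\hbar]]$ via weak associativity. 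Finally, the four axioms (unitarity, the shift condition (\ref{eq:qyb-shift}), the quantum Yang--Baxter equation (\ref{eq:qyb}), and the hexagon identity (\ref{eq:qyb-hex-id})) for $\SY_\eta(x)$ reduce to functional identities in $\h\otimes\C((x))[[\hbar]]$ that follow from bilinearity and symmetry of $\<\cdot,\cdot\>$ together with the vanishing of $\eta$ modulo $\hbar$.

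The main obstacle is the bookkeeping in the $\SY$-operator computation, where the interaction between $\exp(\Phi(\eta(\al,x)))$ and $Y(e_\be,x)$ must be tracked carefully: $\Phi(\eta(\al,x))$ acts diagonally on $\C_{\varepsilon}[L]$ but as a derivation on the Heisenberg component, so one must expand the exponential and apply (\ref{Phi-pseudo-end}) repeatedly to collect the cancellations that recombine into the braiding factor. Crucially, however, this calculation uses $\h$ only as the state space for the Heisenberg algebra, and all relevant pairings factor through $\<\cdot,\cdot\>|_{L\times L}$; hence the argument of \cite{JKLT-22} carries over without essential modification from their setting $\h=\C\otimes_{\Z}L$ to the present one in which $L\subsetneq\h$ is permitted. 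The passage from $V_L$ to the larger $V_{(\h,L)}$ therefore contributes no new difficulty.
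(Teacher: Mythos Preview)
Your proposal takes a genuinely different route from the paper, and there is a gap in how you plan to extend $Y_L^\eta$ from generators to all of $V_{(\h,L)}[[\hbar]]$.

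Proposition \ref{Universal-property} is a statement about \emph{morphisms}: given a nonlocal vertex algebra $V$ that already exists and a map from $\h\oplus\C_\varepsilon[L]$ into $V$ whose images satisfy the listed (undeformed) relations, one obtains a morphism $V_{(\h,L)}\to V$. It does not produce a nonlocal vertex algebra structure from a collection of fields satisfying deformed relations. Your claim that ``checking that the deformed generators satisfy appropriate $\hbar$-adic analogues of the three identities in Proposition \ref{Universal-property} produces a nonlocal vertex algebra structure'' is therefore not justified: there is no target algebra yet, and the deformed relations are not the hypotheses of that proposition. To carry out a direct construction along these lines you would need a different general theorem (e.g., Li's construction of $\hbar$-adic nonlocal vertex algebras from quasi-compatible subsets of $\E_\hbar(W)$, together with a vacuum-like vector argument), and you would have to prove that the fields $Y_L^\eta(a,x),\,Y_L^\eta(e_\al,x)$ form such a subset and that the resulting algebra is canonically identified with $V_{(\h,L)}[[\hbar]]$ as a $\C[[\hbar]]$-module. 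None of that is automatic.

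The paper avoids this problem entirely by using the vertex bialgebra machinery of \cite{JKLT-22}. It introduces the commutative cocommutative vertex bialgebra $B_{(\h,L)}=S(\wh\h^-)\otimes\C[L]$ (a mild generalization of the $B_L$ used in \cite{JKLT-22}), and then applies the universal property of $V_{(\h,L)}$---not to construct $Y_L^\eta$ directly, but to produce a right comodule map $\rho:V_{(\h,L)}\to V_{(\h,L)}\otimes B_{(\h,L)}$. Once $V_{(\h,L)}[[\hbar]]$ is a right $B_{(\h,L)}[[\hbar]]$-comodule vertex algebra, the deformation $Y_L^\eta$ arises from the general deformation-by-vertex-bialgebra construction of \cite{JKLT-22}, which supplies both the $\hbar$-adic nonlocal vertex algebra structure and the quantum vertex algebra axioms for $\SY(x)$ in one stroke. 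In that approach the passage from non-degenerate $L$ to possibly degenerate $L\subset\h$ is localized to the single step of checking that $B_{(\h,L)}$ still has the required universal property and that $\rho$ exists; the remaining argument is then literally that of \cite{JKLT-22}. Your observation that ``the argument uses $\h$ only as the state space for the Heisenberg algebra'' is morally the right reason this generalization works, but it is the bialgebra framework---not a direct verification via Proposition \ref{Universal-property}---that turns that observation into a proof.
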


\begin{proof} In \cite[Theorem 5.9]{JKLT-22}, $L$ was assumed to be a non-degenerate even lattice
and the proof uses a vertex bialgebra $B_L$.
Here, we need a slight modification.  Set
\begin{align}
B_{(\h,L)}=S(\wh{\h}^-)\ot\C[L],
\end{align}
which is naturally a commutative associative algebra and a bialgebra.
It also admits a derivation $\partial$ which is uniquely determined by
\begin{align}
&\partial (e^{\alpha})=\alpha(-1)\otimes e^{\alpha}\quad \text{ for }\alpha\in L,\\
&\partial (u(-n))=nu(-n-1)\quad \text{ for }u\in \h,\ n\in \Z_{+}.
\end{align}
View $B_{(\h,L)}$ as a (commutative) vertex algebra with
$$Y(a,x)b=\(e^{x\partial}a\)b\quad \text{ for }a,b\in B_{(\h,L)}.$$
Then $B_{(\h,L)}$ becomes a vertex bialgebra.
As a commutative differential algebra, $B_{(\h,L)}$ has the following universal property:
For any commutative differential algebra $(K,\partial)$ and for any linear map
$\psi_0: \h\oplus \C[L]\rightarrow K$ such that $\psi_0(e^0)=1$ and
$$\partial \psi_0(e^{\alpha})=\psi_0(\alpha)\psi_0(e^{\alpha})\quad \text{ for }\alpha\in L,$$
there exists uniquely a homomorphism $\psi: B_{(\h,L)}\rightarrow K$, extending $\psi_0$.

Using the universal property of $V_{(\h,L)}$ and the arguments of \cite{JKLT-22},
we conclude that there exists a vertex-algebra homomorphism
$$\rho:\ V_{(\h,L)}\rightarrow  V_{(\h,L)}\otimes B_{(\h,L)},$$
which is uniquely determined by
\begin{align}
&\rho(e_{\alpha})=e_{\alpha}\otimes e^{\alpha}\quad \text{ for }\alpha\in L,\\
&\rho(a)=a\otimes e^0+{\bf 1}\otimes a(-1)\quad \text{ for }a\in \h.
\end{align}
Then $V_{(\h,L)}$ becomes a right $B_{(\h,L)}$-comodule vertex algebra.
Furthermore, $V_{(\h,L)}[[\hbar]]$ is a right $B_{(\h,L)}[[\hbar]]$-comodule $\hbar$-adic vertex algebra.
Then follow \cite{JKLT-22} to complete the proof.
\end{proof}

On the other hand, we also have:

\begin{prop}\label{qlva-module}
Let $\eta(\cdot,x)\in {\rm Hom}(\h, \h\otimes \C((x))[[\hbar]])^{0}$ and let $(W,Y_W)$ be any $V_{(\h,L)}$-module.
Then there exists a $V_{(\h,L)}[[\hbar]]^\eta$-module structure $Y_W^{\eta}(\cdot,x)$ on $W[[\hbar]]$, which is uniquely determined by
\begin{eqnarray}
&&Y_W^{\eta}(a,x)=Y_W(a,x)+\Phi(\eta'(a,x)) \quad \text{ for }a\in \h,\\
&&Y_W^{\eta}(e_{\alpha},x)=Y_W(e_\al,x)\exp(\Phi(\eta(\alpha,x)))\quad \text{ for }\alpha\in L.
\end{eqnarray}
\end{prop}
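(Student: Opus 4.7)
The plan is to mirror the proof of Theorem \ref{thm:qlva}, replacing the vertex-algebra structure on $V_{(\h,L)}$ throughout with the module structure on $W$. Recall that the proof of Theorem \ref{thm:qlva} proceeded by exhibiting $V_{(\h,L)}$ as a right $B_{(\h,L)}$-comodule vertex algebra via $\rho$, and then applying the $\eta$-twist construction of \cite{JKLT-22}. The entire apparatus carries over to modules essentially verbatim.

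First, for any $a \in \h$ and $f(x) \in \C((x))[[\hbar]]$, I would define $\Phi_W(a,f)(x) := \Res_z f(x-z) Y_W(a,z)$, and extend to $\Phi_W(G(x))$ for $G(x) \in \h \otimes \C((x))[[\hbar]]$ in the obvious way; these are continuous $\C[[\hbar]]$-linear operators on $W[[\hbar]]$. Since $\rho$ realizes $V_{(\h,L)}$ as a right $B_{(\h,L)}$-comodule vertex algebra, the $\hbar$-adic completion of $W \otimes B_{(\h,L)}[[\hbar]]$ acquires a $V_{(\h,L)}[[\hbar]]$-module structure through $\rho$ together with the natural $B_{(\h,L)}$-action on itself. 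Composing this with the same $\hbar$-adic character of $B_{(\h,L)}[[\hbar]]$ used to twist $Y$ into $Y_L^\eta$ in Theorem \ref{thm:qlva} yields operators $Y_W^\eta(v, x)$ on $W[[\hbar]]$ which on the generators $v \in \h \cup \C_\varepsilon[L]$ take exactly the forms in the statement.

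Next, I would verify the $V_{(\h,L)}[[\hbar]]^\eta$-module axioms. The vacuum property $Y_W^\eta(\vac, x) = 1_{W[[\hbar]]}$ is immediate from $Y_W(\vac, x) = 1_W$ together with $\eta(0, x) = 0$. The truncation condition $Y_W^\eta(v, x) \in \E_\hbar(W[[\hbar]])$ follows from the module analogue of (\ref{Phi-G(z)-V-L}) together with $Y_W(v,x)\in \E_\hbar(W[[\hbar]])$ for $v \in V_{(\h,L)}$. The main content is $\hbar$-adic weak associativity, which I would establish by rerunning the calculation of Theorem \ref{thm:qlva} on $W$. That calculation rests on three ingredients, each with an exact module counterpart: weak associativity of $Y_W$ itself, the module version of (\ref{Phi-pseudo-end}),
\begin{align*}
\Phi_W(a,f)(x) Y_W(v,z) = Y_W(v,z) \Phi_W(a,f)(x) + Y_W(\Phi(a,f)(x-z)v, z),
\end{align*}
and the scalar identities (\ref{formula-Phi}) and (\ref{Phi-G(x)-e-beta}), which are intrinsic to $V_{(\h,L)}$ and unchanged in the module setting. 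The three combine exactly as in the vertex-algebra proof.

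The main potential obstacle, as in Theorem \ref{thm:qlva}, is bookkeeping of $\hbar$-adic convergence for the formal exponentials $\exp \Phi_W(\eta(\alpha, x))$ appearing in $Y_W^\eta(e_\alpha, x)$. This is guaranteed by the hypothesis $\eta \equiv 0 \pmod \hbar$, which forces every application of $\Phi_W(\eta)$ to raise the $\hbar$-adic order, so the exponentials converge as continuous $\C[[\hbar]]$-linear operators on $W[[\hbar]]$. Uniqueness of $Y_W^\eta$ is then forced: $V_{(\h,L)}$ is generated as a vertex algebra by $\h + \C_\varepsilon[L]$, so any $V_{(\h,L)}[[\hbar]]^\eta$-module action on $W[[\hbar]]$ is determined by its values on this generating set through $\hbar$-adic weak associativity.
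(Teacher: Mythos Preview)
Your overall strategy is correct and runs parallel to the paper's, but the paper packages it more cleanly. Rather than verifying the module axioms by hand, the paper observes that $V_{(\h,L)}[[\hbar]]$ is a $B_{(\h,L)}[[\hbar]]$-module $\hbar$-adic vertex algebra, forms the smash product $V_{(\h,L)}[[\hbar]]\sharp B_{(\h,L)}[[\hbar]]$, notes that the comodule map $\rho$ of Theorem~\ref{thm:qlva} is in fact an $\hbar$-adic nonlocal-vertex-algebra homomorphism $\rho:V_{(\h,L)}[[\hbar]]^{\eta}\to V_{(\h,L)}[[\hbar]]\sharp B_{(\h,L)}[[\hbar]]$, and then invokes \cite{Li-smash} to equip $W[[\hbar]]$ with a smash-product-module structure whose restriction along $\rho$ is the desired $V_{(\h,L)}[[\hbar]]^{\eta}$-module. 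Your ``compose with the same $\hbar$-adic character'' is really the same mechanism, but the smash-product formalism absorbs the associativity bookkeeping you propose to carry out explicitly.

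One genuine slip: you write that convergence of $\exp\Phi_W(\eta(\alpha,x))$ is ``guaranteed by the hypothesis $\eta\equiv 0\pmod\hbar$.'' That is not the hypothesis here. Membership in $\Hom(\h,\h\otimes\C((x))[[\hbar]])^{0}$ only forces $\eta(a,x)|_{\hbar=0}\in\h\otimes x\C[[x]]$; it does not force $\eta$ to vanish modulo $\hbar$. The convergence argument therefore has to go through the mechanism in \cite{JKLT-22} (local nilpotence of $\Phi(a,f)$ for $f\in x\C[[x]]$ on suitable filtrations, combined with the $\hbar$-adic topology for the singular part), not the blanket claim that every application of $\Phi_W(\eta)$ raises $\hbar$-order. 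For the particular $\eta_{\mathcal Y}$ used later one does have $\eta_{\mathcal Y}\equiv 0\pmod\hbar$, but Proposition~\ref{qlva-module} is stated at the level of general $\eta$, so your convergence paragraph needs to be rewritten accordingly.
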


\begin{proof}  Just as in \cite{Li-smash}, $V_{(\h,L)}[[\hbar]]$ is naturally a $B_{(\h,L)}[[\hbar]]$-module $\hbar$-adic vertex algebra,
and then we have the smash product $\hbar$-adic nonlocal vertex algebra $V_{(\h,L)}[[\hbar]]\sharp B_{(\h,L)}[[\hbar]]$.
As in \cite{JKLT-22}, the map $\rho: V_{(\h,L)}[[\hbar]] \rightarrow  V_{(\h,L)}[[\hbar]]\wh\otimes B_{(\h,L)}[[\hbar]]$
from the proof of Theorem \ref{thm:qlva}
 is in fact an $\hbar$-adic nonlocal-vertex-algebra homomorphism
$$\rho:\ V_{(\h,L)}[[\hbar]]^{\eta}\rightarrow  V_{(\h,L)}[[\hbar]]\sharp B_{(\h,L)}[[\hbar]].$$
Then from \cite{Li-smash}, for any $V_{(\h,L)}$-module $(W,Y_W)$,
there exists a $V_{(\h,L)}[[\hbar]]\sharp B_{(\h,L)}[[\hbar]]$-module
structure $Y_W^{\eta}(\cdot,x)$ on $W[[\hbar]]$ such that
\begin{eqnarray*}
&&Y_W^{\eta}(a,x)=Y_W(a,x)+\Phi(\eta'(a,x)) \quad \text{ for }a\in \h,\\
&&Y_W^{\eta}(e_{\alpha},x)=Y_W(e_\al,x)\exp(\Phi(\eta(\alpha,x)))\quad \text{ for }\alpha\in L.
\end{eqnarray*}
Consequently, there exists a $V_{(\h,L)}[[\hbar]]^\eta$-module structure $Y_W^{\eta}(\cdot,x)$ on $W[[\hbar]]$,
satisfying the required conditions.
\end{proof}

For $f(x)\in \C((x))[[\hbar]]$, let $f(x)^{\pm}$ denote the regular/singular parts of $f(x)$.
Furthermore, for $a\in \h$, let $\eta(a,x)^{\pm}$ denote the regular/singular parts of $\eta(a,x)$.

\begin{prop}\label{lem:commutator-V-L-h-eta}
The following relations hold on $V_{(\h,L)}[[\hbar]]^{\eta}$  for $\al, \be\in L$:
\begin{align}
&[Y_{L}^\eta(\al,x_1),Y_{L}^\eta(\be,x_2)]
=\<\al,\be\>\frac{\partial}{\partial x_2}x_1\inverse\delta\!\(\frac{x_2}{x_1}\)\label{new-h-h-relations}\\
&\quad\quad \quad \quad  -\<\eta''(\al,x_1-x_2),\be\> +\<\eta''(\be,x_2-x_1),\al\>, \nonumber\\
&[Y_{L}^\eta(\al,x_1),Y_{L}^\eta(e_\be,x_2)]
=\<\be,\al\>Y_{L}^\eta(e_\be,x_2)x_1\inverse\delta\!\(\frac{x_2}{x_1}\)\label{new-h-e-beta-relations}\\
&\quad\quad \quad \quad + \(\<\eta'(\al,x_1-x_2),\be\>+ \<\eta'(\beta, x_2-x_1),\al\>\)
    Y_{L}^\eta(e_\be,x_2),\nonumber\\
&e^{-\<\eta(\al,x_1-x_2),\be\>}(x_1-x_2)^{-\<\al,\be\>-1}
Y_{L}^\eta(e_\al,x_1)Y_{L}^\eta(e_\be,x_2)\label{Yeta-ealpha=ebeta}\\
&\quad - e^{-\<\eta(\be,x_2-x_1),\al\>}
  (-x_2+x_1)^{-\<\al,\be\>-1}Y_{L}^\eta(e_\be,x_2)Y_{L}^\eta(e_\al,x_1)\nonumber\\
=\ &  \varepsilon(\al,\be)Y_{L}^\eta(e_{\al+\be},x_2)
    x_1\inverse\delta\!\(\frac{x_2}{x_1}\)\!.\nonumber
\end{align}
% \begin{align}\label{VL-eta-d-dx}
%& \frac{d}{dx}Y_{Q}^{\eta}(e_{\alpha},x)\\
%=\ &Y_Q^{\eta}(\al,x)^{+}Y_Q^{\eta}(e_{\alpha},x) +Y_Q^{\eta}(e_{\alpha},x)Y_Q^{\eta}(\al,x)^{-}
%+\<\eta'(\al,0)^{+},\al\>Y_Q^{\eta}(e_{\alpha},x),\nonumber
%\end{align}
%where $\eta'(\al,0)^{+}=\eta'(\al,x)^{+}|_{x=0}$.
\end{prop}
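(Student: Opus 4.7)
The approach is to decompose each vertex operator on the left into its ``classical'' piece and its $\Phi$-correction via (\ref{Y-L-eta-1})--(\ref{Y-L-eta-2}), bracket bilinearly, and evaluate the four resulting commutators using the pseudo-endomorphism identities (\ref{Phi-pseudo-end})--(\ref{Phi-G(x)-e-beta}). A key preliminary observation is that the modes $a_n$ for $a\in\h$ and $n\ge 0$ pairwise commute in the Heisenberg algebra, so any two $\Phi$-operators of the form $\Phi(\eta(\cdot,x))$ commute with one another.

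For (\ref{new-h-h-relations}), expanding $[Y(\al,x_1)+\Phi(\eta'(\al,x_1)),\,Y(\be,x_2)+\Phi(\eta'(\be,x_2))]$ gives four pieces: the $[Y(\al,x_1),Y(\be,x_2)]$ piece is the standard Heisenberg commutator; the two cross commutators reduce by (\ref{Phi-pseudo-end}) together with the $\h$-line of (\ref{formula-Phi}) to the scalars $-\<\eta''(\al,x_1-x_2),\be\>$ and $+\<\eta''(\be,x_2-x_1),\al\>$; and the $[\Phi,\Phi]$-piece vanishes. Relation (\ref{new-h-e-beta-relations}) is proved by the same bracket expansion: the classical commutator $[Y(\al,x_1),Y(e_\be,x_2)]$ yields $\<\be,\al\>Y(e_\be,x_2)x_1^{-1}\delta(x_2/x_1)$; the scalar commutator $[Y(\al,x_1),\Phi(\eta(\be,x_2))]=\<\eta'(\be,x_2-x_1),\al\>$ exponentiates to the conjugation of $\exp(\Phi(\eta(\be,x_2)))$ by that scalar; and the $\Phi(\eta'(\al,x_1))$ piece contributes $\<\eta'(\al,x_1-x_2),\be\>Y_L^\eta(e_\be,x_2)$ via (\ref{Phi-G(x)-e-beta}). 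Collecting the four contributions yields the claimed formula.

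The main technical step is (\ref{Yeta-ealpha=ebeta}). Starting from the scalar commutator $[\Phi(\eta(\al,x_1)),Y(e_\be,x_2)] =\<\eta(\al,x_1-x_2),\be\>Y(e_\be,x_2)$, exponentiation yields the conjugation identity
\begin{align*}
\exp(\Phi(\eta(\al,x_1)))\,Y(e_\be,x_2)=e^{\<\eta(\al,x_1-x_2),\be\>}\,Y(e_\be,x_2)\exp(\Phi(\eta(\al,x_1))).
\end{align*}
Combined with commutativity of the two $\Phi$'s, this rewrites
$Y_L^\eta(e_\al,x_1)Y_L^\eta(e_\be,x_2)=e^{\<\eta(\al,x_1-x_2),\be\>}Y(e_\al,x_1)Y(e_\be,x_2)M(x_1,x_2)$, where $M(x_1,x_2)=\exp(\Phi(\eta(\al,x_1)))\exp(\Phi(\eta(\be,x_2)))$, and likewise for the opposite product with the scalar $e^{\<\eta(\be,x_2-x_1),\al\>}$. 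Multiplying by the prescribed prefactors cancels these scalar exponentials, leaving $[(x_1-x_2)^{-\<\al,\be\>-1}Y(e_\al,x_1)Y(e_\be,x_2)-(-x_2+x_1)^{-\<\al,\be\>-1}Y(e_\be,x_2)Y(e_\al,x_1)]\,M(x_1,x_2)$. By the classical lattice Jacobi identity of $V_{(\h,L)}$ (the last relation in Proposition \ref{Universal-property}), the bracket equals $\varepsilon(\al,\be)Y(e_{\al+\be},x_2)x_1^{-1}\delta(x_2/x_1)$. Finally one applies the delta-function substitution $f(x_1)x_1^{-1}\delta(x_2/x_1)=f(x_2)x_1^{-1}\delta(x_2/x_1)$ to reduce $M(x_1,x_2)$ to $\exp(\Phi(\eta(\al,x_2)+\eta(\be,x_2)))=\exp(\Phi(\eta(\al+\be,x_2)))$, which then absorbs into $Y_L^\eta(e_{\al+\be},x_2)$. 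The only delicate point is justifying this substitution; one does so coefficient-by-coefficient in the $\hbar$-adic expansion of $M(x_1,x_2)$, each $\hbar$-coefficient being a genuine Laurent series in $x_1$ with values in $\End V_{(\h,L)}$.
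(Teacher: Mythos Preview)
Your proof is correct and follows exactly the natural line of argument: expand each $Y_L^\eta$ via the defining formulas (\ref{Y-L-eta-1})--(\ref{Y-L-eta-2}), use the pseudo-endomorphism identity (\ref{Phi-pseudo-end}) together with (\ref{formula-Phi})--(\ref{Phi-G(x)-e-beta}) to evaluate each cross term, and for (\ref{Yeta-ealpha=ebeta}) reduce to the classical lattice relation of Proposition \ref{Universal-property}. The paper states this proposition without proof (the computation is essentially that of \cite{JKLT-22}), so there is no alternative argument to compare against; your approach is the intended one.

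One small remark on your final step: the delta-function substitution $f(x_1)x_1^{-1}\delta(x_2/x_1)=f(x_2)x_1^{-1}\delta(x_2/x_1)$ in fact holds for \emph{any} formal series $f(x_1)\in W[[x_1,x_1^{-1}]]$, since the coefficient of $x_1^p x_2^q$ on either side is the single term $f_{p+q+1}$. So you do not actually need the $\hbar$-coefficients of $M(x_1,x_2)$ to be lower-truncated Laurent series in $x_1$; the substitution is automatic once both sides are applied to a vector.
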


\begin{prop}\label{classical-limit-qva}
Let $\eta(\cdot,x)\in\Hom(\h, \h\ot \C((x))[[\hbar]])^0$ and set $\eta_0(\al,x)=\eta(\al,x)|_{\hbar=0}$.
Then $V_{(\h,L)}[[\hbar]]^\eta/\hbar V_{(\h,L)}[[\hbar]]^\eta\cong V_{(\h,L)}^{\eta_0}$ as nonlocal vertex algebras.
Furthermore, if
\begin{align}
\<\eta_0(\al,x),\be\>=\<\al,\eta_0(\be,-x)\>\quad \te{ for }\al,\be\in \h,\label{zero-condition2}
\end{align}
then $V_{(\h,L)}^{\eta_{0}}\cong V_{(\h,L)}$.
\end{prop}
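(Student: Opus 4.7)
For the first assertion, note that $V_{(\h,L)}[[\hbar]]^\eta$ has underlying $\C[[\hbar]]$-module $V_{(\h,L)}[[\hbar]]$, whose reduction modulo $\hbar$ is $V_{(\h,L)}$. Since the vertex operator maps (\ref{Y-L-eta-1})--(\ref{Y-L-eta-2}) are defined through $\Phi$, which depends $\C[[\hbar]]$-linearly on $\eta$, reducing mod $\hbar$ substitutes $\eta$ by $\eta_0$, giving $Y_L^\eta \equiv Y_L^{\eta_0} \pmod{\hbar}$ on $V_{(\h,L)}$. This identifies the quotient nonlocal vertex algebra with $V_{(\h,L)}^{\eta_0}$.

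For the second assertion, the plan is to apply Proposition~\ref{Universal-property} with $V=V_{(\h,L)}^{\eta_0}$ and $\psi$ the canonical inclusion $\h+\C_{\varepsilon}[L]\hookrightarrow V_{(\h,L)}^{\eta_0}$. Specializing Proposition~\ref{lem:commutator-V-L-h-eta} to $\eta_0$, the commutator $[Y^{\eta_0}(\al,x_1), Y^{\eta_0}(\be,x_2)]$ carries the correction $-\<\eta_0''(\al,x_1-x_2),\be\>+\<\eta_0''(\be,x_2-x_1),\al\>$, which vanishes after differentiating (\ref{zero-condition2}) twice; likewise $[Y^{\eta_0}(\al,x_1),Y^{\eta_0}(e_\be,x_2)]$ acquires the correction $\<\eta_0'(\al,x_1-x_2),\be\>+\<\eta_0'(\be,x_2-x_1),\al\>$, which vanishes from one differentiation of (\ref{zero-condition2}). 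For the $e_\al,e_\be$ product relation, (\ref{zero-condition2}) yields $e^{-\<\eta_0(\al,x_1-x_2),\be\>}=e^{-\<\eta_0(\be,x_2-x_1),\al\>}$, so this common factor in (\ref{Yeta-ealpha=ebeta}) cancels from both sides; since $\eta_0(\al,0)=0$, its product with $x_1^{-1}\delta(x_2/x_1)$ on the right collapses to $x_1^{-1}\delta(x_2/x_1)$, reproducing the relation of Proposition~\ref{Universal-property}.

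The remaining relation $\frac{d}{dz}e_\al[z]=\al[z]^+e_\al[z]+e_\al[z]\al[z]^-$ is checked by direct computation: differentiating $Y^{\eta_0}(e_\al,z)=Y(e_\al,z)\exp(\Phi(\eta_0(\al,z)))$, using $\frac{d}{dx}\Phi(\eta_0(\al,x))=\Phi(\eta_0'(\al,x))$ together with the standard lattice identity $\frac{d}{dz}Y(e_\al,z)=Y(\al,z)^+Y(e_\al,z)+Y(e_\al,z)Y(\al,z)^-$, the discrepancy with the right-hand side reduces to $[\Phi(\eta_0'(\al,z)),Y^{\eta_0}(e_\al,z)]$. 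From $[b_n,Y(e_\al,z)]=\<b,\al\>z^nY(e_\al,z)$ for $b\in\h,\,n\ge 0$, this commutator equals $\<\eta_0'(\al,0),\al\>Y^{\eta_0}(e_\al,z)$; differentiating (\ref{zero-condition2}) with $\be=\al$ at $x=0$ gives $\<\eta_0'(\al,0),\al\>=0$. Proposition~\ref{Universal-property} then produces a nonlocal vertex algebra morphism $\phi: V_{(\h,L)}\to V_{(\h,L)}^{\eta_0}$, and its final clause upgrades $\phi$ to an isomorphism once we verify that $\h+\C_\varepsilon[L]$ generates $V_{(\h,L)}^{\eta_0}$ under $Y^{\eta_0}$; this follows inductively on the polynomial degree filtration of $S(\wh\h^-)\otimes\C_\varepsilon[L]$ via $Y(a,x)=Y^{\eta_0}(a,x)-\Phi(\eta_0'(a,x))$, since the $\Phi$-corrections weakly lower the $S(\wh\h^-)$-degree. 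The main obstacle is the derivative relation, where one must recognize that $[\Phi(\eta_0'(\al,z)),Y(e_\al,z)]$ depends only on the single value $\eta_0'(\al,0)$, so that the pointwise antisymmetry extracted from (\ref{zero-condition2}) at $x=0$ suffices to kill this last correction.
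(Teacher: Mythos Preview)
Your proof is correct. The paper does not supply its own argument for this proposition---it is stated as a result carried over from \cite{JKLT-22}---so there is no direct comparison to make. Your route via the universal property (Proposition~\ref{Universal-property}) is the natural one: the first assertion is immediate from the construction, and for the second you correctly specialize the relations of Proposition~\ref{lem:commutator-V-L-h-eta} to $\eta_0$ and use the symmetry condition \eqref{zero-condition2} (and its derivatives) to kill the correction terms. Two small remarks: (i) in your treatment of the $e_\al,e_\be$ relation, the phrase ``cancels from both sides'' is slightly misleading---what you actually do is multiply through by the inverse of the common invertible power series $e^{-\<\eta_0(\al,x_1-x_2),\be\>}$ and then use $(x_1-x_2)^n x_1^{-1}\delta(x_2/x_1)=0$ for $n\ge 1$ to reduce the right side; (ii) Proposition~\ref{lem:commutator-V-L-h-eta} is stated for $\al,\be\in L$, but the first two relations extend verbatim to $\al,\be\in\h$ by the same derivation, which you implicitly use. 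Neither point is a gap.
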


\begin{rem}\label{non-degenerate}
{\em Recall that $V_{(\h,L)}$ is a simple vertex algebra (see \cite{LX}). By \cite{Li-nondeg}, $V_{(\h,L)}$ is non-degenerate,
and hence $V_L$ as a vertex subalgebra of $V_{(\h,L)}$ is also non-degenerate.}
\end{rem}

The following structural results on $V_{(\h,L)}[[\hbar]]^\eta$ were also obtained therein:

\begin{prop}\label{VL-eta-S(X)-general}
Let $\eta(\cdot,x)\in {\rm Hom}(\h, \h\otimes \C((x))[[\hbar]])^{0}$.
Then $V_{(\h,L)}[[\hbar]]^{\eta}$ is non-degenerate with a uniquely determined $\SY$-locality operator $\SY(x)$.
Furthermore, the following relations hold for $a,b\in \h,\ \al,\be\in L$:
\begin{eqnarray}
&&\mathcal{S}(x)(b\otimes a)=b\ot a\ot 1+{\bf 1}\ot {\bf 1}\ot (\<\eta''(b,x),a\>-\<\eta''(a,-x),b\>),\\
&&\mathcal{S}(x)(e_{\be}\ot a)=e_\be\ot a\ot 1+e_\be \ot \vac \ot (\<\eta'(\be,x),a\>+\<\eta'(a,-x),\be\>),\\
&&\mathcal{S}(x)(e_{\beta}\otimes e_{\al})=(e_{\be}\otimes e_{\al})\ot e^{\<\beta,\eta(\al,-x)\>-\<\al,\eta(\be,x)\>},
\end{eqnarray}
and
\begin{align}
&Y_L^{\eta}(a,x)^{-}b=\(\<a,b\>x^{-2}-\<\eta''(a,x),b\>^{-}\)\vac,\label{eta-al-be-sing}\\
&Y_L^{\eta}(a,x)^{-}e_{\beta}=\(\<a,\be\>x^{-1}+\<\eta'(a,x),\be\>^{-}\)e_{\be},\label{eta-al-ebe-sing}\\
&Y_L^{\eta}(e_\al,x)e_\be =\varepsilon(\al,\be)x^{\<\al,\be\>}e^{\<\be,\eta(\al,x)\>}E^{-}(-\al,x)e_{\al+\be}.
\label{Y-eta-e-alha-e-be}
\end{align}
\end{prop}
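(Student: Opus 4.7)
The plan is to extract the formulas for $\mathcal{S}(x)$ and for the singular parts of $Y_L^\eta$ on generators by combining the explicit construction of $Y_L^\eta$ from Theorem \ref{thm:qlva} with the commutator relations of Proposition \ref{lem:commutator-V-L-h-eta}, with non-degeneracy providing the uniqueness needed to pin $\mathcal{S}(x)$ down.

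First I would establish non-degeneracy. Since $V_{(\h,L)}$ is non-degenerate by Remark \ref{non-degenerate}, Proposition \ref{classical-limit-qva} reduces the problem to showing that the classical limit $V_{(\h,L)}^{\eta_0}$ is non-degenerate. This holds because the deformed vertex operators in $V_{(\h,L)}^{\eta_0}$ differ from those of $V_{(\h,L)}$ only by the addition of pseudo-derivation terms $\Phi(\eta_0'(a,x))$ and by multiplication by invertible exponentials $\exp(\Phi(\eta_0(\al,x)))$, neither of which disturbs the leading-order injectivity needed for non-degeneracy. Having secured non-degeneracy, Proposition \ref{nondegenerate-wqva} delivers a unique $\mathcal{S}$-locality operator $\mathcal{S}(x)$, automatically unitary and satisfying the rational QYBE, so that $V_{(\h,L)}[[\hbar]]^\eta$ is an $\hbar$-adic quantum vertex algebra.

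Next I would verify the singular-part formulas (\ref{eta-al-be-sing})--(\ref{Y-eta-e-alha-e-be}) by direct substitution into the defining formulas (\ref{Y-L-eta-1})--(\ref{Y-L-eta-2}). The $\<a,b\>x^{-2}\vac$ and $\<a,\be\>x^{-1}e_\be$ contributions come from the undeformed $Y(a,x)$, while the $\eta$-dependent corrections come from (\ref{formula-Phi}): one has $\Phi(\eta'(a,x))b = -\<\eta''(a,x),b\>\vac$ and $\Phi(\eta'(a,x))e_\be = \<\eta'(a,x),\be\>e_\be$. For (\ref{Y-eta-e-alha-e-be}), one combines $\exp(\Phi(\eta(\al,x)))e_\be = e^{\<\be,\eta(\al,x)\>}e_\be$ with the standard lattice identity $Y(e_\al,x)e_\be = \varepsilon(\al,\be)x^{\<\al,\be\>}E^-(-\al,x)e_{\al+\be}$, which follows from the usual telescoping of $E^\pm$ operators and the action of $x^\al$ on $e_\be$.

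To read off $\mathcal{S}(x)$ on the three pairs of generators, I would recast each commutator identity in Proposition \ref{lem:commutator-V-L-h-eta} in the $\mathcal{S}$-commutator form of Lemma \ref{sim-commutator}. In each case, the delta-function terms on the right correspond to creation terms of the form $Y_L^\eta(Y_L^\eta(u,x_0)v,x_2)$, while the remaining $\eta$-pieces must be accounted for by $Y_L^\eta(x_2,x_1)(\mathcal{S}(x)(v \otimes u))$; matching the two sides then forces the stated formulas for $\mathcal{S}(x)(b \otimes a)$, $\mathcal{S}(x)(e_\be \otimes a)$, and $\mathcal{S}(x)(e_\be \otimes e_\al)$. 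The main obstacle is the third case: in (\ref{Yeta-ealpha=ebeta}) the $\mathcal{S}$-correction enters multiplicatively as an exponential rather than additively as a scalar, and extracting it requires careful bookkeeping of the $\iota_{x_1,x_2}$ versus $\iota_{x_2,x_1}$ expansions of $(x_1 - x_2)^{-\<\al,\be\>-1}$ against the $\eta$-exponentials. Uniqueness of $\mathcal{S}(x)$ from the non-degeneracy step is precisely what certifies that the factor multiplying $e_\be \otimes e_\al$ must be $e^{\<\be,\eta(\al,-x)\> - \<\al,\eta(\be,x)\>}$ and not some other consistent expression.
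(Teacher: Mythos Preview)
The paper does not supply its own proof of this proposition; it imports the result from \cite{JKLT-22} (see the sentence ``The following structural results on $V_{(\h,L)}[[\hbar]]^\eta$ were also obtained therein'' immediately preceding the statement). Your sketch is therefore a reconstruction rather than a parallel to anything written out in the present paper.

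Your computation of the singular-part formulas (\ref{eta-al-be-sing})--(\ref{Y-eta-e-alha-e-be}) by direct substitution into (\ref{Y-L-eta-1})--(\ref{Y-L-eta-2}) and (\ref{formula-Phi}) is correct and is exactly what the definitions give. Your strategy for the $\mathcal{S}(x)$ formulas---recast each relation of Proposition~\ref{lem:commutator-V-L-h-eta} in $\mathcal{S}$-commutator form via Lemma~\ref{sim-commutator} and let uniqueness pin down $\mathcal{S}(x)$ on generators---is also sound, and the bookkeeping you flag in the $e_\al$--$e_\be$ case is the only place requiring real care.

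The one soft spot is your non-degeneracy argument. You reduce via Proposition~\ref{classical-limit-qva} to showing $V_{(\h,L)}^{\eta_0}$ is non-degenerate, then assert this holds because the $\Phi$-terms ``do not disturb leading-order injectivity.'' That is not a proof: non-degeneracy is injectivity of the $n$-fold maps $Z_n$ for every $n$, and you have not explained why pseudo-derivation corrections preserve it when $\eta_0$ is not identically zero. In \cite{JKLT-22} the logical order is reversed: $\mathcal{S}(x)$ is produced explicitly from the vertex-bialgebra deformation data, and one checks directly that it satisfies the $\mathcal{S}$-locality, shift, and hexagon conditions, so that Theorem~\ref{thm:qlva} already delivers the full $\hbar$-adic quantum vertex algebra structure. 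Your route is viable, but to close the gap you should either prove non-degeneracy of $V_{(\h,L)}^{\eta_0}$ honestly or, more economically, note that Theorem~\ref{thm:qlva} already asserts the quantum vertex algebra structure (hence an $\mathcal{S}$-operator exists), and use your commutator-matching only to identify $\mathcal{S}(x)$ on the stated generator pairs.
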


Next, we introduce some particular maps $\eta(\cdot,x)\in {\rm Hom}(\h, \h\otimes \C((x))[[\hbar]])^{0}$.
Recall that $\h_0$ is a subspace of $\h$ with a basis $\{ \alpha_i\ |\ i\in I\}$.
Let $\lambda_i\in\h$ for $i\in I$ such that
\begin{align}
\<\lambda_i,\al_j\>=\delta_{i,j}\quad \te{for } i,j\in I.
\end{align}

\begin{rem}\label{center-C}
{\em If $A$ is of finite type, then $\h=\h_0$ and $\{\lambda_i\ |\ i\in I\}$
 (uniquely determined) is the basis of $\h$ dual to $\{ \alpha_i\ |\ i\in I\}$.
In general, $\lambda_i$ for $i\in I$ are unique modulo $\mathfrak{c}:=\{ a\in \h\ |\  \<a,\h_0\>=0\}$,
which equals the center of Lie algebra $\g(A)$. One can see that $\mathfrak{c}$ lies in the centralizer of
$V_{L}$ in vertex algebra $V_{(\h,L)}$. }
 \end{rem}

Fix {\em a choice} of $\lambda_i\in\h$ for $i\in I$.
Note that for $m\in \Z$,  $[m]_{e^{x}}e^{x}-m\in x \C[[x]]$, so
\begin{align}
\left([m]_{q^{\partial_z}}q^{\partial_z}-m\right)\log z\in \hbar z^{-1}\C[z^{-1}][[\hbar]].
\end{align}

\begin{de}
 Define a linear map $\eta_{\Y}(\cdot,z):\  \h_{0} \rightarrow \h\otimes \hbar z^{-1}\C[z^{-1}][[\hbar]]$  by
\begin{align}
\eta_\Y(\alpha_i,z)
=\sum_{j\in I}\lambda_j\otimes  \([a_{i,j}]_{q^{\partial_z}}q^{\partial_z}-a_{i,j}\)\log z\quad \text{ for }i\in I.
\end{align}
\end{de}

\begin{prop}
There exists an $\hbar$-adic weak quantum vertex algebra structure on $V_L[[\hbar]]$,
where the vertex operator map denoted by $Y_L^{\eta_\Y}(\cdot,x)$ is uniquely determined by
\begin{eqnarray}
&&Y_L^{\eta_\Y}(a,x)=Y(a,x)+\Phi(\eta_{\Y}'(a,x)) \quad \text{ for }a\in \h_0,\\
&&Y_L^{\eta_\Y}(e_{\alpha},x)=Y(e_\al,x)\exp(\Phi(\eta_\Y(\alpha,x)))\quad \text{ for }\alpha\in L.
\end{eqnarray}
Denote this $\hbar$-adic weak quantum vertex algebra by $V_L[[\hbar]]^{\eta_\Y}$.
Then $V_L[[\hbar]]^{\eta_\Y}$ is independent of the choice of $\lambda_i$ for $i\in I$ and
it is a non-degenerate $\hbar$-adic quantum vertex algebra  with
$V_L[[\hbar]]^{\eta_\Y}/\hbar V_L[[\hbar]]^{\eta_\Y}=V_L$.
\end{prop}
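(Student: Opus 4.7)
The plan is to realize $V_L[[\hbar]]^{\eta_\Y}$ as an $\hbar$-adic nonlocal vertex subalgebra of an ambient quantum vertex algebra produced by Theorem \ref{thm:qlva}. First, I would choose any extension $\eta\in \Hom(\h,\h\otimes \C((x))[[\hbar]])^0$ of $\eta_\Y$ (for instance, by picking a complement of $\h_0$ in $\h$ and declaring $\eta$ to vanish on it). Theorem \ref{thm:qlva} then produces the $\hbar$-adic quantum vertex algebra $V_{(\h,L)}[[\hbar]]^\eta$, which contains $V_L[[\hbar]]$ as a $\C[[\hbar]]$-submodule, and the formulas defining $Y_L^{\eta_\Y}(a,x)$ for $a\in\h_0$ and $Y_L^{\eta_\Y}(e_\alpha,x)$ for $\alpha\in L$ coincide with the restrictions of the corresponding vertex operators in $V_{(\h,L)}[[\hbar]]^\eta$.

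The technical heart is to show that $V_L[[\hbar]]$ is closed under the vertex operator map $Y_L^\eta(\cdot,x)$, so that the proposed structure on $V_L[[\hbar]]$ is well defined. The key mode-level observation is that every $\lambda_j(n)$ with $n\ge 0$ preserves $V_L$: one has $\lambda_j(n)e_\beta=\delta_{n,0}\<\lambda_j,\beta\>e_\beta$ with $\<\lambda_j,\beta\>\in\Z$ for $\beta\in L$, while $[\lambda_j(n),h(-k)]\in\C\vac$ for $h\in\h_0$, so pushing $\lambda_j(n)$ past Heisenberg modes of $\h_0$ only produces scalar corrections. Hence $\Phi(\eta_\Y'(a,x))$ for $a\in\h_0$ and $\exp\Phi(\eta_\Y(\alpha,x))$ for $\alpha\in L$ both preserve $V_L$, and together with $Y(a,x)V_L\subset V_L((x))$ and $Y(e_\alpha,x)V_L\subset V_L((x))$ this gives closedness for the vertex operators of the generators. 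To extend closedness to arbitrary $u\in V_L[[\hbar]]$, I would invoke the $\hbar$-adic weak associativity of $V_{(\h,L)}[[\hbar]]^\eta$ combined with the fact that $V_L$ is generated by $\h_0\cup\C_\varepsilon[L]$ as a vertex subalgebra of $V_{(\h,L)}$, and run an induction on the complexity of $u$ using the iterate formula. Once closedness is proven, $V_L[[\hbar]]^{\eta_\Y}$ is an $\hbar$-adic nonlocal vertex subalgebra of $V_{(\h,L)}[[\hbar]]^\eta$. The $\SY$-locality is inherited: Proposition \ref{VL-eta-S(X)-general} shows explicitly that $\SY(x)$ on pairs of generators from $\h_0\cup\C_\varepsilon[L]$ takes values in $V_L[[\hbar]]\wh\otimes V_L[[\hbar]]\wh\otimes\C((x))[[\hbar]]$ (the exponent $\<\beta,\eta_\Y(\alpha,-x)\>-\<\alpha,\eta_\Y(\beta,x)\>$ involves only pairings with elements of $L$, hence is well defined), and the hexagon identity then propagates this to arbitrary pairs in $V_L\wh\otimes V_L$. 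Thus $V_L[[\hbar]]^{\eta_\Y}$ is an $\hbar$-adic weak quantum vertex algebra, and the construction depends only on $\eta_\Y$ and not on the chosen extension $\eta$, since the defining formulas involve only $\eta_\Y$.

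For independence from the choice of $\lambda_j$: by Remark \ref{center-C}, $\lambda_j$ is unique modulo $\mathfrak{c}$, and any $c\in\mathfrak{c}$ satisfies $\<c,\h_0\>=0$, so in particular $\<c,\beta\>=0$ for $\beta\in L$ and $\<c,h\>=0$ for $h\in\h_0$; together these give $c(n)V_L=0$ for every $n\ge 0$, so $\Phi(c,f)$ vanishes on $V_L$ and replacing $\lambda_j$ by $\lambda_j+c_j$ leaves $Y_L^{\eta_\Y}$ unchanged. For the final claims, since $\eta_\Y\equiv 0\bmod\hbar$, Proposition \ref{classical-limit-qva} yields $V_L[[\hbar]]^{\eta_\Y}/\hbar V_L[[\hbar]]^{\eta_\Y}\cong V_L$ as nonlocal vertex algebras; because $V_L$ is non-degenerate (Remark \ref{non-degenerate}), Proposition \ref{nondegenerate-wqva} upgrades $V_L[[\hbar]]^{\eta_\Y}$ from an $\hbar$-adic weak quantum vertex algebra to a (non-degenerate) $\hbar$-adic quantum vertex algebra with a unique $\SY$-locality operator. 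The main obstacle is the closedness step: extending from single generators to arbitrary $u\in V_L[[\hbar]]$ requires ensuring that iteration of weak associativity modulo $\hbar^n$ produces, in the $\hbar$-adic limit, a genuine nonlocal vertex subalgebra structure on $V_L[[\hbar]]$ compatibly with the $\hbar$-adic topology.
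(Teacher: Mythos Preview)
Your proposal is correct and follows essentially the same route as the paper: extend $\eta_\Y$ to some $\eta$ on all of $\h$, apply Theorem~\ref{thm:qlva} to obtain $V_{(\h,L)}[[\hbar]]^\eta$, show $V_L[[\hbar]]$ is a subalgebra, inherit $\SY$-locality, and then invoke Remark~\ref{center-C}, Proposition~\ref{classical-limit-qva}, Remark~\ref{non-degenerate}, and Proposition~\ref{nondegenerate-wqva} for the remaining claims.

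The only notable difference is in the closedness step, which you flag as the ``main obstacle.'' The paper dispatches it in one line by citing \eqref{Y-L-eta-1}, \eqref{Y-L-eta-2}, and \eqref{Phi-G(z)-V-L}. The point is that \eqref{Phi-G(z)-V-L} already asserts $\Phi(G(x))V_L\subset V_L((x))[[\hbar]]$ for \emph{every} $G(x)\in\h\otimes\C((x))[[\hbar]]$ (since $a_nV_L\subset V_L$ for all $a\in\h$, $n\ge 0$), and the smash-product/comodule description of $Y_L^\eta$ from \cite{JKLT-22} (recalled in the proofs of Theorem~\ref{thm:qlva} and Proposition~\ref{qlva-module}) expresses $Y_L^\eta(u,x)$ for \emph{arbitrary} $u\in V_{(\h,L)}$ as a composite of classical vertex operators $Y(\cdot,x)$ and $\Phi$-operators. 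Since $\rho(V_L)\subset V_L\otimes B_{(\h,L)}$ and both ingredients preserve $V_L$, closedness follows at once without any induction. Your weak-associativity induction would also work, but it is more laborious than necessary; you are not missing an idea, just the observation that the construction itself already hands you closedness globally rather than only on generators. Likewise for $\SY$-locality: rather than propagating via the hexagon identity, the paper simply points to Proposition~\ref{lem:commutator-V-L-h-eta} (or the explicit $\SY$-operator from \cite{JKLT-22}), which suffices once the subalgebra is in hand.
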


\begin{proof} Notice that $\eta_\Y(\cdot,z)|_{\hbar=0}=0$.
Let $p_0: \h\rightarrow \h_0$ be any vector space projection of $\h$ onto $\h_0$,
i.e., $p_0\in \Hom (\h,h_0)$ with $p_0|_{\h_0}=1$.
Then we have a linear map
\begin{align}
\eta(\cdot,z):=\eta_\Y(\cdot,z)\circ p_0\in {\rm Hom}(\h, \h\otimes \C((z))[[\hbar]])^{0}
\end{align}
with $\eta(\cdot,z)|_{\hbar=0}=0$.
By Theorem \ref{thm:qlva} and Proposition \ref{classical-limit-qva},
we have an $\hbar$-adic quantum vertex algebra
$V_{(\h,L)}[[\hbar]]^{\eta}$ with $V_{(\h,L)}[[\hbar]]^{\eta}/\hbar V_{(\h,L)}[[\hbar]]^{\eta}\simeq V_{(\h,L)}$.
From (\ref{Y-L-eta-1}), (\ref{Y-L-eta-2}), and (\ref{Phi-G(z)-V-L}),
we see that the $\C[[\hbar]]$-submodule $V_L[[\hbar]]$ of $V_{(\h,L)}[[\hbar]]^{\eta}$
is an $\hbar$-adic nonlocal vertex subalgebra, which is denoted by $V_L[[\hbar]]^{\eta}$.
It is clear that $V_L[[\hbar]]^{\eta}$ is independent of the choice of $p_0$.
On the other hand, for $a\in \mathfrak{c}=\{ c\in \h\ |\  \<c,\h_0\>=0\}$, we have $a(n)V_L=0$ for $n\ge 0$ (as $a(n){\bf 1}=0$),
so that $\Phi(a, f(x))=0$ on $V_L[[\hbar]]$ for any $f(x)\in \C((x))[[\hbar]]$.
Then by Remark \ref{center-C}, $V_L[[\hbar]]^{\eta}$ is independent of the choice of $\lambda_i$ for $i\in I$.
Using the relations in Proposition \ref{lem:commutator-V-L-h-eta}
(or  from the explicit expression of the $\SY$-locality operator in \cite{JKLT-22}),
we see that $V_L[[\hbar]]^{\eta}$ is an $\hbar$-adic weak quantum vertex algebra.
From Proposition \ref{classical-limit-qva}, we have $V_L[[\hbar]]^{\eta}/\hbar V_L[[\hbar]]^{\eta}=V_L$,
which is non-degenerate. Therefore, $V_L[[\hbar]]^{\eta}$ is a non-degenerate $\hbar$-adic quantum vertex algebra.
\end{proof}

 Let $a\in \h_0\subset V_L$. Write
\begin{align}
Y_L^{\eta_\Y}(a,x)=\sum_{n\in \Z}a^{\Y}(n)x^{-n-1}
\end{align}
on $V_L[[\hbar]]^{\eta_\Y}$.
The following is a technical result which we shall need:

\begin{lem}\label{E-Eeta-equality}
Let $z\in \hbar\C[[\hbar]]$. For $\alpha\in L$, set
 \begin{align}
 E_{\eta_\Y}^{\pm }(\al,z)=\exp\!\(\sum_{n\ge 1}\frac{\al^{\Y}(\pm n)}{\pm n}z^{\mp n}\)\!
 \end{align}
 (cf. (\ref{E-pm-h,z})). Then
  \begin{align}\label{E-Eeta-vacuum-equality}
 E_{\eta_\Y}^{-}(\pm \al_i,z){\bf 1}=E^{-}(\pm \al_i,z){\bf 1}\quad \text{ for }i\in I.
 \end{align}
\end{lem}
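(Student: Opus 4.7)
The plan is to reduce (\ref{E-Eeta-vacuum-equality}) to the structural observation that the negative modes of the deformed current $Y_L^{\eta_\Y}(\al_i,x)$ coincide with those of the undeformed current $Y(\al_i,x)$ as operators on $V_L[[\hbar]]$. Once this is established, the operators $E_{\eta_\Y}^{-}(\pm\al_i,z)$ and $E^{-}(\pm\al_i,z)$ are built from precisely the same modes, so they agree as formal-variable operators on $V_L[[\hbar]]$, and in particular on the vacuum.

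The first step is to track the $x$-supports. By construction, $\eta_\Y(\al_i,z) \in \h \otimes \hbar z^{-1}\C[z^{-1}][[\hbar]]$, so $\eta_\Y'(\al_i,z) \in \h \otimes \hbar z^{-2}\C[z^{-1}][[\hbar]]$. The defining formula (\ref{eq:formula-Phi-pre}),
\begin{equation*}
\Phi(a,f)(x) = \sum_{n\geq 0} \frac{(-1)^n}{n!} f^{(n)}(x)\, a_n,
\end{equation*}
then shows that for $f(x) \in x^{-k}\C[x^{-1}]$ with $k \geq 2$, every derivative $f^{(n)}(x)$ lies in $x^{-k-n}\C[x^{-1}]$, whence $\Phi(a,f)(x)$ involves only powers $x^{-m}$ with $m \geq 2$. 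Applying this componentwise to $\eta_\Y'(\al_i,x)$ shows that if we write $\Phi(\eta_\Y'(\al_i,x)) = \sum_n B_n x^{-n-1}$, then $B_n = 0$ for all $n \leq 0$.

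The second step is immediate: since $Y_L^{\eta_\Y}(\al_i,x) = Y(\al_i,x) + \Phi(\eta_\Y'(\al_i,x))$, comparing coefficients of $x^{-n-1}$ yields $\al_i^{\Y}(-n) = \al_i(-n)$ as operators on $V_L[[\hbar]]$ for every $n \geq 1$. By $\C[[\hbar]]$-linearity the same identity holds with $\al_i$ replaced by $-\al_i \in \h_0$. Substituting into the exponentials gives $E_{\eta_\Y}^{-}(\pm\al_i,z) = E^{-}(\pm\al_i,z)$ as operators, and evaluating at $\vac$ produces (\ref{E-Eeta-vacuum-equality}). I do not foresee a genuine obstacle; the content of the lemma is the bookkeeping check that the $\Phi$-correction affects only non-negative (in fact, $\geq 1$) modes, which is exactly what the $z^{-1}\C[z^{-1}][[\hbar]]$ constraint on $\eta_\Y$ guarantees.
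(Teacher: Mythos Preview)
Your proof is correct and in fact sharper than the paper's: you establish the operator identity $E_{\eta_\Y}^{-}(\pm\al_i,z)=E^{-}(\pm\al_i,z)$ on all of $V_L[[\hbar]]$, not merely equality on $\vac$. The paper instead writes the exponent of $E_{\eta_\Y}^{-}$ as $A+T$ with $A$ the undeformed exponent and
\[
T=\Res_x\Bigl(\tfrac{1}{x}-\tfrac{1}{x-z}\Bigr)\Phi(\eta_\Y(\al_i,x)),
\]
then verifies $[T,A]=0$ via an explicit commutator computation, and finally uses $\Phi(\eta_\Y(\al_i,x))\vac=0$ to conclude $\exp(A+T)\vac=\exp(A)\exp(T)\vac=\exp(A)\vac$. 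Your route exploits directly the fact that $\eta_\Y(\al_i,z)\in \h\otimes \hbar z^{-1}\C[z^{-1}][[\hbar]]$ has only strictly negative powers of $z$, which forces $T=0$ outright and renders both the commutator step and the vacuum-annihilation step unnecessary. The paper's argument would be needed for a deformation $\eta$ with a nontrivial regular part; yours is the cleaner one for this particular $\eta_\Y$.
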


\begin{proof} Let $\alpha\in L$.  Recall that $Y_{L}^{\eta_\Y}(\al,x)=Y(\al,x)+\Phi(\eta_{\Y}'(\al,x))$.
We have
\begin{align}\label{Phi-bracket-formula}
\sum_{n\ge 1}\frac{\al^{\Y}(-n)}{-n}z^{n}&=\sum_{n\ge 1}\frac{\al(-n)}{-n}z^{n}+\sum_{n\ge 1} \frac{1}{-n}z^{n}\Res_x
x^{-n}\Phi(\eta_{\Y}'(\al,x))\\
&=\sum_{n\ge 1}\frac{\al(-n)}{-n}z^{n}-\sum_{n\ge 1}z^{n}\Res_x
x^{-n-1}\Phi(\eta_\Y(\al,x))\nonumber\\
&=\sum_{n\ge 1}\frac{\al(-n)}{-n}z^{n}+\Res_x\left(\frac{1}{x}-\frac{1}{x-z}\right)
\Phi(\eta_\Y(\al,x)).\nonumber
\end{align}
Note that
$$\left[\Phi(\eta_\Y(\alpha,x)),\sum_{n\ge 1}\frac{\alpha(-n)}{-n}z^n\right]=\<\eta_\Y(\alpha,x)-\eta_\Y(\alpha,x-z),\alpha\>.$$
Then
\begin{align}
&\Res_x\left(\frac{1}{x}-\frac{1}{x-z}\right)\left[\Phi(\eta_\Y(\alpha,x)),\sum_{n\ge 1}\frac{\alpha(-n)}{-n}z^n\right]\\
=\ &\Res_x\left(\frac{1}{x}-\frac{1}{x-z}\right)(\<\eta_\Y(\alpha,x)-\eta_\Y(\alpha,x-z),\alpha\>).\nonumber
\end{align}

Now, consider the case with $\alpha=\pm \alpha_i$. From the definition of $\eta_\Y(\cdot,x)$, we have
$$\<\eta_\Y(\alpha_i,x),\alpha_i\>=(q^{2\partial_x}-1)\log x\in \hbar x^{-1}\C[x^{-1}][[\hbar]],$$
which implies that $\<\eta_\Y(\alpha_i,x),\alpha_i\>-\<\eta_\Y(\alpha_i,x-z),\alpha_i\>\in  \hbar x^{-1}\C[x^{-1}][[\hbar,z]]$, so that
\begin{align}\label{commutativity-sum}
&\Res_x\left(\frac{1}{x}-\frac{1}{x-z}\right)\left[\Phi(\eta_\Y(\alpha_i,x)),\sum_{n\ge 1}\frac{\alpha_i(-n)}{-n}z^n\right]\\
=\ & \Res_x\left(\frac{1}{x}-\frac{1}{x-z}\right)\left(\<\eta_\Y(\alpha_i,x)-\eta_\Y(\alpha_i,x-z),\alpha_i\>\right)=0.\nonumber
\end{align}
Notice that $\Phi(\eta_\Y(\al_i,x)){\bf 1}=0$. Then using (\ref{Phi-bracket-formula}) and (\ref{commutativity-sum})
we get (\ref{E-Eeta-vacuum-equality}).
\end{proof}

For convenience, recall  from \cite{KL1} and \cite{JKLT-22} the following simple fact:

\begin{lem}\label{lem-sing-fact}
Let $b\in \C,\ F(x,\hbar)\in \C[[x,\hbar]]$. Then
\begin{eqnarray}
&&{\rm Sing}_x (x-b\hbar)^{-1}F(x,\hbar)=(x-b\hbar)^{-1}F(b\hbar, \hbar),\label{singular-fact}\\
&&\Res_x(x-b \hbar)^{-1}F(x,\hbar)%=\Res_xx^{-1}\delta\left(\frac{b \hbar}{x}\right)F(x,\hbar)
=F(b \hbar,\hbar).
\end{eqnarray}
\end{lem}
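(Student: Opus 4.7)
The plan is to reduce both identities to the single observation that $F(x,\hbar)$ admits a Taylor expansion about $x=b\hbar$ with remainder inside $\C[[x,\hbar]]$. Concretely, since $F(x,\hbar)\in\C[[x,\hbar]]$, I would first verify the existence of a unique $G(x,\hbar)\in\C[[x,\hbar]]$ such that
\begin{align*}
F(x,\hbar)=F(b\hbar,\hbar)+(x-b\hbar)\,G(x,\hbar).
\end{align*}
This is a routine check by writing $F=\sum_{i,j\ge 0}c_{ij}x^i\hbar^j$ and using the algebraic identity $x^i-(b\hbar)^i=(x-b\hbar)\sum_{k=0}^{i-1}x^k(b\hbar)^{i-1-k}$ termwise; the resulting double sum converges in the $(x,\hbar)$-adic topology.

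Next I would multiply through by $(x-b\hbar)^{-1}$, interpreted in $\C((x))[[\hbar]]$ via the geometric expansion
\begin{align*}
(x-b\hbar)^{-1}=\sum_{n\ge 0}b^n\hbar^n x^{-n-1},
\end{align*}
to obtain the decomposition
\begin{align*}
(x-b\hbar)^{-1}F(x,\hbar)=(x-b\hbar)^{-1}F(b\hbar,\hbar)+G(x,\hbar).
\end{align*}
Now I would simply read off the two identities: the summand $G(x,\hbar)\in\C[[x,\hbar]]$ contributes nothing to $\mathrm{Sing}_x$, while $(x-b\hbar)^{-1}F(b\hbar,\hbar)=F(b\hbar,\hbar)\sum_{n\ge 0}b^n\hbar^n x^{-n-1}$ has only negative $x$-powers and is therefore equal to its own singular part. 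This gives the first identity. For the second, taking $\mathrm{Res}_x$ picks out the coefficient of $x^{-1}$, which from the same expansion equals $F(b\hbar,\hbar)\cdot 1=F(b\hbar,\hbar)$, while $G(x,\hbar)$ again contributes zero.

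There is no genuine obstacle here; the only point requiring a little care is ensuring the Taylor-type division is well-defined in $\C[[x,\hbar]]$ (as opposed to requiring $\hbar$-adic completion alone). Since both $x$ and $\hbar$ appear symmetrically in the ring $\C[[x,\hbar]]$ and $b\hbar\in\hbar\C[[\hbar]]$ has positive filtration degree in $(x,\hbar)$, the series defining $G$ converges in the $(x,\hbar)$-adic topology, so $G\in\C[[x,\hbar]]$ as required. Once this is in place, the rest is a one-line inspection of the two expansions above.
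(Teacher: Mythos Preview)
Your argument is correct. The paper does not actually prove this lemma; it merely recalls it as a ``simple fact'' from \cite{KL1} and \cite{JKLT-22}, so there is nothing to compare against. The Taylor-division step $F(x,\hbar)=F(b\hbar,\hbar)+(x-b\hbar)G(x,\hbar)$ with $G\in\C[[x,\hbar]]$ is exactly the natural approach, and your justification that the series for $G$ converges in the $(x,\hbar)$-adic topology is sound (for each fixed pair $(k,m)$ the coefficient of $x^k\hbar^m$ receives only finitely many contributions).
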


Now, we give more explicit commutation relations for $V_L[[\hbar]]^{\eta_\Y}$.

\begin{prop}\label{lattice-main-properties}
The following relations hold on $V_L[[\hbar]]^{\eta_\Y}$ for $i,j\in I$:
\begin{align*}
&[Y_L^{\eta_\Y}(\al_i,z),Y_L^{\eta_\Y}(\al_j,w)]=[a_{i,j}]_{q^{\partial_{w}}}
       \!\( (z-w+\hbar )^{-2}-(w-z+\hbar)^{-2}\)\!,\\
&[Y_L^{\eta_\Y}(\al_i,z),Y_L^{\eta_\Y}(e_{\pm\al_j},w)]
=\pm Y_L^{\eta_\Y}(e_{\pm \al_j},w)\;[a_{i,j}]_{q^{\partial_{w}}}\!\((z-w+\hbar )^{-1}+(w-z+\hbar )^{-1}\)\!,\\
&(z-w-a_{i,j}\hbar)Y_L^{\eta_\Y}(e_{\pm \al_i},z)Y_L^{\eta_\Y}(e_{\pm \al_j},w)
=(z-w+a_{i,j}\hbar)Y_L^{\eta_\Y}(e_{\pm \al_j},w)Y_L^{\eta_\Y}(e_{\pm \al_i},z),\\
&Y_L^{\eta_\Y}(e_{\pm \al_i},z)Y_L^{\eta_\Y}(e_{\pm \al_j},w)
=%\(\frac{w-z-a_{i,j}\hbar}{w-z+a_{i,j}\hbar}\)\!
Y_L^{\eta_\Y}(e_{\pm \al_j},w)Y_L^{\eta_\Y}(e_{\pm \al_i},z)\ \ \text{ if }a_{i,j}= 0,\\
&Y_L^{\eta_\Y}(e_{\al_i},z)Y_L^{\eta_\Y}(e_{-\al_j},w)-\( \frac{w-z+a_{i,j}\hbar}{w-z-a_{i,j}\hbar}\)\!Y_L^{\eta_\Y}(e_{-\al_j},w)
Y_L^{\eta_\Y}(e_{\al_i},z)\nonumber\\
&\quad=\delta_{i,j}\varepsilon(\al_i,-\al_i)\frac{1}{2\hbar}
\(\!z\inv\delta\!\(\frac{w}{z}\)- %q^{-\partial_{w}}
   Y_L^{\eta_\Y}\(E_{\eta_\Y}^{-}(-\al_i,-2\hbar){\bf 1},w\)
    z\inv\delta\!\(\frac{w-2\hbar }{z}\)\!\)\!.
\end{align*}
\end{prop}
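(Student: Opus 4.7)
My plan is to deduce all four relations by specializing the general structural results on $V_{(\h,L)}[[\hbar]]^{\eta}$ to $\eta=\eta_\Y$ (extended from $\h_0$ to $\h$ by any projection, which is immaterial for vertex operators on $V_L[[\hbar]]$). The essential computational input is the explicit pairing
\[
\<\eta_\Y(\al_i,x),\al_j\>\;=\;\bigl([a_{i,j}]_{q^{\partial_x}}q^{\partial_x}-a_{i,j}\bigr)\log x,
\]
which, since $a_{i,j}\in\{2,0,-1\}$, evaluates in closed form to one of $\log\tfrac{x+2\hbar}{x}$, $0$, or $\log\tfrac{x}{x+\hbar}$; consequently $e^{-\<\eta_\Y(\al_i,x),\pm\al_j\>}$ is an explicit rational function of $x$ and $\hbar$ in each case.

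Relations~(1) and~(2) follow by substituting this pairing into (\ref{new-h-h-relations}) and (\ref{new-h-e-beta-relations}), differentiating once or twice, and combining with the elementary identity
\[
\iota_{z,w}(z-w+c)^{-n}-\iota_{w,z}(z-w+c)^{-n}\;=\;\tfrac{1}{(n-1)!}\partial_w^{n-1}\bigl(z^{-1}\delta((w-c)/z)\bigr)
\]
for $n=1,2$; after reorganizing in terms of the shift operator $q^{\partial_w}=e^{\hbar\partial_w}$, the answer matches $[a_{i,j}]_{q^{\partial_w}}\bigl((z-w+\hbar)^{-n}\mp(w-z+\hbar)^{-n}\bigr)$. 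For relation~(3) and the $i\ne j$ portion of~(4), I would use the $\SY$-commutator form of the $\SY$-Jacobi identity (Proposition~\ref{lem:q-Jacobi}), computing $Y_L^{\eta_\Y}(e_{\pm\al_i},x_0)e_{\pm\al_j}$ via formula (\ref{Y-eta-e-alha-e-be}) and the $\SY$-braiding $\SY(x)(e_{\pm\al_j}\otimes e_{\pm\al_i})$ from Proposition~\ref{VL-eta-S(X)-general}; multiplying the resulting identity by an appropriate linear factor in $z-w$ annihilates the residual $\delta$-function term via $(x_1-c)\,x_1^{-1}\delta(c/x_1)=0$, yielding the claimed polynomial (or braided) exchange relation.

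The main obstacle is the $i=j$ subcase of relation~(4). Here formula (\ref{Y-eta-e-alha-e-be}) simplifies to
\[
Y_L^{\eta_\Y}(e_{\al_i},x_0)e_{-\al_i}\;=\;\frac{\varepsilon(\al_i,-\al_i)}{2\hbar}\Bigl(\frac{1}{x_0}-\frac{1}{x_0+2\hbar}\Bigr)E^{-}(-\al_i,x_0){\bf 1},
\]
and taking $\Res_{x_0}x_1^{-1}\delta((x_2+x_0)/x_1)\cdot Y(\,\cdot\,,x_2)$ via the residue-via-shift identity
\[
\Res_{x_0}\tfrac{1}{x_0+c}\,x_1^{-1}\delta((x_2+x_0)/x_1)\,f(x_0)\;=\;f(-c)\,x_1^{-1}\delta((x_2-c)/x_1)
\]
yields $\tfrac{\varepsilon}{2\hbar}\bigl(x_1^{-1}\delta(x_2/x_1)-Y(E^{-}(-\al_i,-2\hbar){\bf 1},x_2)\,x_1^{-1}\delta((x_2-2\hbar)/x_1)\bigr)$. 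Combining this with $\SY(x)(e_{-\al_i}\otimes e_{\al_i})=(e_{-\al_i}\otimes e_{\al_i})\otimes\tfrac{x+2\hbar}{x-2\hbar}$ (which yields the braiding factor $\tfrac{w-z+2\hbar}{w-z-2\hbar}$ after evaluation at $x=x_2-x_1$) and Lemma~\ref{E-Eeta-equality} (which identifies $E^{-}(-\al_i,-2\hbar){\bf 1}$ with $E_{\eta_\Y}^{-}(-\al_i,-2\hbar){\bf 1}$) reproduces the stated formula. The delicate point is keeping $\iota$-expansion conventions and residue substitutions consistent throughout, and correctly computing the $\SY$-braiding from the explicit form of $\eta_\Y$.
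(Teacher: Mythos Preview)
Your approach is correct and essentially identical to the paper's own proof. Both proceed by specializing the general relations of Proposition~\ref{lem:commutator-V-L-h-eta} and Proposition~\ref{VL-eta-S(X)-general} to $\eta=\eta_\Y$, computing the pairing $\<\eta_\Y(\al_i,x),\al_j\>$ explicitly, and then invoking the $\SY$-commutator formula together with the residue identity of Lemma~\ref{lem-sing-fact} and Lemma~\ref{E-Eeta-equality} for the $i=j$ mixed-sign case. One minor remark: there are five displayed relations, not four---you have silently folded the commutativity statement for $a_{i,j}=0$ into your ``relation~(3),'' but your method handles it automatically (the $\SY$-braiding is trivial and $Y_L^{\eta_\Y}(e_{\pm\al_i},x)e_{\pm\al_j}$ is regular, so the $\SY$-commutator vanishes outright), exactly as the paper does.
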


\begin{proof} Let $i,j\in I$. Note that $\< \eta_\Y(\alpha_i,z),\al_j\>=\([a_{i,j}]_{q^{\partial_z}}q^{\partial_z}-a_{i,j}\)\log z$.
Since
\begin{align*}
\<\eta_\Y''(\al_i,z),\al_j\>=-[a_{i,j}]_{q^{\partial_z}}q^{\partial_z}z^{-2}+a_{i,j}z^{-2},
\end{align*}
$a_{i,j}=a_{j,i}$, and $[a_{i,j}]_{q^{\partial_z}}q^{\partial_z}(z-w)^{-2}=[a_{i,j}]_{q^{\partial_w}}(z-w+\hbar)^{-2}$,
by Proposition \ref{lem:commutator-V-L-h-eta} we get
\begin{align*}
&[Y_{L}^{\eta_\Y}(\al_i,z),Y_{L}^{\eta_\Y}(\al_j,w)]\\
=\ &\<\al_i,\al_j\>\frac{\partial}{\partial w}z\inverse\delta\!\(\frac{w}{z}\)
 -\<\eta_\Y''(\al_i,z-w),\al_j\> +\<\eta_\Y''(\al_j,w-z),\al_i\>\\
 =\ &a_{i,j}\frac{\partial}{\partial w}z\inverse\delta\!\(\frac{w}{z}\)+[a_{i,j}]_{q^{\partial_z}}(z-w+\hbar)^{-2}-a_{i,j}(z-w)^{-2}\\
 &\  \ -[a_{j,i}]_{q^{\partial_w}}(w-z+\hbar)^{-2}+a_{j,i}(w-z)^{-2} \\
  =\ &[a_{i,j}]_{q^{\partial_w}}\((z-w+\hbar)^{-2}-(w-z+\hbar)^{-2}\)\!.
\end{align*}
Similarly, as
\begin{align*}
&\< \eta_\Y'(\alpha_i,z-w),\al_j\>+\< \eta_\Y'(\alpha_j,w-z),\al_i\>\\
=\ &([a_{i,j}]_{q^{\partial_z}}q^{\partial_z}-a_{i,j})(z-w)^{-1}+([a_{j,i}]_{q^{\partial_w}}q^{\partial_w}-a_{j,i})(w-z)^{-1}\\
=\ &-a_{i,j}z^{-1}\delta\!\(\frac{w}{z}\)+[a_{i,j}]_{q^{\partial_w}}\((z-w+\hbar)^{-1}+(w-z+\hbar)^{-1} \)\!,
\end{align*}
we have
\begin{align*}
[Y_L^{\eta_\Y}(\al_i,z),Y_L^{\eta_\Y}(e_{\pm\al_j},w)]
=\pm Y_L^{\eta_\Y}(e_{\pm\al_j},w)\;[a_{i,j}]_{q^{\partial_w}}\!\((z-w+\hbar)^{-1}+(w-z+\hbar)^{-1} \)\!.
\end{align*}

Let $\al=\delta \al_i$ and $\beta=\delta' \al_j$ with $\delta,\delta'\in \{\pm 1\}$. Using (\ref{Y-eta-e-alha-e-be})
we get
\begin{align*}
Y_L^{\eta_\Y}(e_{\delta\al_i},x)e_{\delta_\Y'\al_j}
 =\ & \varepsilon(\delta\al_i,\delta'\al_j)x^{\delta\delta'\<\al_i,\al_j\>}e^{\delta\delta'\<\al_j,\eta(\al_i,x)\>}
 E^{-}(-\delta\al_i,x)e_{\delta\al_i+\delta'\al_j} \\
 =\ & \varepsilon(\delta\al_i,\delta'\al_j)x^{\delta\delta' a_{i,j}} e^{\delta\delta'([a_{i,j}]_{q^{\partial_x}}q^{\partial_x}-a_{i,j})\log x}
 E^{-}(-\delta\al_i,x)e_{\delta\al_i+\delta'\al_j}.\ \
\end{align*}
With $a_{i,j}\in \{0, 2, -1\}$, more explicitly we have
\begin{align*}
Y_L^{\eta_\Y}(e_{\delta\al_i},x)e_{\delta'\al_j}=\begin{cases}
\varepsilon(\delta\al_i,\delta'\al_j)E^{-}(-\delta\al_i,x)e_{\delta\al_i+\delta'\al_j} \ \ &\text{ if }a_{i,j}=0\\
\varepsilon(\delta\al_i,\delta'\al_j)(x(x+2\hbar))^{\delta\delta'}E^{-}(-\delta\al_i,x)e_{\delta\al_i+\delta'\al_j}\ \ &\text{ if }a_{i,j}=2\\
\varepsilon(\delta\al_i,\delta'\al_j) (x+\hbar)^{-\delta\delta'}
E^{-}(-\delta\al_i,x)e_{\delta\al_i+\delta'\al_j}\ \ &\text{ if }a_{i,j}=-1.
 \end{cases}
\end{align*}
On the other hand, by Proposition \ref{VL-eta-S(X)-general} we have
\begin{align*}
\mathcal{S}(x)(e_{\delta' \al_j}\otimes e_{\delta\al_i})&=(e_{\delta' \al_j}\otimes e_{\delta\al_i})
\ot e^{\delta\delta' (\<\al_j,\eta(\al_i,-x)\>- \<\al_i,\eta(\al_j,x)\>)}\\
&=(e_{\delta' \al_j}\otimes e_{\delta\al_i})\ot e^{-\delta\delta' (q^{a_{i,j}\partial_x}- q^{-a_{i,j}\partial_x})\log x}\\
&=(e_{\delta' \al_j}\otimes e_{\delta\al_i})\ot \left(\frac{x-a_{i,j}\hbar}{x+a_{i,j}\hbar}\right)^{\delta\delta'}\!.
\end{align*}

Assume $a_{i,j}= 0$. Then
\begin{align*}
&Y_L^{\eta_\Y}(e_{\pm \al_i},x)e_{\pm \al_j}\in V_L[[\hbar]]^{\eta_\Y}[[x]],\\
&\mathcal{S}(x)(e_{\pm \al_j}\otimes e_{\pm \al_i})
=e_{\pm \al_j}\otimes e_{\pm \al_i}.
%\ot \left(\frac{x-a_{i,j}\hbar}{x+a_{i,j}\hbar}\right)\!.
\end{align*}
Using these facts and the $\SY$-commutator formula we get
\begin{align}
Y_L^{\eta_\Y}(e_{\pm \al_i},z)Y_L^{\eta_\Y}(e_{\pm \al_j},w)
=%\(\frac{w-z-a_{i,j}\hbar}{w-z+a_{i,j}\hbar}\)
Y_L^{\eta_\Y}(e_{\pm \al_j},w)Y_L^{\eta_\Y}(e_{\pm \al_i},z).
\end{align}

Assume $a_{i,j}=-1$. In this case, we have
\begin{align}
&Y_L^{\eta_\Y}(e_{\pm \al_i},x)e_{\pm \al_j}=\varepsilon(\pm\al_i,\pm \al_j)(x+\hbar)^{-1}E^{-}(\mp \al_i,x)e_{\pm (\al_i+\al_j)},
\label{singular-ij-proof}\\
&\mathcal{S}(x)(e_{\pm \al_j}\otimes e_{\pm \al_i})
=(e_{\pm \al_j}\otimes e_{\pm \al_i})\ot \left(\frac{x+\hbar}{x-\hbar}\right)\!,\nonumber
\end{align}
so
\begin{align*}
&Y_L^{\eta_\Y}(e_{\pm \al_i},z)Y_L^{\eta_\Y}(e_{\pm \al_j},w)
-\left(\frac{w-z+\hbar}{w-z-\hbar}\right)\!Y_L^{\eta_\Y}(e_{\pm \al_j},w)Y_L^{\eta_\Y}(e_{\pm \al_i},z)\\
=\ &\Res_x z^{-1}\delta\!\left(\frac{w+x}{z}\right)\!Y_L^{\eta_\Y}\!\(Y_L^{\eta_\Y}(e_{\pm \al_i},x)e_{\pm \al_j},w\)\\
=\ &\Res_x \varepsilon(\pm\al_i,\pm\al_j)(x+\hbar)^{-1}z^{-1}\delta\!\left(\frac{w+x}{z}\right)\!
Y_L^{\eta_\Y}\!\(E^{-}(\mp\al_i,x)e_{\pm(\al_i+\al_j)},w\)\\
=\ &z^{-1}\delta\!\left(\frac{w-\hbar}{z}\right)\!
\varepsilon(\pm\al_i,\pm\al_j)Y_L^{\eta_\Y}\!\(E^{-}(\mp\al_i,-\hbar)e_{\pm(\al_i+\al_j)},w\)\!.
\end{align*}
Multiplying both sides by $(w-z-\hbar)$ we get the desired commutation relation
\begin{align*}
(z-w+\hbar)Y_L^{\eta_\Y}(e_{\pm \al_i},z)Y_L^{\eta_\Y}(e_{\pm \al_j},w)
=(z-w-\hbar)Y_L^{\eta_\Y}(e_{\pm \al_j},w)Y_L^{\eta_\Y}(e_{\pm \al_i},z).
\end{align*}

For the last assertion, we have
\begin{align*}
\mathcal{S}(x)(e_{\mp \al_j}\otimes e_{\pm \al_i})
=(e_{\mp \al_j}\otimes e_{\pm \al_i})\ot \left(\frac{x+a_{i,j}\hbar}{x-a_{i,j}\hbar}\right)\!.
\end{align*}
If $i\ne j$, then $a_{i,j}=0$, or $-1$. In this case, $Y_L^{\eta_\Y}(e_{\al_i},x)e_{-\al_j}$ is regular in $x$, so we get
$$Y_L^{\eta_\Y}(e_{\al_i},z)Y_L^{\eta}(e_{-\al_j},w)
-\left(\frac{w-z+a_{i,j}\hbar}{w-z-a_{i,j}\hbar}\right)Y_L^{\eta_\Y}(e_{- \al_j},w)Y_L^{\eta_\Y}(e_{\al_i},z)=0.$$
For $i=j$, we have
\begin{align*}
Y_L^{\eta_\Y}(e_{\al_i},x)e_{-\al_i}
= \frac{\varepsilon(\al_i,-\al_i)}{x(x+2\hbar)}E^{-}(-\al_i,x){\bf 1}
=\varepsilon(\al_i,-\al_i)\frac{1}{2\hbar}\!\left( \frac{1}{x}-\frac{1}{x+2\hbar}\right)\!E^{-}(-\al_i,x){\bf 1}.
\end{align*}
Then using Lemma \ref{lem-sing-fact} we obtain
\begin{align*}
&Y_L^{\eta_\Y}(e_{\al_i},z)Y_L^{\eta_\Y}(e_{-\al_i},w)
-\left(\frac{w-z+2\hbar}{w-z-2\hbar}\right)Y_L^{\eta_\Y}(e_{- \al_i},w)Y_L^{\eta_\Y}(e_{\al_i},z)\\
=\ &\Res_{x}Y_L^{\eta_\Y}(Y_L^{\eta_\Y}(e_{\al_i},x)e_{-\al_i},w)z^{-1}\delta\!\left(\frac{w+x}{z}\right)\nonumber\\
=\ &\Res_{x}\varepsilon(\al_i,-\al_i)\frac{1}{2\hbar}\left( \frac{1}{x}-\frac{1}{x+2\hbar}\right)
Y_L^{\eta_\Y}(E^{-}(-\al_i,x){\bf 1},w)z^{-1}\delta\!\left(\frac{w+x}{z}\right)\nonumber\\
=\ &\varepsilon(\al_i,-\al_i)\frac{1}{2\hbar}\left(\!z^{-1}\delta\!\left(\frac{w}{z}\right)-
Y_L^{\eta_\Y}(E^{-}(-\al_i,-2\hbar){\bf 1},w)z^{-1}\delta\!\left(\frac{w-2\hbar}{z}\right)\! \right)\!.\nonumber
\end{align*}
Note that by Lemma \ref{E-Eeta-equality} we have
$E_{\eta_\Y}^{-}(\mp\al_i,-2\hbar)\vac=E^{-}(\mp \al_i,-2\hbar)\vac$. This proves the last relation of this proposition,
completing the proof.
\end{proof}

Furthermore, we have:

\begin{prop}\label{lem:Y-eta-rel-5}
Assume $i,j\in I$ such that $a_{i,j}=\<\al_i,\al_j\>=-1$. Then
\begin{align}\label{sing-z-sing-x}
\Sing_{x}\Sing_{z}Y_L^{\eta_\Y}(e_{\pm \al_i},x)Y_L^{\eta_\Y}(e_{\pm\al_i},z)e_{\pm\al_j}=0.
\end{align}
Furthermore,  the Serre relations
for $(Y_L^{\eta_\Y}(e_{\pm \al_i},z_1),Y_L^{\eta_\Y}(e_{\pm \al_i},z_2),Y_L^{\eta_\Y}(e_{\pm \al_j},x))$ hold.
\end{prop}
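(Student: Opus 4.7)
The plan is to establish (\ref{sing-z-sing-x}) first, and then to deduce the order-$2$ Serre relation as a corollary. For the latter step, extracting the coefficient of $x^{-m-1}z^{-n-1}$ from (\ref{sing-z-sing-x}) yields $(e_{\pm\al_i})_m(e_{\pm\al_i})_n e_{\pm\al_j}=0$ for all $m,n\ge 0$. The specialization $n=0$ gives $\Sing_x Y_L^{\eta_\Y}(e_{\pm\al_i},x)\,(e_{\pm\al_i})_0 e_{\pm\al_j}=0$, which combined with the exchange relations already provided by Proposition \ref{lattice-main-properties} is precisely the hypothesis (\ref{singular-Y-a-b}) of Proposition \ref{Serre-relation-equivalence-va} with $a=e_{\pm\al_i}$ and $b=e_{\pm\al_j}$. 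Applying that proposition delivers the Serre relations.

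To prove (\ref{sing-z-sing-x}), the first step is to compute $Y_L^{\eta_\Y}(e_{\pm\al_i},z)e_{\pm\al_j}$ explicitly using formula (\ref{Y-eta-e-alha-e-be}) with $\al=\pm\al_i$ and $\be=\pm\al_j$. Using $\<\pm\al_i,\pm\al_j\>=-1$ together with the evaluation $\<\pm\al_j,\eta_\Y(\pm\al_i,z)\>=\log(z/(z+\hbar))$ (obtained from the definition of $\eta_\Y$ and the identity $[-1]_{q^{\partial_z}}q^{\partial_z}=-q^{\partial_z}$), one obtains
\[
Y_L^{\eta_\Y}(e_{\pm\al_i},z)e_{\pm\al_j}=\varepsilon(\pm\al_i,\pm\al_j)(z+\hbar)^{-1}E^{-}(\mp\al_i,z)e_{\pm(\al_i+\al_j)}.
\]
Since $E^{-}(\mp\al_i,z)e_{\pm(\al_i+\al_j)}\in V_L[[z]]$, taking the singular part in $z$ and applying Lemma \ref{lem-sing-fact} produces
\[
\Sing_z Y_L^{\eta_\Y}(e_{\pm\al_i},z)e_{\pm\al_j}=\varepsilon(\pm\al_i,\pm\al_j)(z+\hbar)^{-1}E^{-}(\mp\al_i,-\hbar)e_{\pm(\al_i+\al_j)}.
\]
Thus (\ref{sing-z-sing-x}) reduces to the one-variable statement $\Sing_x Y_L^{\eta_\Y}(e_{\pm\al_i},x)\,v=0$, where $v:=E^{-}(\mp\al_i,-\hbar)e_{\pm(\al_i+\al_j)}\in V_L[[\hbar]]$.

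To verify this, decompose $Y_L^{\eta_\Y}(e_{\pm\al_i},x)=Y(e_{\pm\al_i},x)\exp(\Phi(\eta_\Y(\pm\al_i,x)))$ and analyze each factor. For the classical piece, the standard Heisenberg braiding identity $E^+(\mp\al_i,x)E^{-}(\mp\al_i,-\hbar)=E^{-}(\mp\al_i,-\hbar)E^+(\mp\al_i,x)(1+\hbar/x)^2$, together with $\<\pm\al_i,\pm(\al_i+\al_j)\>=1$, yields a closed form for $Y(e_{\pm\al_i},x)v$ whose overall scalar prefactor is $x^{-1}(x+\hbar)^2=x+2\hbar+\hbar^2 x^{-1}$; its only singular contribution in $x$ is the $\hbar^2 x^{-1}$ term. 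For the $\eta_\Y$-correction $\exp(\Phi(\eta_\Y(\pm\al_i,x)))\,v$, the pseudo-derivation identity (\ref{Phi-pseudo-end}) is applied to push $\Phi$ past $E^{-}(\mp\al_i,-\hbar)$ and past the group-algebra generator $e_{\pm(\al_i+\al_j)}$; the resulting contributions are governed by $\<\al_i+\al_j,\eta_\Y(\al_i,x)\>=\log((x+2\hbar)/(x+\hbar))$ and the shifted pairings coming from the Heisenberg commutator. These assemble into a prefactor with a pole at $x=-\hbar$ that exactly cancels the residual $\hbar^2 x^{-1}$ singularity from the classical calculation, yielding a result regular in $x$.

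The principal obstacle is the detailed evaluation of $\exp(\Phi(\eta_\Y(\pm\al_i,x)))$ on the Heisenberg-deformed vector $v$. Because $\Phi(\eta_\Y(\pm\al_i,x))$ is a pseudo-endomorphism rather than a genuine endomorphism, each commutation past a Heisenberg creation operator generates a shift term controlled by (\ref{Phi-pseudo-end}), and these shift terms must be carefully collected. The structural design of $\eta_\Y$ (built from $[a_{i,j}]_{q^{\partial_z}}q^{\partial_z}-a_{i,j}$ acting on $\log z$) is calibrated precisely so that these shifts assemble into rational factors in $x+\hbar$ and $x+2\hbar$ matching the prefactor from the classical Heisenberg braiding, producing the required cancellation. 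As a sanity check, the reduction modulo $\hbar$ recovers the classical Serre relation in $V_L$ (standard lattice vertex algebra theory), which supports the approach and, if direct calculation proves intractable, opens the door to an induction modulo $\hbar^N$ in the $\hbar$-adic topology.
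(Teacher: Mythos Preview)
Your strategy is correct and is essentially the same as the paper's: reduce the Serre relation to the vanishing condition via Proposition~\ref{Serre-relation-equivalence-va}, compute the inner product $Y_L^{\eta_\Y}(e_{\pm\al_i},z)e_{\pm\al_j}$ using (\ref{Y-eta-e-alha-e-be}), and then show that the second application of $Y_L^{\eta_\Y}(e_{\pm\al_i},x)$ produces something regular in $x$ after taking $\Sing_z$.

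The organizational difference is minor. The paper computes the full two-variable expression $Y_L^{\eta_\Y}(e_{\pm\al_i},x)Y_L^{\eta_\Y}(e_{\pm\al_i},z)e_{\pm\al_j}$ in closed form \emph{before} taking $\Sing_z$, using a single commutation relation
\[
E^{-}(\be,z)Y_L^{\eta_\Y}(e_{\al},x)=Y_L^{\eta_\Y}(e_{\al},x)E^{-}(\be,z)\Bigl(1-\tfrac{z}{x}\Bigr)^{\<\al,\be\>}\exp\bigl(\<\eta_\Y(\al,x-z)-\eta_\Y(\al,x),\be\>\bigr)
\]
(taken from \cite{JKLT-22}), together with the direct evaluation of $Y_L^{\eta_\Y}(e_{\pm\al_i},x)e_{\pm(\al_i+\al_j)}$. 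This packages your ``classical Heisenberg braiding'' and ``$\Phi$-correction'' into one step. You instead take $\Sing_z$ first (legitimate, since $Y_L^{\eta_\Y}(e_{\pm\al_i},x)$ is $z$-independent and acts coefficientwise) and then decompose $Y_L^{\eta_\Y}=Y\cdot\exp(\Phi)$. Both routes reach the same destination.

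However, your description of the final cancellation is garbled, and as written it is wrong. Carrying out your own computation, $\exp(\Phi(\eta_\Y(\pm\al_i,x)))$ acting on $v=E^{-}(\mp\al_i,-\hbar)e_{\pm(\al_i+\al_j)}$ produces the scalar factor
\[
\frac{x+2\hbar}{x+\hbar}\cdot\frac{x(x+3\hbar)}{(x+\hbar)(x+2\hbar)}=\frac{x(x+3\hbar)}{(x+\hbar)^2},
\]
where the first piece is $e^{\<\al_i+\al_j,\eta_\Y(\al_i,x)\>}$ and the second is the Heisenberg-shift contribution $e^{\<\al_i,\eta_\Y(\al_i,x+\hbar)-\eta_\Y(\al_i,x)\>}$. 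Multiplying by the classical prefactor $x^{-1}(x+\hbar)^2$ gives $(x+3\hbar)$, which is regular. The mechanism is \emph{not} that ``a pole at $x=-\hbar$ cancels the $\hbar^2 x^{-1}$ singularity''; a pole at $x=-\hbar$ cannot cancel a pole at $x=0$. Rather, the $\Phi$-factor carries a \emph{zero} at $x=0$ (from the Heisenberg shift) that kills the simple pole of the classical piece, while its double pole at $x=-\hbar$ is absorbed by the double zero $(x+\hbar)^2$ already present classically. Your final paragraph should be replaced by this explicit computation; the ``induction modulo $\hbar^N$'' fallback is unnecessary.
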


\begin{proof} From Proposition \ref{lattice-main-properties} we have
\begin{align*}
&(z-w-2\hbar)Y_L^{\eta_\Y}(e_{\pm \al_i},z)Y_L^{\eta_\Y}(e_{\pm \al_i},w)
=(z-w+2\hbar)Y_L^{\eta_\Y}(e_{\pm \al_i},w)Y_L^{\eta_\Y}(e_{\pm \al_i},z),\\
&(z-w+\hbar)Y_L^{\eta_\Y}(e_{\pm \al_i},z)Y_L^{\eta_\Y}(e_{\pm \al_j},w)
=(z-w-\hbar)Y_L^{\eta_\Y}(e_{\pm \al_j},w)Y_L^{\eta_\Y}(e_{\pm \al_i},z).
\end{align*}
Then in view of Theorem \ref{Serre-relation-equivalence-va} it suffices to show that (\ref{sing-z-sing-x}) holds.

Recall (\ref{singular-ij-proof}):
\begin{align*}
Y_L^{\eta_\Y}(e_{\pm\al_i},z)e_{\pm\al_j}
=\varepsilon(\pm \al_i,\pm \al_j)(z+\hbar)^{-1}E^{-}(\mp \al_i,z)e_{\pm (\al_i+\al_j)}.
\end{align*}
On the other hand, using (\ref{Y-eta-e-alha-e-be}) we get
\begin{align*}
  Y_L^{\eta_\Y}(e_{\pm\al_i},x)e_{\pm(\al_i+\al_j)}
  =\varepsilon(\pm \al_i,\pm (\al_i+\al_j)) \frac{x(x+2\hbar)}{x+\hbar} E^{-}(\mp \al_i,x)e_{\pm(2\al_i+\al_j)}.
\end{align*}
From \cite{JKLT-22} (Section 6), for $\al,\be\in L$ we have
\begin{align}\label{E-Y-comm}
&E^{-}(\be,z)Y_L^{\eta_\Y}(e_{\al},x)\\
=\ &Y_L^{\eta_\Y}(e_{\al},x)E^{-}(\be,z)\(1-\frac{z}{x}\)^{\<\al,\be\>}\exp (\<\eta_\Y(\al,x-z)-\eta_\Y(\al,x),\be\>).\nonumber
\end{align}
Noticing that
$$e^{\<\al_i,\eta_\Y(\al_i,x)\>}=x^{-1}(x+2\hbar),\quad e^{\<\al_j,\eta_\Y(\al_i,x)\>}=x(x+\hbar)^{-1},$$
and then using (\ref{E-Y-comm}) we get
\begin{align*}
 Y_L^{\eta_\Y}(e_{\pm\al_i},x)E^{-}(\mp \al_i,z)
 = E^{-}(\mp \al_i,z)Y_L^{\eta_\Y}(e_{\pm\al_i},x)\frac{(x-z)(x+2\hbar-z)}{x(x+2\hbar)}.
\end{align*}
Using all of these relations, we obtain
\begin{align*}
&Y_L^{\eta_\Y}(e_{\pm\al_i},x)Y_L^{\eta_\Y}(e_{\pm\al_i},z)e_{\pm\al_j}\\
=\ &\varepsilon(\pm \al_i, \pm \al_j)(z+\hbar)^{-1}Y_L^{\eta_\Y}(e_{\pm\al_i},x)E^{-}(\mp \al_i,z)e_{\pm (\al_i+\al_j)}\\
=\ &\varepsilon(\pm \al_i,\pm \al_j)(z+\hbar)^{-1}\frac{(x-z)(x+2\hbar-z)}{x(x+2\hbar)}E^{-}(\mp \al_i,z)
Y_L^{\eta_\Y}(e_{\pm\al_i},x)e_{\pm (\al_i+\al_j)}\\
=\ &\mu \frac{(x-z)(x+2\hbar-z)}{(x+\hbar)(z+\hbar)} E^{-}(\mp \al_i,z)E^{-}(\mp \al_i,x)e_{\pm (2\al_i+\al_j)},
\end{align*}
where $\mu=\varepsilon(\pm \al_i,\pm \al_j)\varepsilon(\pm \al_i,\pm (\al_i+\al_j))$.
Then by Lemma \ref{lem-sing-fact} we get
\begin{align*}
{\rm Sing}_zY_L^{\eta_\Y}(e_{\pm\al_i},x)Y_L^{\eta_\Y}(e_{\pm\al_i},z)e_{\pm\al_j}%\\
=\mu
\!\(\!\frac{x+3\hbar}{z+\hbar}\!\)\!
E^{-}(\mp \al_i,-\hbar)E^{-}(\mp \al_i,x)e_{\pm (2\al_i+\al_j)},
\end{align*}
which is regular in $x$.  This proves that (\ref{sing-z-sing-x}) holds, concluding the proof.
\end{proof}

Now, we are in a position to present the main result of this paper.

\begin{thm}\label{V-module-DY-module}
Let $(W,Y_W)$ be any $V_L[[\hbar]]^{\eta_\Y}$-module.
Then there exists a restricted $\wh{\mathcal{DY}}(A)$-module structure on $W$  of level one,
which is uniquely determined by
\begin{align}\label{h=Y-W}
h_{i,\Y}(z)=Y_W(\al_i,z),\ x_{i,\Y}^{+}(z)=Y_W(e_{\al_i},z),\
x_{i,\Y}^{-}(z)=\varepsilon(\al_i,-\al_i)^{-1}Y_W(e_{-\al_i},z)
\end{align}
for $i\in I$.
\end{thm}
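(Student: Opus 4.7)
The plan is to apply Theorem \ref{thm-main-DY}, specialized to $\kappa=\ell=1$ (so that $[\kappa]_{q^{\partial_w}}=1$), which reduces the claim to verifying an explicit list of operator identities on $W$: the $h$-$h$ and $h$-$x^\pm$ commutation relations, the twisted $x^\pm$-$x^\pm$ commutation relations, the $x^+$-$x^-$ bracket identity featuring the series $C_i(w)$, and the order-$2$ Serre relations when $a_{i,j}=-1$. Uniqueness of the structure is then immediate from the explicit prescription (\ref{h=Y-W}) of the generators, and restrictedness is automatic because for every $v\in V_L[[\hbar]]^{\eta_\Y}$ the module axiom forces $Y_W(v,z)\in \E_\hbar(W)$, so each of its modes tends to zero $\hbar$-adically.

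I would first establish every required identity on the $\hbar$-adic weak quantum vertex algebra $V_L[[\hbar]]^{\eta_\Y}$ itself, under the identifications $h_{i,\Y}(z)\leftrightarrow Y_L^{\eta_\Y}(\al_i,z)$, $x_{i,\Y}^{+}(z)\leftrightarrow Y_L^{\eta_\Y}(e_{\al_i},z)$, $x_{i,\Y}^{-}(z)\leftrightarrow \varepsilon(\al_i,-\al_i)^{-1}Y_L^{\eta_\Y}(e_{-\al_i},z)$. All of the commutation and $\hbar$-locality identities are delivered directly by Proposition \ref{lattice-main-properties}; the normalization factor $\varepsilon(\al_j,-\al_j)^{-1}$ built into $x_{j,\Y}^-(z)$ absorbs the factor $\varepsilon(\al_i,-\al_i)$ on the right-hand side of its last relation (relevant only when $i=j$, where the two factors cancel). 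The order-$2$ Serre relations on $V_L[[\hbar]]^{\eta_\Y}$ are furnished by Proposition \ref{lem:Y-eta-rel-5}.

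The only non-formal step is matching the $C_i(w)$ term. Lemma \ref{E-Eeta-equality} gives $E_{\eta_\Y}^{-}(-\al_i,-2\hbar)\vac=E^{-}(-\al_i,-2\hbar)\vac$, and Proposition \ref{Y-W-E(a,z)} applied with $a=\al_i$ and $z=-2\hbar\in\hbar\C[[\hbar]]$ yields
\begin{align*}
Y_L^{\eta_\Y}(E^{-}(-\al_i,-2\hbar)\vac,w)
=\exp\!\(-2\hbar L(-2\hbar\partial_w)Y^{+}(\al_i,w)\)\exp\!\(-2\hbar L(-2\hbar\partial_w)Y^{-}(\al_i,w)\),
\end{align*}
which coincides with $C_i(w)$ via formula (\ref{Ci-G-L}) at $\ell=1$, under the natural identification of regular/singular parts $h_{i,\Y}^{\pm}(w)\leftrightarrow Y^{\mp}(\al_i,w)$.

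To propagate each identity from $V_L[[\hbar]]^{\eta_\Y}$ to $W$, I would invoke Proposition \ref{prop-2.25}: every relation in Proposition \ref{lattice-main-properties} can be cast in that proposition's template, with the shifted delta function $z^{-1}\delta((w-2\hbar)/z)$ expanded as $\sum_{n\ge 0}\tfrac{(-2\hbar)^n}{n!}\partial_w^n z^{-1}\delta(w/z)$, which produces a sequence of coefficient vectors converging $\hbar$-adically to zero. The Serre relations on $W$ follow from their analogues on $V_L[[\hbar]]^{\eta_\Y}$ via Proposition \ref{Serre-relation-abstract}. I expect the main technical point to be precisely the $C_i(w)$ identification, where Lemma \ref{E-Eeta-equality}, Proposition \ref{Y-W-E(a,z)}, and formula (\ref{Ci-G-L}) must be combined; all other verifications are direct specializations of the cited results to level one.
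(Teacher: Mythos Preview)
Your proposal is correct and follows essentially the same approach as the paper's proof: both reduce to Theorem \ref{thm-main-DY}, draw the commutation and Serre relations from Propositions \ref{lattice-main-properties} and \ref{lem:Y-eta-rel-5}, transfer them to $W$ via Propositions \ref{prop-2.25} and \ref{Serre-relation-abstract}, and identify the $C_i(w)$ term through Lemma \ref{E-Eeta-equality}, Proposition \ref{Y-W-E(a,z)}, and formula (\ref{Ci-G-L}). The paper additionally spells out the verification of hypothesis (\ref{a-conditions}) for Proposition \ref{Y-W-E(a,z)} by computing the commutators $[Y^{\pm}(\al_i,x_1),Y^{\pm}(\al_i,x_2)]$ explicitly, but this is a routine consequence of the first relation in Proposition \ref{lattice-main-properties} and does not represent a gap in your outline.
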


\begin{proof} We here apply Theorem \ref{thm-main-DY}.
Note that all the commutation relations on $V_L[[\hbar]]^{\eta_\Y}$ in Proposition \ref{lattice-main-properties} also hold
on $W$. On the other hand, the required Serre relations hold on $W$
by Propositions \ref{lem:Y-eta-rel-5} and \ref{Serre-relation-abstract}.
Then it remains to establish the last commutator relation on $W$.

Let $i,j\in I$. By Propositions \ref{prop-2.25} and \ref{lattice-main-properties}, we have
\begin{align*}
&Y_W(e_{\al_i},z)Y_W(e_{-\al_j},w)-\( \frac{w-z+a_{i,j}\hbar}{w-z-a_{i,j}\hbar}\)\!Y_W(e_{-\al_j},w)
Y_W(e_{\al_i},z)\nonumber\\
&=\delta_{i,j}\varepsilon(\al_i,-\al_i)\frac{1}{2\hbar}
\(\!z\inv\delta\!\(\frac{w}{z}\)-
   Y_W\!\(E_{\eta_\Y}^{-}(-\al_i,-2\hbar){\bf 1},w\)
    z\inv\delta\!\(\frac{w-2\hbar }{z}\)\!\)\!.\nonumber
 \end{align*}

Note that
\begin{align*}
[Y_L^{\eta_\Y}(\al_i,x_1), Y_L^{\eta_\Y}(\al_i,x_2)]
=(q^{\partial_{x_2}}+q^{-\partial_{x_2}})\( (x_1-x_2+\hbar)^{-2}-(x_2-x_1+\hbar)^{-2}\)\!,
\end{align*}
which implies
\begin{align*}
[(Y_L^{\eta_\Y})^{\pm}(\al_i,x_1), (Y_L^{\eta_\Y})^{\pm}(\al_i,x_2)]& =0,\\
[(Y_L^{\eta_\Y})^{-}(\al_i,x_1), (Y_L^{\eta_\Y})^{+}(\al_i,x_2)]
&=(q^{\partial_{x_2}}+q^{-\partial_{x_2}})(x_1-x_2+\hbar)^{-2}\\
& =(x_1-x_2)^{-2}+(x_1-x_2+2\hbar)^{-2}.
\end{align*}
Set $\gamma(x)=x^{-2}+(x+2\hbar)^{-2}\in x^{-2}\C[x^{-1}][[\hbar]]$. We have
$$E_{\gamma}:=z^2\Res_x x^{-1}L(z \partial_x)L(-z \partial_x)\gamma(x)=0.$$
Then using Proposition \ref{Y-W-E(a,z)} with $a=-\al_i$ we obtain
\begin{align}
&Y_W\!\(E_{\eta_\Y}^{-}(-\al_i,-2\hbar){\bf 1},x\) \nonumber\\
=\ &\exp\!\(-2\hbar L(-2\hbar\partial_x)Y_W^{+}(\al_{i},x)\)
\exp\!\(-2\hbar L(-2\hbar\partial_x)Y_W^{-}(\al_{i},x)\)\nonumber\\
=\ &\exp\!\(-G(\partial_x)q^{-\partial_x} Y_W^{+}(\alpha_{i},x)\)
\exp\!\(-G(\partial_x)q^{-\partial_x} Y_W^{-}(\alpha_{i},x)\)\!.\nonumber
\end{align}
Now it follows that $W$ is a restricted $\wh{\mathcal{DY}}(A)$-module of level one, as desired.
\end{proof}

The following is an important property of the $\hbar$-adic quantum vertex algebra $V_L[[\hbar]]^{\eta_\Y}$:

\begin{lem}\label{a-a-relation-VL}
 For $a\in \{ e_{\pm \alpha_i}\ |\ i\in I\}$,
the following relation holds on $V_L[[\hbar]]^{\eta_\Y}$:
\begin{align}\label{level-one-intg}
&(x_1-x_2)^{-1}(x_1-x_2+2\hbar)^{-1}Y_L^{\eta_\Y}(a,x_1)Y_L^{\eta_\Y}(a,x_2)\\
=\ &(x_2-x_1)^{-1}(x_2-x_1+2\hbar)^{-1}Y_L^{\eta_\Y}(a,x_2)Y_L^{\eta_\Y}(a,x_1).\nonumber
\end{align}
\end{lem}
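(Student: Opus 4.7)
My plan is to deduce (\ref{level-one-intg}) directly from the $\mathcal{S}$-locality relation recorded in Proposition \ref{lattice-main-properties}. Specializing to $i=j$ (so $a_{i,i}=2$) and $a=e_{\pm\alpha_i}$, that proposition gives
\begin{align*}
(x_1-x_2-2\hbar)Y_L^{\eta_\Y}(a,x_1)Y_L^{\eta_\Y}(a,x_2)=(x_1-x_2+2\hbar)Y_L^{\eta_\Y}(a,x_2)Y_L^{\eta_\Y}(a,x_1). \quad (\ast)
\end{align*}
Using the elementary polynomial identity $(x_2-x_1)(x_2-x_1+2\hbar)=(x_1-x_2)(x_1-x_2-2\hbar)$, multiplying (\ref{level-one-intg}) by the common polynomial $(x_1-x_2)(x_1-x_2-2\hbar)(x_1-x_2+2\hbar)$ turns it into $(x_1-x_2)\cdot(\ast)$, which is automatic. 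So the target identity is the rational reformulation of $(\ast)$ after formally dividing out by $(x_1-x_2)(x_1-x_2\pm 2\hbar)$.

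To justify that this formal division actually takes place in the $\hbar$-adic operator space, I would verify (\ref{level-one-intg}) directly on the spanning set $\{e_\beta:\beta\in L\}$ of $V_L[[\hbar]]^{\eta_\Y}$. Applying the explicit formula (\ref{Y-eta-e-alha-e-be}) for $Y_L^{\eta_\Y}(e_{\pm\alpha_i},x)e_\beta$, then using the commutation formula (\ref{E-Y-comm}) to move $E^{-}(\mp\alpha_i,x_2)$ past $Y_L^{\eta_\Y}(e_{\pm\alpha_i},x_1)$, a direct calculation gives
\begin{align*}
Y_L^{\eta_\Y}(a,x_1)Y_L^{\eta_\Y}(a,x_2)e_\beta=c_\beta\,(x_1-x_2)(x_1-x_2+2\hbar)\,G(x_1,x_2)\,e_{\pm 2\alpha_i+\beta},
\end{align*}
with $c_\beta\in\C^\times$ and $G(x_1,x_2)$ manifestly symmetric under $x_1\leftrightarrow x_2$: it is the product of the commuting operators $E^{-}(\mp\alpha_i,x_1)E^{-}(\mp\alpha_i,x_2)$ with the symmetric scalar factor built from $x_1^{\<\alpha_i,\beta\>}x_2^{\<\alpha_i,\beta\>}$ and $e^{\<\beta,\eta_\Y(\alpha_i,x_1)\>+\<\beta,\eta_\Y(\alpha_i,x_2)\>}$. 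Thus both sides of (\ref{level-one-intg}) reduce on $e_\beta$ to the common symmetric expression $c_\beta G(x_1,x_2)e_{\pm 2\alpha_i+\beta}$, as required.

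The remaining step, and the one I expect to be the main technical obstacle, is extending the identity from the vectors $e_\beta$ to all of $V_L[[\hbar]]^{\eta_\Y}$. My plan is to propagate it through the spanning set $\alpha_{j_1}(-n_1)\cdots\alpha_{j_k}(-n_k)e_\beta$ by induction on $k$, using the commutator $[Y_L^{\eta_\Y}(\alpha_j,z),Y_L^{\eta_\Y}(e_{\pm\alpha_i},w)]$ from Proposition \ref{lattice-main-properties} to push the $\alpha_j(n)$'s through $Y(a,x_1)$ and $Y(a,x_2)$ while preserving the symmetric factorization. Alternatively, one may appeal to the non-degeneracy of $V_L[[\hbar]]^{\eta_\Y}$ to transfer a rational locality relation verified on a faithful test set to the whole $\hbar$-adic quantum vertex algebra.
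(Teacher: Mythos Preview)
Your calculation on the vectors $e_\beta$ is essentially the same computation the paper does, but you stop one level too low. The paper's key observation is that this computation goes through \emph{as an operator identity}, so no vector-by-vector verification and no subsequent induction is needed. Using the factorization $Y_L^{\eta_\Y}(e_\alpha,x)=Y_L(e_\alpha,x)\exp\Phi(\eta_\Y(\alpha,x))$ together with the pseudo-derivation property \eqref{Phi-pseudo-end}, one gets
\[
Y_L^{\eta_\Y}(e_\alpha,x_1)Y_L^{\eta_\Y}(e_\beta,x_2)
= e^{\<\beta,\eta_\Y(\alpha,x_1-x_2)\>}\,Y_L(e_\alpha,x_1)Y_L(e_\beta,x_2)\,
  \exp\Phi(\eta_\Y(\alpha,x_1))\exp\Phi(\eta_\Y(\beta,x_2)),
\]
valid as operators on all of $V_L[[\hbar]]$, because $e_\beta$ is an eigenvector of $\exp\Phi(\eta_\Y(\alpha,\cdot))$. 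For $\alpha=\beta=\pm\alpha_i$ one has $e^{\<\alpha_i,\eta_\Y(\alpha_i,x)\>}=x^{-1}(x+2\hbar)$; combining this with the undeformed lattice relation $(x_1-x_2)^{-2}Y_L(a,x_1)Y_L(a,x_2)=(x_2-x_1)^{-2}Y_L(a,x_2)Y_L(a,x_1)$ and the fact that the two $\exp\Phi$ factors commute, both sides of \eqref{level-one-intg} reduce to the manifestly symmetric expression $(x_1-x_2)^{-2}Y_L(a,x_1)Y_L(a,x_2)\exp\Phi(\cdots,x_1)\exp\Phi(\cdots,x_2)$. This is your ``$G(x_1,x_2)$'' computation, but done once, for the operators.

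Your proposed extension step (induction on the number of Heisenberg creation operators) would work, since $[\alpha_j(-n),Y_L^{\eta_\Y}(a,x)]$ is a scalar multiple of $Y_L^{\eta_\Y}(a,x)$, but it is unnecessary once you realize the identity already holds at the operator level. The alternative appeal to non-degeneracy is too vague as stated: non-degeneracy controls the uniqueness of the $\mathcal{S}$-operator, not the propagation of arbitrary rational locality identities from a subset of vectors to the whole algebra.
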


\begin{proof} For $\alpha\in L$, we have $Y_L^{\eta_\Y}(e_\alpha,x)=Y_L(e_\alpha,x)\exp \Phi(\eta_\Y(\alpha,x))$,
and
$$\exp \Phi(\eta_\Y(\alpha,x))Y_L(v,z)=Y_L\!\( \exp \Phi(\eta_\Y(\alpha,x-z))v,z\)\exp \Phi(\eta_\Y(\alpha,x)) $$
 (see (\ref{formula-Phi})) for $v\in V_L[[\hbar]]$.
Then for $\alpha,\beta\in L$,
\begin{align}
&Y_L^{\eta_\Y}(e_\alpha,x_1)Y_L^{\eta_\Y}(e_\beta,x_2)\\
=\ & Y_L(e_\alpha,x_1)Y_L\!\(\exp \Phi(\eta_\Y(\alpha,x_1-x_2))e_\beta,x_2\)
\exp \Phi(\eta_\Y(\alpha,x_1))\exp \Phi(\eta_\Y(\beta,x_2)).\nonumber
\end{align}
As $\Phi(\eta_\Y(\alpha,x))e_{\beta}=\<\beta, \eta_\Y(\alpha,x)\>e_{\beta}$, we have
\begin{align*}
\(\exp (\Phi(\eta_\Y(\alpha,x))\)e_{\beta}=e^{\<\beta, \eta_\Y(\alpha,x)\>}e_{\beta}.
\end{align*}
On the other hand, we have (cf. \cite{LL}, Prop. 6.5.2)
\begin{align}
(x_1-x_2)^{-\<\alpha,\beta\>}Y_L(e_{\alpha},x_1)Y_L(e_{\beta},x_2)
=(-x_2+x_1)^{-\<\alpha,\beta\>}Y_L(e_{\beta},x_2)Y_L(e_{\alpha},x_1).
\end{align}

Noticing that $\<\eta_\Y(\alpha_i,x),\alpha_i\>=(q^{2\partial_x}-1)\log x,$ we have
$$e^{\<\eta_\Y(\alpha_i,x),\alpha_i\>}=x^{-1}(x+2\hbar).$$
Taking $a=e_{\pm \alpha_i}$ with $i\in I$, we obtain
\begin{align*}
&(x_1-x_2)^{-1}(x_1-x_2+2\hbar)^{-1}Y_L^{\eta_\Y}(a,x_1)Y_L^{\eta_\Y}(a,x_2)\\
=\ & (x_1-x_2)^{-2}Y_L(a,x_1)Y_L(a,x_2)
\exp \Phi(\eta_\Y(\pm\alpha_i,x_1))\exp \Phi(\eta_\Y(\pm\alpha_i,x_2))\nonumber\\
=\ &(x_2-x_1)^{-2} Y_L(a,x_2)Y_L(a,x_1)
\exp \Phi(\eta_\Y(\pm\alpha_i,x_2))\exp \Phi(\eta_\Y(\pm\alpha_i,x_1))\nonumber\\
=\ &(x_2-x_1)^{-1}(x_2-x_1+2\hbar)^{-1}Y_L^{\eta_\Y}(a,x_2)Y_L^{\eta_\Y}(a,x_1),\nonumber
\end{align*}
as desired.
\end{proof}

Next, we give a connection of $V_L[[\hbar]]^{\eta_\Y}$ with $\V_A(1)$.

\begin{thm}\label{VL-quotient-univ}
There exists a homomorphism of $\hbar$-adic nonlocal vertex algebras
$\psi: \V_A(1)\rightarrow V_L[[\hbar]]^{\eta_\Y}$ such that
\begin{align*}
 \psi(\check{e}_i)=e_{\alpha_i},\  \psi(\check{f}_i)=\varepsilon(\al_i,-\al_i)^{-1}e_{-\alpha_i},\  \psi(\check{h}_i)=\alpha_i\ \ \text{ for }i\in I.
 \end{align*}
Moreover, $\psi$ is onto and $\ker (\psi)$ contains the strong ideal $\mathcal{J}$ generated by relations
\begin{align}\label{VL-defining-relation}
&(x_1-x_2)^{-1}(x_1-x_2+2\hbar)^{-1}Y(a,x_1)Y(a,x_2)\\
=\ & (x_2-x_1)^{-1}(x_2-x_1+2\hbar)^{-1}Y(a,x_2)Y(a,x_1)\nonumber
\end{align}
for $a\in \{ \check{e}_i,\check{f}_i\ |\ i\in I\}$. Furthermore, if $A$ is of finite ADE type, then
$\ker (\psi)=\mathcal{J}$. In particular, we have $V_L[[\hbar]]^{\eta_\Y}\simeq \V_A(1)/\mathcal{J}$.
\end{thm}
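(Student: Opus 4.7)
The plan is to verify the four assertions of the theorem in turn, with the equality $\ker\psi=\mathcal{J}$ being the substantive content.

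First, I would construct $\psi$ from universality. By Theorem~\ref{V-module-DY-module}, $V_L[[\hbar]]^{\eta_\Y}$ is a restricted $\wh{\mathcal{DY}}(A)$-module of level one, and the vacuum axiom $Y_L^{\eta_\Y}(v,x)\vac\in V_L[[\hbar]]^{\eta_\Y}[[x]]$ gives $x_{i,\Y}^\pm(n)\vac=h_{i,\Y}(n)\vac=0$ for $n\ge 0$, so $(V_L[[\hbar]]^{\eta_\Y},\vac)$ is a vacuum $\wh{\mathcal{DY}}(A)$-module of level one. The universal property of $(\V_A(1),\vac)$ yields a unique $\wh{\mathcal{DY}}(A)$-module map $\psi$ with $\psi(\vac)=\vac$, and the creation formula $v=Y(v,x)\vac|_{x=0}$ forces the stated values on $\check e_i,\check f_i,\check h_i$. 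Via Theorem~\ref{hqva-module-main}, $\psi$ is a $\V_A(1)$-module map; the intrinsic $\V_A(1)$-module action on $V_L[[\hbar]]^{\eta_\Y}$ and the one pulled back through $\psi$ agree on the generators by (\ref{h=Y-W}), hence globally by the uniqueness in Theorem~\ref{hqva-module-main}, which upgrades $\psi$ to an $\hbar$-adic nonlocal vertex algebra homomorphism.

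Surjectivity is immediate: $V_L$ is generated by $\h_0\cup\{e_{\pm\al_i}\mid i\in I\}$ as a classical vertex algebra, hence the same subset generates $V_L[[\hbar]]^{\eta_\Y}$ as an $\hbar$-adic nonlocal vertex algebra (apply Lemma~\ref{free-top} level-wise), and all these lie in $\Image\psi$. The inclusion $\mathcal{J}\subseteq\ker\psi$ follows from Lemma~\ref{a-a-relation-VL}: relation (\ref{level-one-intg}) holds in $V_L[[\hbar]]^{\eta_\Y}$ for $a=e_{\pm\al_i}$, so $\psi$ sends the defining generators of $\mathcal{J}$ to zero, and $\ker\psi$ is a strong ideal by Lemma~\ref{kernal-strong-ideal}.

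For the equality $\ker\psi=\mathcal{J}$ when $A$ is of finite ADE type, the induced surjection $\bar\psi\colon\V_A(1)/\mathcal{J}\twoheadrightarrow V_L[[\hbar]]^{\eta_\Y}$ is between topologically free $\C[[\hbar]]$-modules (Lemma~\ref{strong-ideal}), so by Lemma~\ref{free-top} it suffices to show its classical reduction $\bar\psi\bmod\hbar$ is an isomorphism. The plan is first to identify $\V_A(1)/\hbar\V_A(1)$ with the universal level-one affine vertex algebra $\affva{1}$ of $\g=\g(A)$, which follows by reading the defining relations of Section~\ref{sec:DY} at $\hbar=0$. In $\affva{1}$ the simply-laced relations in $\wh\g$ give $[Y(e_{\al_i},x_1),Y(e_{\al_i},x_2)]=0$, so (\ref{level-one-intg}) modulo $\hbar$ reduces to
\[
\partial_{x_2}\delta(x_1,x_2)\,Y(e_{\al_i},x_1)Y(e_{\al_i},x_2)=0.
\]
Extracting the monomial $x_1^0 x_2^{-2}$ from this equation and applying the result to $\vac$ produces (up to sign) the Frenkel--Kac singular vector $(e_{\al_i})_{-1}e_{\al_i}$, so all such vectors lie in $\mathcal{J}/\hbar$. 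For $A$ of finite ADE type the strong ideal of $\affva{1}$ generated by $\{(e_{\al_i})_{-1}e_{\al_i}\mid i\in I\}$ is the unique maximal proper ideal with quotient $\saffva{1}\cong V_L$, giving $(\V_A(1)/\mathcal{J})/\hbar\cong V_L$. Then $\bar\psi\bmod\hbar$ is a surjective vertex algebra endomorphism of the simple $V_L$ fixing $\vac$, hence an isomorphism, and Lemma~\ref{free-top} lifts this to $\bar\psi$.

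The principal obstacle is the identification of $\mathcal{J}/\hbar$ with the Frenkel--Kac maximal ideal of $\affva{1}$. The two sides of (\ref{level-one-intg}) use opposite expansion conventions for $(x_1-x_2)^{-2}$, so their difference is proportional to $\partial_{x_2}\delta(x_1,x_2)$; combined with the classical commutativity of $Y(e_{\al_i},x_1)$ and $Y(e_{\al_i},x_2)$, extracting the correct monomial isolates $(e_{\al_i})_{-1}e_{\al_i}$. The finite ADE hypothesis enters essentially in invoking the Frenkel--Kac realization of $\saffva{1}$ as $V_L$ through the ideal generated by these simple-root singular vectors; once this identification is in hand, the remainder of the argument is a clean descent via Lemma~\ref{free-top}.
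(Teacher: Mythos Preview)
Your approach is essentially the same as the paper's, and correct in outline. One point deserves comment: you assert that $\V_A(1)/\hbar\V_A(1)\cong V_{\wh\g}(1,0)$, saying this ``follows by reading the defining relations of Section~\ref{sec:DY} at $\hbar=0$.'' That reading only gives a surjection $V_{\wh\g}(1,0)\twoheadrightarrow\V_A(1)/\hbar\V_A(1)$; injectivity would require a PBW-type argument which neither you nor the paper supplies. The paper sidesteps this by using only the surjection: it observes (via \cite{Garland}) that $\V_A(1)/\hbar\V_A(1)$, and hence $\mathcal{V}/\hbar\mathcal{V}$, is a level-one $\wh\g$-module generated by $\vac$, then derives both $[Y(a,x_1),Y(a,x_2)]=0$ and $Y(a,x)Y(a,x)=0$ directly from relation (\ref{level-one-intg}) modulo $\hbar$ (the first by multiplying through by $(x_1-x_2)^2$, the second from the delta-function identity you describe). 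It then cites \cite{DL} to conclude $\mathcal{V}/\hbar\mathcal{V}\cong L_{\wh\g}(1,0)$. Your argument actually survives this correction intact---once you know $\mathcal{V}/\hbar\mathcal{V}$ is \emph{some} quotient of $V_{\wh\g}(1,0)$ in which the vectors $(e_{\al_i})_{-1}e_{\al_i}$ vanish, it is a quotient of $L_{\wh\g}(1,0)$, and the surjection onto the simple $V_L$ finishes as you say. So the unproven identification is unnecessary rather than fatal; just replace it with the easy surjection and your proof matches the paper's.
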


\begin{proof} By Theorem \ref{V-module-DY-module}, $V_L[[\hbar]]^{\eta_\Y}$
is a restricted $\wh{\mathcal{DY}}(A)$-module of level one with ${\bf 1}$ as a vacuum vector.
Then it follows that there exists a $\wh{\mathcal{DY}}(A)$-module homomorphism
$\psi: \V_A(1)\rightarrow V_L[[\hbar]]^{\eta_\Y}$
with $\psi({\bf 1})={\bf 1}$. With (\ref{h=Y-W}), we have
$$\psi(Y(\check{e}_i,x)v)=Y_L^{\eta_\Y}(e_{\alpha_i},x)\psi(v),\  \
\psi(Y(\check{f}_i,x)v)=\varepsilon(\al_i,-\al_i)^{-1}Y_L^{\eta_\Y}(e_{-\alpha_i},x)\psi(v),$$
$$\psi(Y(\check{h}_i,x)v)=Y_L^{\eta_\Y}(\alpha_i,x)\psi(v)\quad \text{ for all }v\in \V_A(1).$$
This in particular implies
$$\psi(\check{e}_i)=e_{\alpha_i},\  \ \psi(\check{f}_i)=\varepsilon(\al_i,-\al_i)^{-1}e_{-\alpha_i},\  \ \psi(\check{h}_i)=\alpha_i\ \ \text{ for }i\in I.$$
As $\{ \check{e}_i,\check{f}_i,\check{h}_i\ |\ i\in I\}$ generates $\V_A(1)$
as an $\hbar$-adic nonlocal vertex algebra, it follows that
 $\psi$ is a homomorphism of $\hbar$-adic nonlocal vertex algebras.

 Since $V_L[[\hbar]]^{\eta_\Y}/\hbar V_L[[\hbar]]^{\eta_\Y}=V_L$ and since
$\{e_{\pm \alpha_i}, \alpha_i\ |\ i\in I\}$ generates $V_L$ as a vertex algebra,
the induced map from $\V_A(1)/\hbar \V_A(1)$ to
$V_L[[\hbar]]^{\eta_\Y}/\hbar V_L[[\hbar]]^{\eta_\Y}$ is onto. Then $\psi$ is onto.
From Lemma \ref{a-a-relation-VL}, we have $\mathcal{J}\subset \ker \psi$. Hence
$\psi$ reduces to a surjective
$\hbar$-adic nonlocal vertex algebra homomorphism
$\bar{\psi}: \V_A(1)/{\mathcal{J}}\rightarrow V_L[[\hbar]]^{\eta_\Y}$.

For the last assertion with $A$ of finite $ADE$ type, we now prove that $\bar{\psi}$ is also injective.
Note that $\g\colon\!=\g(A)$ is a finite-dimensional simple Lie algebra.  For convenience, set
$\mathcal{V}=\V_A(1)/{\mathcal{J}}$.
By the presentation of $\wh{\g}$ in \cite{Garland},
we see that $\V_A(1)/\hbar \V_A(1)$ is
naturally a level-one  $\widehat{\g}$-module with ${\bf 1}$ as a generator.
Then ${\mathcal{V}}/ \hbar {\mathcal{V}}$ is also a level-one  $\widehat{\g}$-module with ${\bf 1}$ as a generator.
Furthermore, relation (\ref{level-one-intg}) yields
\begin{align*}
(x_1-x_2)^{-2}Y(a,x_1)Y(a,x_2)=(x_2-x_1)^{-2}Y(a,x_2)Y(a,x_1)
\end{align*}
on $\mathcal{V}/ \hbar \mathcal{V}$ for $a\in \{ \check{e}_i, \check{f}_i\ |\ i\in I\}$, which implies $[Y(a,x_1),Y(a,x_2)]=0$ and
\begin{align}
Y(a,x)Y(a,x)=0.
\end{align}
Then we have (see \cite{DL}) $\mathcal{V}/\hbar \mathcal{V}\simeq L_{\widehat{\g}}(1,0)$, the simple affine vertex operator algebra.
It follows that the $\widehat{\g}$-module homomorphism from $\mathcal{V}/\hbar {\mathcal{V}}$ to $V_L[[\hbar]]^{\eta_\Y}/\hbar V_L[[\hbar]]^{\eta_\Y}$,
reduced from $\bar{\psi}$, is injective.  Thus $\bar{\psi}$ is injective, and hence $\bar{\psi}$ is an isomorphism.
\end{proof}

Combining Theorems \ref{V-module-DY-module} and \ref{VL-quotient-univ} we immediately have:

\begin{coro}\label{all-modules}
Assume that $A$ is of finite $ADE$ type.
Then $V_L[[\hbar]]^{\eta_\Y}$-modules are exactly those level-one restricted $\wh{\mathcal{DY}}(A)$-modules on which
the following relations hold:
\begin{align}\label{add-DY-relation}
&(x_1-x_2)^{-1}(x_1-x_2+2\hbar)^{-1}e_{i,\Y}^{\pm}(x_1)e_{i,\Y}^{\pm}(x_2)\\
=\ &(x_2-x_1)^{-1}(x_2-x_1+2\hbar)^{-1}e_{i,\Y}^{\pm}(x_2)e_{i,\Y}^{\pm}(x_1)\nonumber
\end{align}
 for $i\in I$.
\end{coro}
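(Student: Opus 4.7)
The plan is to chain together two equivalences: (i) $V_L[[\hbar]]^{\eta_\Y}$-modules are precisely $\V_A(1)$-modules whose structure map annihilates the strong ideal $\mathcal{J}$; and (ii) $\V_A(1)$-modules correspond bijectively to restricted $\wh{\mathcal{DY}}(A)$-modules of level one. Composing these two equivalences and translating the defining relations of $\mathcal{J}$ through the correspondence will directly yield the asserted characterization.

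For (i), I would invoke Theorem \ref{VL-quotient-univ}: since $A$ is of finite $ADE$ type, we have $V_L[[\hbar]]^{\eta_\Y}\cong \V_A(1)/\mathcal{J}$, with $\mathcal{J}$ the strong ideal generated by the relations (\ref{VL-defining-relation}) for $a\in\{\check{e}_i,\check{f}_i\ |\ i\in I\}$. By Lemma \ref{hva-quotient-module}, a $(\V_A(1)/\mathcal{J})$-module is precisely a $\V_A(1)$-module $(W,Y_W)$ on which $Y_W$ kills $\mathcal{J}$; since $\mathcal{J}$ is generated by the stated relations, this annihilation holds if and only if the same relations hold on $W$ with $Y$ replaced by $Y_W$.

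For (ii), I would apply Theorem \ref{hqva-module-main}, which provides the bijection under the identifications $Y_W(\check{e}_i,z)\leftrightarrow x_{i,\Y}^{+}(z)$, $Y_W(\check{f}_i,z)\leftrightarrow x_{i,\Y}^{-}(z)$ and $Y_W(\check{h}_i,z)\leftrightarrow h_{i,\Y}(z)$. Substituting these identifications into (\ref{VL-defining-relation}) for $a=\check{e}_i$ (respectively $a=\check{f}_i$) produces exactly the relation (\ref{add-DY-relation}) for $x_{i,\Y}^{+}$ (respectively $x_{i,\Y}^{-}$), establishing the corollary.

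The argument is essentially bookkeeping, and I expect no genuine obstacle. The only mild technical point is verifying that "the vertex-operator relations generating $\mathcal{J}$ hold on $W$" is equivalent to "$Y_W$ annihilates the entire strong ideal $\mathcal{J}$" — but this is a standard transfer principle in this framework, underwritten by Lemma \ref{sim-commutator}, Proposition \ref{prop-2.25} and the fact that the kernel of a module structure map is automatically a strong submodule, so relations inherited from the algebra propagate to any module.
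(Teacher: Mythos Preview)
Your proposal is correct and follows essentially the same approach as the paper, which simply states that the corollary follows immediately from Theorems \ref{V-module-DY-module} and \ref{VL-quotient-univ}. Your use of Theorem \ref{hqva-module-main} in place of Theorem \ref{V-module-DY-module} is in fact the more precise citation for the bijection step, and your explicit unpacking of why annihilation of $\mathcal{J}$ is equivalent to the relations holding on $W$ supplies the bookkeeping the paper leaves implicit.
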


%\begin{proof} Let $(W,Y_W)$ be any $V_L[[\hbar]]^{\eta}$-module.
%By Theorem \ref{V-module-DY-module}, $W$ is a $\wh{\mathcal{DY}}_{\hbar}(\g)$-module of level one with
%$$e_{i,Y}^{\pm}(x)=Y_W(e_{\pm \alpha_i},x),\ \ h_{i,Y}(x)=Y_W(\alpha_i,x)\ \ \text{ for }i\in I. $$
%Furthermore, by Lemma \ref{a-a-relation-VL},  the relation (\ref{level-one-intg}) holds on $V_L[[\hbar]]^{\eta}$.
%Then by Proposition \ref{p-va-module-relation} we have
%\begin{align}
%&(x_1-x_2)^{-1}(x_1-x_2+2\hbar)^{-1}Y_W(e_{\pm\alpha_i},x_1)Y_W(e_{\pm\alpha_i},x_2)\\
%=\ &(x_2-x_1)^{-1}(x_2-x_1+2\hbar)^{-1}Y_W(e_{\pm\alpha_i},x_2)Y_W(e_{\pm\alpha_i},x_1).\nonumber
%\end{align}
%This confirms one direction. For the other direction,
%every $\wh{\mathcal{DY}}_{\hbar}(\g)$-module $W$ of level one is naturally a
%$V_{\wh{\mathcal{DY}}_{\hbar}(\g)}(1)$-module. Furthermore, if (\ref{add-DY-relation}) holds on $W$,
%then it follows from Proposition \ref{VL-quotient-univ} that $W$ is a $V_L[[\hbar]]^{\eta}$-module.
%\end{proof}

\end{document}